\numberwithin{equation}{section} \setlength{\textwidth}{6.3in}
\def\rightharpoonupfill@{\arrowfill@\relbar\relbar\rightharpoonup}
\newcommand{\xrightharpoonup}[2][]{\ext@arrow
0359\rightharpoonupfill@{#1}{#2}} \makeatother
\def\e{{\varepsilon}}
\def\f{{\varphi}}
\newtheorem{theorem}{Theorem}[section]
\newtheorem{lemma}[theorem]{Lemma}
\newtheorem{proposition}[theorem]{Proposition}
\newtheorem{corollary}[theorem]{Corollary}
\newtheorem{remark}[theorem]{Remark}
\newtheorem{definition}[theorem]{Definition}
\newtheorem{example}[theorem]{Example}
\newtheorem{notation}[theorem]{Notation}
\newcommand{{\rr}}{{\mathbb R}}
\newenvironment{@abssec}[1]{%
     \if@twocolumn
       \section*{#1}%
     \else
       \vspace{.05in}\footnotesize
       \parindent .2in
         {\upshape\bfseries #1. }\ignorespaces
     \fi}
     {\if@twocolumn\else\par\vspace{.1in}\fi}
\def\supess{\mathop{\rm ess\: sup }}
\begin{document}

\title{Existence of Minimizers for Non-Level Convex Supremal Functionals}
\author{Ana Margarida Ribeiro\thanks{
Centro de  Matem\'{a}tica e Aplica\c{c}\~{o}es (CMA) and Departamento de Matem\'{a}tica, FCT, UNL
Quinta da Torre, 2829-516 Caparica, Portugal.
E-mail: amfr@fct.unl.pt, } \;\;
Elvira Zappale \thanks{Dipartimento di Ingegneria Industriale, Universit\'{a}
degli Studi di Salerno, Via Giovanni Paolo II, 132, 84084 Fisciano (SA) Italy.
E-mail:ezappale@unisa.it}}

\maketitle

\begin{abstract}
The paper is devoted to  determine necessary and sufficient conditions for existence of solutions to the problem
$\displaystyle \inf\left\{ \operatorname*{ess\,sup}_{x\in\Omega} f(\nabla u(x)): u \in u_0 + W^{1,\infty}_0(\Omega)\right\}$, when the supremand $f$ is not necessarily level convex. These conditions are obtained through a comparison with the related level convex problem and are written in terms of a differential inclusion involving the boundary datum. Several conditions of convexity for the supremand $f$ are also investigated.\medskip

\begin{center}{\bf R\'{e}sum\'{e}}\end{center}

Dans cet article on \'{e}tudie des conditions n\'{e}cessaires et suffisantes pour l'existence de solutions pour le probl\`{e}me de minimisation $\displaystyle \inf\left\{ \operatorname*{ess\,sup}_{x\in\Omega} f(\nabla u(x)): u \in u_0 + W^{1,\infty}_0(\Omega)\right\}$ lorsque la fonction $f$ n'est pas une fonction convexe par niveaux. La strat\'{e}gie utilis\'{e}e pour obtenir ces conditions est celle de comparer ce probl\`{e}me avec son probl\`{e}me relax\'{e}. On obtient comme condition n\'{e}cessaire et suffisante une inclusion diff\'{e}rentielle sur la donn\'{e}e au bord. On \'{e}tudie aussi plusieurs conditions de convexit\'{e}.

\medskip

\noindent\textbf{Keywords}: Supremal functionals, differential inclusions, convexity.

\noindent\textbf{MSC2010 classification}:  49K21, 49J45, 26B25, 46N10.
\end{abstract}

\section{Introduction}

The direct method of the calculus of variations requires some lower semicontinuity of the functional to minimize, which, in general, is related to some notion of convexity. In the lack of this `convexity', the usual procedure is to consider the relaxed problem, related to the original one, obtained by `convexification' of the `non-convex' function. This leads in many problems to an understanding of the minimizing sequences and of the infimum to the original problem, but it doesn't ensure the problem has a solution.

In this paper we will investigate necessary and sufficient conditions for existence of solutions to
$$
(P)\qquad\inf\left\{ \operatorname*{ess\,sup}_{x\in\Omega}f\left(  \nabla u\left(
x\right)  \right):\ u\in u_0+W_{0}^{1,\infty}(\Omega)\right\},
$$
when $f$ lacks of the appropriate convexity notion. We restrict our attention to the so-called scalar case, that is $u$ is a scalar function, $u:\Omega\subset\mathbb{R}^n\longrightarrow\mathbb{R}$, $n\ge 1$, $u_0\in W^{1,\infty}(\Omega)$. It is also possible to have $u_0$ Lipschitz only defined on $\partial \Omega$, $u_0:\partial\Omega\longrightarrow\mathbb{R}$. In this case, $u_0$ shall be extended to $\Omega$ as a Lipschitz function and the study of problem $(P)$ can be done according to the choice of the Lipschitz extension.

Functionals in the $L^\infty$ form, as above, provide a realistic setting to many physical problems in a variety of contexts like nonlinear elasticity, chemotherapy or imaging. For a more detailed description see \cite{BJW-ARMA} due to Barron-Jensen-Wang.

Minimizing the functional in problem $(P)$ appears also as a generalization of the Lipschitz extension problem (this is the case when $f=|\cdot|$ and $u_0$ is a given Lipschitz function defined on $\partial\Omega$) and was intensively studied by Aronsson in the 1960's,  cf. \cite{A1,A2,A3, A4}, also developing a theory on absolute minimizers, see also the monograph of Aronsson-Crandall-Juutinen \cite{ACJ} and the references therein. The problem of existence and uniqueness of Lipschitz extension has been addressed by many authors with different tools, cf. for instance \cite{J, ACJS, AS} among a wide literature. In recent years also other questions of the calculus of variations, like lower semicontinuity, relaxation, homogenization, $L^p$ approximations, dimensional reduction, $\Gamma$-convergence and supremal representation, have been addressed for $L^\infty$ functionals by several authors: Barron-Liu \cite{BL}, Barron-Jensen-Wang \cite{BJW99}, Acerbi-Buttazzo-Prinari \cite{ABP}, Briani-Garroni-Prinari \cite{BGP}, Bocea-Nesi \cite{BN}, Prinari \cite{Pr02}, \cite{Pr09}, Cardialaguet-Prinari \cite{CP}, Babadjian-Prinari-Zappale \cite{BPZ}, Zappale \cite{Z}.

The functional defined in problem $(P)$ is known to be lower semicontinuous with respect to the weak* topology of $W^{1,\infty}$ (cf. \cite{BJW99} and \cite{BJ}, see also Theorems \ref{level convexity sufficient} and \ref{level convexity necessary}), if and only if $f$ is a level convex function, that is the level sets of $f$ are convex (see Definition \ref{def1.4}). This notion is usually known in areas like convex analysis, optimization or economics as {\sl quasiconvexity}. However, since {\sl quasiconvexity} has a different meaning in the calculus of variations, we prefer to use the present terminology.

Our main interest is the case in which $f$ is not necessarily a level convex function. In this context we establish necessary and sufficient conditions for existence of solutions to $(P)$. This is done through a differential inclusion which is obtained in turn relating the original problem $(P)$ and the relaxed one
$$
(P^{\rm lc})\qquad\inf\left\{ \operatorname*{ess\,sup}_{x\in\Omega}f^{\rm lc}\left(  \nabla u\left(
x\right)  \right):\ u\in u_0+W_{0}^{1,\infty}(\Omega)\right\},
$$
where $f^{\rm lc}$ denotes the level convex envelope of $f$ (cf. Definition \ref{def1.4}). This was the procedure applied to problems in the integral form
$$ \displaystyle{\inf\left\{ \int_\Omega f\left(  \nabla u\left(
x\right)  \right)dx:\ u\in u_0+W_{0}^{1,\infty}(\Omega)\right\},}$$ and we refer to Cellina \cite{CellinaNecessary}, \cite{CellinaSufficient} and Friesecke \cite{Friesecke} in the scalar case and to Dacorogna-Marcellini \cite{DM95} and Dacorogna-Pisante-Ribeiro \cite{DPR} in the vectorial one.
For further references, see also \cite[Chapter 11]{D}.

In the present context of $L^\infty$ functionals, to our knowledge very few is known, and although our problem is a scalar one, our approach is close to the one used in the vectorial case by Dacorogna-Marcellini \cite{DM95} and Dacorogna-Pisante-Ribeiro \cite{DPR} for integral functionals. Moreover, our results are sharp since differential inclusions in the scalar case are better understood than in the vectorial one. In particular, we characterize existence of solutions to problem $(P)$ when the boundary datum is affine, say $u_0:=u_{\xi_0}$, with gradient $\xi_0 \in \mathbb R^n$ , in terms of suitable  `level convexity' properties of the relaxed density $f^{\rm lc}$ around $\xi_0$.

The paper is organized as follows. Section \ref{levelconvexitysection} is devoted to the study of the properties of a {\sl level convex} function and the {\sl level convex envelope} of a function. This includes supremal Jensen's inequality, Carath\'eodory type results and several notions of {\sl strict level convexity}, which are explored in view of uniqueness results for the minimizing problems with affine boundary data. Moreover, this may also have an independent interest for optimization purposes. Some prerequisites concerning differential inclusions are also recalled in this section.

In Section \ref{SectionRelaxation} we state the relaxation result which will be one of the key results to achieve our necessary and sufficient condition for existence. In particular, under suitable hypothesis, we show in Corollary \ref{Plc=P} that
$$\inf (P)=\inf (P^{\rm lc}).$$

Necessary and sufficient conditions for the existence of solutions to problem $(P)$ are provided and discussed in Section \ref{NecSufCond}. Our main general result is stated as Theorem \ref{NSC} and it establishes that a necessary and sufficient condition for existence of solutions to $(P)$ is $$\exists\ u\in u_0+W_0^{1,\infty}(\Omega):\ f(\nabla u(x))\le \inf (P^{\rm lc}),\ a.e.\ x\in\Omega.$$ Moreover, making use of well known results on differential inclusions, a sufficient condition to this last one can be written as $$\nabla u_0(x)\in L_{\inf (P^{\rm lc})}(f)\cup {\rm int}\, L_{\inf(P^{\rm lc})}(f^{\rm lc}),\ a.e.\ x\in\Omega,$$ where $L_c(g)$ denotes the set of level $c$ of the function $g$, that is
$$L_c(g):=\left\{\xi\in\mathbb{R}^n:\ g(\xi)\le c\right\}.$$ Then, with this characterization in mind, we explore both sufficient and necessary conditions. Regarding sufficient conditions we consider both the cases $u_0$ is an affine function or not. In particular, we can always get existence of solutions in dimension $n=1$, cf. Corollary \ref{existencedim1}, and for arbitrary dimension $n$, and arbitrary data $u_0$, if we require some regularity on the solution of the relaxed problem $(P^{\rm lc})$, together with some constant properties on $f^{\rm lc}$, we can ensure existence of solutions to $(P)$, see Theorem \ref{Theoremexistenceregular}. The constant hypothesis on $f^{\rm lc}$ will be clarified later on Theorem \ref{analog to thm 11.26 Dacorogna}, where necessary and sufficient conditions to have a solution to $(P)$ with affine boundary datum $u_0(x)=<\xi_0,x>+c$, $\xi_0\in\mathbb{R}^n$, will be explored in terms of the set $\displaystyle\{\xi\in\mathbb{R}^n:\ f^{\rm lc}(\xi)=f^{\rm lc}(\xi_0)\}$.

Concerning necessary conditions we follow the ideas of Marcellini \cite{Marcellini83}, Dacorogna-Marcellini \cite{DM95}, and Dacorogna-Pisante-Ribeiro \cite{DPR}. Our approach is done through uniqueness of solutions to a level convex problem of type $(P)$. This can be achieved if the function $f$ is strictly level convex (cf. Definition \ref{sLC}). However, as we observe, it is not reasonable to assume $f^{\rm lc}$ to satisfy such property and, mimicking Dacorogna-Marcellini \cite{DM95}, we introduce the notion of strict level convexity of a function $f$ at a point $\xi_0\in\mathbb{R}^n$ in at least one direction as: for some $\alpha\in\mathbb{R}^n\setminus \{0\}$
$$\left.\begin{array}{l} \xi_0=t\gamma+ (1-t)\eta,\ t\in(0,1)\vspace{0.2cm} \\
f(\xi_0)= \max\{f(\gamma), f(\eta)\} \end{array} \right\}\ \Longrightarrow \ <\gamma-\eta, \alpha>=0.$$
This condition turns out to be a sufficient one for uniqueness of solution to level convex problems with affine boundary datum, see Theorem \ref{thm strict level conv}. As a consequence, in Corollary \ref{corollary slc in at least}, if $f^{\rm lc}(\xi_0) < f(\xi_0)$, we prove that  $f^{\rm lc}$ satisfies the above condition if and only if the original problem $(P)$ does not admit any solution.

Finally, in the Appendix, we briefly address convexity notions in the supremal setting for the vectorial case. In \cite{BJW99}, it was investigated the right notion to ensure lower semicontinuity of the supremal functionals in the vectorial case, together with supremal notions of polyconvexity and rank one convexity. Our goal here is to clarify the relations between these notions.

\section{Level Convexity  and Differential Inclusions}\label{levelconvexitysection}

All through the paper we will use the following notation for affine functions. Given a vector $\xi_0\in\mathbb{R}^n$, by $u_{\xi_0}\Omega\subset\mathbb{R}^n\longrightarrow \mathbb{R}$ we denote a function such that $\nabla u_{\xi_0}(x)=\xi_0$, $a.e.\ x\in\Omega$, or equivalently
$u_{\xi_0}(x):= <\xi_0, x> + c, \hbox{ for some }c \in \mathbb R.$

\subsection{Properties of level convex functions and level convex envelopes}

In this section we establish some results on level convex functions and level convex envelopes which are well known in the usual convexity setting. The main reason to consider level convex functions here is that it is, together with the lower semicontinuity of the function, a sufficient and necessary condition to sequential weak* lower semicontinuity in $W^{1,\infty}(\Omega)$ for functionals in the supremal form (see Theorem \ref{level convexity sufficient} below due to Barron-Jensen-Wang \cite{BJW99} and Theorem \ref{level convexity necessary}).

We first recall definitions and properties on lower semicontinuity. We refer to \cite[Chapter 1]{CDA}, \cite[Chapter 3]{FL}, \cite[Section 7]{Rockafellar}.

\begin{definition}
(i) A function $f:\mathbb{R}^{n}\rightarrow[-\infty,+\infty]$ is said to be \emph{lower semicontinuous} if the level sets $L_c(f):=\left\{\xi\in\mathbb{R}^n:\ f(\xi)\le c\right\}$ are closed for every $c\in\mathbb{R}.$ Equivalently, $f$ is lower semicontinuous if it is sequentially lower semicontinuous, that is, if $$f(\xi)\le\lim\inf f(\xi_n),\ \text{for every }\ \xi_n\to \xi.$$

(ii) The lower semicontinuous envelope of a function $f:\mathbb{R}^{n}\rightarrow[-\infty,+\infty]$ is the function $\mathrm{lsc}f:\mathbb{R}^{n}\rightarrow[-\infty,+\infty]$ defined by $$\mathrm{lsc}f(\xi)=\sup\left\{g(\xi):\ g:\mathbb{R}^{n}\rightarrow[-\infty,+\infty],\ g\text{ lower semicontinuous, } g\le f\right\}.$$
\end{definition}

\begin{proposition}\label{characterization lsc}
Let $f:\mathbb{R}^{n}\rightarrow[-\infty,+\infty]$, the lower semicontinuous envelope of $f$ is a lower semicontinuous function and $$\mathrm{lsc}f(\xi)=\inf\left\{\lim\inf f(\xi_n):\ \xi_n\to \xi\right\},\ \forall\ \xi\in\mathbb{R}^n.$$ Moreover, for every $\xi\in\mathbb{R}^n$ there exists a sequence $\xi_n$ converging to $\xi$, such that $\mathrm{lsc}f(\xi)=\lim f(\xi_n)$.
\end{proposition}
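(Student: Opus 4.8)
The plan is to introduce the candidate function
$$h(\xi) := \inf\left\{\liminf_{n} f(\xi_n):\ \xi_n\to\xi\right\}$$
and to prove that $h$ coincides with $\mathrm{lsc}f$ and is lower semicontinuous; the final assertion of the proposition will then be obtained by a diagonal extraction. In this way the three claims reduce to establishing the identity $\mathrm{lsc}f=h$ together with the lower semicontinuity and attainment properties of $h$.

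First I would record the two easy facts about $h$. Taking the constant sequence $\xi_n\equiv\xi$ gives $\liminf_n f(\xi_n)=f(\xi)$, so $h\le f$. On the other hand, if $g$ is any lower semicontinuous function with $g\le f$, then for every sequence $\xi_n\to\xi$ the sequential lower semicontinuity of $g$ yields $g(\xi)\le\liminf_n g(\xi_n)\le\liminf_n f(\xi_n)$; taking the infimum over all such sequences gives $g(\xi)\le h(\xi)$, and passing to the supremum over admissible $g$ gives $\mathrm{lsc}f\le h$. Note that this inequality does not require any regularity of $h$.

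The core step is to show that $h$ is lower semicontinuous, for then $h$ is itself an admissible competitor in the definition of $\mathrm{lsc}f$, whence $h\le\mathrm{lsc}f$; combined with the previous inequality this gives $h=\mathrm{lsc}f$ and simultaneously proves the first two assertions. To prove lower semicontinuity I would fix $\xi_k\to\xi$ and aim at $h(\xi)\le\liminf_k h(\xi_k)$. For each $k$, using the definition of $h(\xi_k)$ as an infimum, I select a sequence tending to $\xi_k$ whose $f$-values have $\liminf$ within $1/k$ of $h(\xi_k)$ (or below $-k$ when $h(\xi_k)=-\infty$), and from it extract a single point $\zeta_k$ lying within distance $1/k$ of $\xi_k$ with $f(\zeta_k)\le h(\xi_k)+1/k$ (resp.\ $f(\zeta_k)\le -k$). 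Then $\zeta_k\to\xi$, so by the very definition of $h$ one has $\liminf_k f(\zeta_k)\ge h(\xi)$, while the construction forces $\liminf_k f(\zeta_k)\le\liminf_k\big(h(\xi_k)+1/k\big)=\liminf_k h(\xi_k)$; chaining these inequalities yields the claim.

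The same diagonal idea yields the last assertion, and this is the step I expect to require the most care, precisely because of the $[-\infty,+\infty]$-valued setting. Given $\xi$, for each $m$ I choose a sequence converging to $\xi$ whose $\liminf$ of $f$-values lies below $h(\xi)+1/m$ (resp.\ below $-m$ when $h(\xi)=-\infty$), and extract a term $\zeta_m$ with $|\zeta_m-\xi|\le 1/m$ and $f(\zeta_m)\le h(\xi)+1/m$ (resp.\ $f(\zeta_m)\le -m$). Then $\zeta_m\to\xi$, so $\liminf_m f(\zeta_m)\ge h(\xi)=\mathrm{lsc}f(\xi)$ by the definition of $h$, while the construction gives $\limsup_m f(\zeta_m)\le h(\xi)$; hence $\lim_m f(\zeta_m)=\mathrm{lsc}f(\xi)$. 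The only genuine obstacle is to organize the extraction uniformly across the possible values $\pm\infty$, which is handled by treating the finite and infinite cases separately as indicated above.
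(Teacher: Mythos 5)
Your proof is correct. Note that the paper itself gives no proof of this proposition: it is recalled as a classical fact, with references to Carbone--De Arcangelis, Fonseca--Leoni, and Rockafellar, so there is no argument of the authors' to compare against. Your route --- introducing $h(\xi)=\inf\{\liminf f(\xi_n):\xi_n\to\xi\}$, showing $\mathrm{lsc}f\le h$ from sequential lower semicontinuity of competitors, proving $h$ is itself lower semicontinuous by the diagonal extraction of points $\zeta_k$ with $|\zeta_k-\xi_k|\le 1/k$ and $f(\zeta_k)\le h(\xi_k)+1/k$ (or $\le -k$), and then recovering the attainment statement by the same extraction at a fixed $\xi$ --- is exactly the standard textbook argument those references contain, and your handling of the $\pm\infty$ cases is sound; the only cosmetic point is the usual halving of $1/k$ when passing from ``$\liminf$ within $1/k$'' to a single term within $1/k$, which is routine bookkeeping rather than a gap.
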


Now we recall the notion of \emph{level convexity} and the related envelope. We observe that, in Convex Analysis and Operational Research, level convexity is usually referred as \emph{quasiconvexity}. We avoid here this designation because, in the Calculus of Variations, quasiconvexity is known as a different concept. For a reference in Operational Research, see \cite{Mangasarian}.

\begin{definition}\label{def1.4}
(i) A function $f:\mathbb{R}^{n}\rightarrow[-\infty,+\infty] $ is said to be \emph{level convex} if the level sets of $f$, $L_c(f)$, are convex for each $c\in\mathbb{R}.$
Equivalently a function $f:\mathbb R^n \to [-\infty, + \infty]$ is level convex if and only if for every $\xi,\eta \in \mathbb R^n$ and $t \in [0,1]$
\begin{equation}\nonumber
f(t\xi+ (1-t)\eta) \leq \max\{f(\xi), f(\eta)\}.
\end{equation}

(ii) The level convex envelope of a function $f:\mathbb{R}^{n}\rightarrow[-\infty,+\infty]$ is the function $f^{\rm lc}:\mathbb{R}^{n}\rightarrow[-\infty,+\infty]$ defined by $$f^{\rm lc}(\xi)=\sup\left\{g(\xi):\ g:\mathbb{R}^{n}\rightarrow[-\infty,+\infty],\ g\text{ level convex, } g\le f\right\}.$$

(iii) The lower semicontinuous level convex envelope of a function $f:\mathbb{R}^{n}\rightarrow [-\infty, +\infty]$ is the function $f^{\rm lslc}:\mathbb{R}^{n}\rightarrow[-\infty,+\infty]$ defined by $$\begin{array}{l}f^{\rm lslc}(\xi)=\sup\left\{g(\xi):\ g:\mathbb{R}^{n}\rightarrow[-\infty,+\infty],\ g\text{ lower semicontinuous and}\right.\vspace{0.2cm}\\ \hspace{8.3cm}\left.\text{level convex, } g\le f\right\}.\end{array}$$

\end{definition}

\begin{remark}\label{remark envelopes} (i) It is easily seen that $f^{\rm lc}$ is a level convex function and that $f^{\rm lslc}$ is a lower semicontinuous and level convex function. Therefore, we can call these envelopes respectively the greatest level convex function below $f$ and the greatest lower semicontinuous  level convex function below $f$.

(ii) It is easy to verify that $f^{\rm lslc}\le f^{\rm lc}\le f$ and $f^{\rm lslc}\le \mathrm{lsc}f\le f$.

(iii) The function $f(\xi)=-\xi^2$ defined in $\mathbb{R}$ provides an example of a function whose envelopes take the $-\infty$ value, indeed $f^{\rm lc}\equiv f^{\rm lslc}\equiv -\infty$.

(iv) In general, $f^{\rm lc}$ and $f^{\rm lslc}$ don't coincide. Indeed the characteristic function of $\mathbb{R}\setminus (0,1)$ is a level convex function, but it is not lower semicontinuous.

(v) In general a level convex function defined in $\mathbb R^n$, with $n>1$, may not be Borel measurable, in fact one may consider the characteristic function of  the complement of a convex set  which is not Borel measurable.

(vi) An equivalent formulation for $f^{\rm lslc}$ is given by Volle's envelope, $f^{c\gamma}$, introduced in \cite{V}.
\end{remark}

We establish some preliminary properties.

\begin{proposition}\label{level convexity of lsc f}  Let $f:\mathbb{R}^n\rightarrow[-\infty,+\infty]$.

(i) If $f$ is a level convex function, then the lower semicontinuous envelope of $f$ is still level convex, that is $\mathrm{lsc}f$ is level convex.

(ii) The following identity holds: $f^{\rm lslc}=\mathrm{lsc}(f^{\rm lc})$.
\end{proposition}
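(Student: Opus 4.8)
The plan is to prove (i) directly from the sequential characterization of the lower semicontinuous envelope provided in Proposition \ref{characterization lsc}, and then to deduce (ii) as a formal consequence of (i) together with the extremal definitions of the three envelopes.

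For (i), I would fix $\xi,\eta\in\mathbb{R}^n$ and $t\in[0,1]$ and aim at the inequality $\mathrm{lsc}f(t\xi+(1-t)\eta)\le\max\{\mathrm{lsc}f(\xi),\mathrm{lsc}f(\eta)\}$, which is exactly level convexity of $\mathrm{lsc}f$. Using the last assertion of Proposition \ref{characterization lsc}, I would select sequences $\xi_k\to\xi$ and $\eta_k\to\eta$ realizing the envelope, that is $f(\xi_k)\to\mathrm{lsc}f(\xi)$ and $f(\eta_k)\to\mathrm{lsc}f(\eta)$ in $[-\infty,+\infty]$. Since $t\xi_k+(1-t)\eta_k\to t\xi+(1-t)\eta$, the variational characterization of $\mathrm{lsc}f$ gives $\mathrm{lsc}f(t\xi+(1-t)\eta)\le\liminf_k f(t\xi_k+(1-t)\eta_k)$. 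Level convexity of $f$ yields $f(t\xi_k+(1-t)\eta_k)\le\max\{f(\xi_k),f(\eta_k)\}$, and since $\max$ is continuous on $[-\infty,+\infty]^2$ the right-hand side converges to $\max\{\mathrm{lsc}f(\xi),\mathrm{lsc}f(\eta)\}$; chaining these estimates closes the argument. An equivalent and perhaps cleaner route is through level sets: one verifies the identity $L_c(\mathrm{lsc}f)=\bigcap_{\varepsilon>0}\overline{L_{c+\varepsilon}(f)}$, and since each $L_{c+\varepsilon}(f)$ is convex by hypothesis, so are their closures and their intersection, whence $\mathrm{lsc}f$ is level convex.

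For (ii) I would argue by a double inequality exploiting the extremal definitions. By Remark \ref{remark envelopes}(i) the function $f^{\rm lc}$ is level convex, so part (i) guarantees that $\mathrm{lsc}(f^{\rm lc})$ is level convex; it is lower semicontinuous by construction and satisfies $\mathrm{lsc}(f^{\rm lc})\le f^{\rm lc}\le f$. Hence $\mathrm{lsc}(f^{\rm lc})$ is admissible in the supremum defining $f^{\rm lslc}$, which gives $\mathrm{lsc}(f^{\rm lc})\le f^{\rm lslc}$. Conversely, $f^{\rm lslc}$ is level convex and below $f$, so maximality of $f^{\rm lc}$ among level convex functions below $f$ yields $f^{\rm lslc}\le f^{\rm lc}$; as $f^{\rm lslc}$ is moreover lower semicontinuous, maximality of $\mathrm{lsc}(f^{\rm lc})$ among lower semicontinuous functions below $f^{\rm lc}$ gives $f^{\rm lslc}\le\mathrm{lsc}(f^{\rm lc})$. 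The two inequalities produce the claimed identity.

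The only delicate point is the bookkeeping of the extended-real values $\pm\infty$ in part (i): I must ensure that $f(\xi_k)\to\mathrm{lsc}f(\xi)$ is genuine convergence in $[-\infty,+\infty]$ (which Proposition \ref{characterization lsc} does grant) and that both $\max$ and the affine combination behave continuously there, including the degenerate endpoints $t\in\{0,1\}$. Once this is settled the remainder of (i) is a routine chaining of inequalities, and part (ii) is then purely formal, requiring no further analysis.
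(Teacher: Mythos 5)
Your proposal is correct and follows essentially the same route as the paper: part (i) is proved via the realizing sequences of Proposition \ref{characterization lsc} together with level convexity of $f$ and the same chain of inequalities (the paper phrases it through convexity of the level sets $L_c(\mathrm{lsc}f)$, you through the equivalent $\max$-inequality of Definition \ref{def1.4}, which is only a cosmetic difference), and part (ii) is the same formal double inequality using maximality of the three envelopes. Your care with the extended-real values and the endpoints $t\in\{0,1\}$ is sound, so nothing is missing.
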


\begin{proof}
To obtain condition (i) we consider $\xi,\eta\in\mathbb{R}^n$ such that $\mathrm{lsc}f(\xi)\le c$ and $\mathrm{lsc}f(\eta)\le c$ for some fixed $c\in\mathbb{R}$ and we need to show that $\mathrm{lsc}f(\lambda \xi+(1-\lambda) \eta)\le c$ for every $\lambda\in[0,1]$. Using Proposition \ref{characterization lsc} and the level convexity of $f$ we get, for certain sequences $\xi_n\to \xi$ and $\eta_n\to \eta$,
$$\begin{array}{l}\mathrm{lsc}f(\lambda \xi+(1-\lambda) \eta)\le \lim\inf f(\lambda \xi_n+(1-\lambda)\eta_n)\le \lim\inf \max\{f(\xi_n),f(\eta_n)\}\vspace{0.2cm}\\ \phantom{\mathrm{lsc}f(\lambda \xi+(1-\lambda) \eta)}\le\max\{\mathrm{lsc}f(\xi),\mathrm{lsc}f(\eta)\}\le c, \end{array}$$ as desired.

To prove condition (ii) we start noticing that, since $f^{\rm lslc}\le f^{\rm lc}$, one has $f^{\rm lslc}=\mathrm{lsc}(f^{\rm lslc})\le\mathrm{lsc}(f^{\rm lc})$, where we have used the fact that $f^{\rm lslc}$ is lower semicontinuous. On the other hand, by condition (i), $\mathrm{lsc}(f^{\rm lc})$ is level convex and since it is also lower semicontinuous and below $f$ it follows that $\mathrm{lsc}(f^{\rm lc})\le f^{\rm lslc}$.
\end{proof}

Next we relate the level convexity of a function with a generalization of Jensen's inequality for the supremal setting. The proof can be found in Barron \cite[Theorem 30]{Barron}, (see also \cite[Theorem 1.2]{BJW99}, where the theorem is stated under a lower semicontinuity hypothesis).


\begin{theorem}\label{Jensensupremalscalar}
A Borel measurable function $f:\mathbb R^{n}\to \mathbb R$ is level convex if and only if it verifies the supremal Jensen's inequality:
\begin{equation}\nonumber
f\left(\int_\Omega \varphi\,d\mu\right)\leq \mu-\supess_{x\in\Omega} f(\varphi(x))
\end{equation}
for every probability measure $\mu$ on $\mathbb{R}^{d}$ supported on the open set $\Omega \subseteq \mathbb{R}^{d}$, and every $\varphi \in L^1_\mu(\Omega;\mathbb{R}^{n})$.

In particular, considering the Lebesgue measure, if $\Omega$ is a set with finite Lebesgue measure, $$f\left(\frac{1}{|\Omega|}\int_\Omega\varphi(x)\,dx\right)\le \operatorname*{ess\,sup}_{x\in\Omega}f(\varphi(x)),\ \forall\ \varphi\in L^1(\Omega;\mathbb{R}^n).$$
\end{theorem}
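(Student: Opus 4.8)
Since the statement is an equivalence, the plan is to prove the two implications separately. The implication ``supremal Jensen's inequality $\Rightarrow$ level convexity'' is the elementary one. Given $\xi,\eta\in\mathbb{R}^n$ and $t\in(0,1)$ (the cases $t\in\{0,1\}$ being trivial), I would fix two distinct points $x_1,x_2\in\Omega$, possible since $\Omega$ is open and nonempty, and test the inequality with the discrete probability measure $\mu=t\,\delta_{x_1}+(1-t)\,\delta_{x_2}$ and the simple map $\varphi$ with $\varphi(x_1)=\xi$, $\varphi(x_2)=\eta$ (and an irrelevant value elsewhere). Then $\int_\Omega\varphi\,d\mu=t\xi+(1-t)\eta$, while $\mu\text{-}\supess_{x\in\Omega}f(\varphi(x))=\max\{f(\xi),f(\eta)\}$ because both atoms carry positive mass. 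Jensen's inequality thus reduces exactly to $f(t\xi+(1-t)\eta)\le\max\{f(\xi),f(\eta)\}$, which is the defining inequality of level convexity in Definition \ref{def1.4}(i).

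For the converse ``level convexity $\Rightarrow$ supremal Jensen's inequality'', I would fix a probability measure $\mu$ on $\Omega$ and $\varphi\in L^1_\mu(\Omega;\mathbb{R}^n)$, and set $c:=\mu\text{-}\supess_{x\in\Omega}f(\varphi(x))$. If $c=+\infty$ there is nothing to prove, so assume $c\in\mathbb{R}$ (note $c>-\infty$ since $f$ is real-valued). By definition of essential supremum, $\varphi(x)\in L_c(f)$ for $\mu$-a.e.\ $x$, and $L_c(f)$ is convex by level convexity of $f$. Writing $b:=\int_\Omega\varphi\,d\mu$, the entire matter then reduces to the following barycentre lemma: if a probability measure has its essential range contained in a convex set $C\subseteq\mathbb{R}^n$, then its barycentre belongs to $C$. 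Applying this with $C=L_c(f)$ gives $f(b)\le c$, which is exactly Jensen's inequality; the final displayed inequality is the special case in which $\mu$ is normalized Lebesgue measure on a set $\Omega$ of finite measure.

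The crux of the argument, and the step I expect to be the main obstacle, is the barycentre lemma, precisely because $L_c(f)$ need not be closed: level convex functions may fail to be lower semicontinuous (cf.\ Remark \ref{remark envelopes}(iv)), so it is not enough to invoke the classical fact that the barycentre lies in the \emph{closure} $\overline{C}$. I would prove the lemma by induction on $\dim(\mathrm{aff}\,C)$, reducing after a translation to the case $\mathrm{aff}\,C=\mathbb{R}^n$. If $b\in\mathrm{int}\,C$ we are done; otherwise $b$ is a boundary point of $\overline{C}$, so there exists a supporting hyperplane $H=\{x:\langle a,x\rangle=\beta\}$ with $\langle a,\cdot\rangle\le\beta$ on $C$ and $\langle a,b\rangle=\beta$. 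Since $\langle a,\varphi(x)\rangle\le\beta$ for $\mu$-a.e.\ $x$ while $\int_\Omega\langle a,\varphi\rangle\,d\mu=\langle a,b\rangle=\beta$, the integrand must equal $\beta$ $\mu$-a.e., i.e.\ $\varphi(x)\in C\cap H$ for $\mu$-a.e.\ $x$. As $C\cap H$ is a convex set of strictly smaller affine dimension whose barycentre of $\varphi$ is still $b$, the induction hypothesis yields $b\in C\cap H\subseteq C$, closing the argument. The measurability needed throughout, namely that $f\circ\varphi$ is $\mu$-measurable and that $L_c(f)$ is Borel, is guaranteed by the Borel measurability hypothesis on $f$.
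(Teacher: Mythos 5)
Your proposal is correct, but there is nothing in the paper to compare it against line by line: the paper does not prove Theorem \ref{Jensensupremalscalar} at all, it simply cites Barron (Theorem 30 of \cite{Barron}) for the Borel-measurable case and notes that \cite{BJW99} proves the same statement under an extra lower semicontinuity hypothesis. Your argument is thus a self-contained substitute for that citation, and it is sound. The easy direction (two-atom measure $t\delta_{x_1}+(1-t)\delta_{x_2}$ forcing $f(t\xi+(1-t)\eta)\le\max\{f(\xi),f(\eta)\}$) is the standard one. The real content is your converse, and you correctly isolate the exact point that separates the Borel statement from the lsc statement of \cite{BJW99}: without lower semicontinuity $L_c(f)$ is convex but need not be closed, so the classical fact that the barycentre lies in the \emph{closure} of the essential range is not enough. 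Your barycentre lemma --- $\varphi(x)\in C$ $\mu$-a.e.\ with $C$ convex implies $\int_\Omega\varphi\,d\mu\in C$ --- proved by induction on $\dim(\mathrm{aff}\,C)$ via a supporting hyperplane and the equality case of the integrated linear inequality, is precisely the right tool, and the induction closes: $C\cap H$ is convex, has strictly smaller affine dimension, contains $\varphi(x)$ $\mu$-a.e., and has the same barycentre. Two small points deserve to be made explicit rather than implicit: first, before calling $b$ a boundary point of $\overline{C}$ you need $b\in\overline{C}$, which follows either from the classical fact you allude to or directly by strict separation (if $b\notin\overline{C}$ then $\langle a,b\rangle=\int_\Omega\langle a,\varphi\rangle\,d\mu\le\beta<\langle a,b\rangle$, a contradiction); second, the base case $\dim(\mathrm{aff}\,C)=0$ and the fact that after the reduction $\mathrm{int}\,C=\mathrm{int}\,\overline{C}\neq\emptyset$, so a supporting hyperplane at $b$ exists and $C\not\subseteq H$. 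Neither is a gap, only bookkeeping; with those sentences added your proof is complete and, unlike the paper, makes the document independent of the external reference.
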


From Carath\'eodory's theorem, it follows the next characterization of the level convex envelope of a function. For  this characterization, under slightly different assumptions, we refer to \cite[Theorem 5.5]{BL}.

\begin{theorem}\label{Caratheodory for f lc}
Let $f:\mathbb{R}^{n}\rightarrow(-\infty,+\infty]$ be a function such that $f^{\rm lc}>-\infty$. Then
\begin{equation}\nonumber
f^{\rm lc}(\xi)=\inf\left\{\max_{1\le i\le n+1}f(\xi_i):\ \xi=\sum_{i=1}^{n+1}\lambda_i \xi_i,\ \lambda_i\ge 0,\ \sum_{i=1}^{n+1}\lambda_i=1\right\},\ \forall\ \xi\in \mathbb{R}^n.
\end{equation} Moreover, if $f$ is continuous and $\displaystyle\lim_{|\xi|\to +\infty}f(\xi)=+\infty$, then the infimum above is indeed a minimum and $f^{\rm lc}$ is a continuous function. In particular, in this case, $f^{\rm lslc}=f^{\rm lc}$.
\end{theorem}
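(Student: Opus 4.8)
Write $g(\xi)$ for the right-hand side of the claimed identity. The plan is to show that $g$ coincides with the greatest level convex minorant of $f$, and the whole argument rests on a single use of Carath\'eodory's theorem. The key object is the set identity
\[
\{g<c\}=\mathrm{conv}\{f<c\},\qquad \forall\,c\in\mathbb{R},
\]
where $\mathrm{conv}$ denotes the convex hull. The inclusion $\{g<c\}\subseteq\mathrm{conv}\{f<c\}$ is immediate: if $g(\xi)<c$, the definition of the infimum yields a representation $\xi=\sum_{i=1}^{n+1}\lambda_i\xi_i$ with $\max_i f(\xi_i)<c$, so every $\xi_i$ lies in $\{f<c\}$ and $\xi\in\mathrm{conv}\{f<c\}$. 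The reverse inclusion is exactly Carath\'eodory's theorem in $\mathbb{R}^n$: any $\xi\in\mathrm{conv}\{f<c\}$ is a convex combination of at most $n+1$ points of $\{f<c\}$, which (after padding with repeated points, harmless for the maximum) exhibits $g(\xi)\le\max_i f(\xi_i)<c$.

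From this identity the first assertion follows quickly. Since $\{g\le c\}=\bigcap_{c'>c}\{g<c'\}$ is an intersection of convex sets, it is convex, so $g$ is level convex; and the one-point representation $\xi=\xi$ shows $g\le f$. As $f^{\rm lc}$ is the greatest level convex function below $f$ (Remark \ref{remark envelopes}(i)), we obtain $g\le f^{\rm lc}$. For the opposite inequality I would use the finite supremal Jensen inequality for level convex functions: iterating the two-point inequality in Definition \ref{def1.4}(i), any level convex $h$ satisfies $h(\sum_i\lambda_i\xi_i)\le\max_i h(\xi_i)$ along convex combinations. Applying this to $f^{\rm lc}$ and using $f^{\rm lc}\le f$ gives, for every admissible representation of $\xi$, $f^{\rm lc}(\xi)\le\max_i f^{\rm lc}(\xi_i)\le\max_i f(\xi_i)$; taking the infimum over representations yields $f^{\rm lc}\le g$. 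Hence $g=f^{\rm lc}$.

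For the additional statements assume $f$ continuous with $f(\xi)\to+\infty$ as $|\xi|\to+\infty$. Then $f$ attains a finite minimum $m_0$, and since constants are level convex we get $m_0\le f^{\rm lc}\le f<+\infty$, so $f^{\rm lc}$ is real-valued. Now $\{f<c\}$ is open (by continuity) and bounded (by coercivity); its convex hull inherits both properties, since the convex hull of an open subset of $\mathbb{R}^n$ is open and the convex hull of a bounded set is bounded. By the identity above, $\{f^{\rm lc}<c\}=\mathrm{conv}\{f<c\}$ is therefore open and bounded for every $c$: openness of all strict sublevel sets means $f^{\rm lc}$ is upper semicontinuous, while boundedness means $f^{\rm lc}$ is coercive.

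It remains to produce the minimum and the lower semicontinuity, and here I would run a compactness argument. In the definition of $f^{\rm lc}(\xi)$ a minimizing sequence of representations has $\max_i f(\xi_i^{(k)})$ bounded, so by coercivity of $f$ all the points $\xi_i^{(k)}$ stay in a fixed ball; extracting convergent weights and points and passing to the limit, using continuity of $f$ and of the maximum, produces a representation of $\xi$ realizing the infimum, so the infimum is a minimum. The same extraction applied along a sequence $\xi_k\to\xi$ with $f^{\rm lc}(\xi_k)\to\liminf_k f^{\rm lc}(\xi_k)$ yields a limiting representation of $\xi$ showing $f^{\rm lc}(\xi)\le\liminf_k f^{\rm lc}(\xi_k)$, i.e.\ $f^{\rm lc}$ is lower semicontinuous. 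Being both upper and lower semicontinuous, $f^{\rm lc}$ is continuous, and in particular lower semicontinuous, so Proposition \ref{level convexity of lsc f}(ii) gives $f^{\rm lslc}=\mathrm{lsc}(f^{\rm lc})=f^{\rm lc}$. The one genuinely delicate step is this last compactness argument for attainment and lower semicontinuity, where coercivity is essential to keep the Carath\'eodory points from escaping to infinity; the rest is organized around the set identity and the single invocation of Carath\'eodory that makes the level convexity of $g$ transparent.
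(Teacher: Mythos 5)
Your argument is correct in its main structure but reaches the identity $g=f^{\rm lc}$ by a genuinely different route. The paper first defines the infimum over convex combinations of an \emph{arbitrary} number $I$ of points, proves that this auxiliary function is level convex by an $\varepsilon$-argument (splicing together near-optimal representations of two points---which is precisely why arbitrary $I$ is needed there), identifies it with $f^{\rm lc}$, and only afterwards invokes Carath\'eodory to cut $I$ down to $n+1$. You work with $n+1$ points from the outset and obtain level convexity of $g$ for free from the set identity $\{g<c\}=\mathrm{co}\,\{f<c\}$, with Carath\'eodory supplying the nontrivial inclusion; the inequality $f^{\rm lc}\le g$ by iterated two-point Jensen is common to both proofs. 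Your upper-semicontinuity argument also differs: the paper translates an optimal representation of $\xi$ by $\xi_\nu-\xi$ and uses continuity of $f$, whereas you note that $\{f^{\rm lc}<c\}=\mathrm{co}\,\{f<c\}$ is open because the convex hull of an open set is open; this is shorter, and as a by-product your identity for strict sublevel sets holds with no hypotheses beyond $f^{\rm lc}>-\infty$, a sharper fact than Corollary \ref{Corollary to Caratheodory} (which concerns non-strict sublevel sets and assumes continuity and coercivity). The attainment and lower-semicontinuity steps (coercivity confines the Carath\'eodory points to a fixed ball; extract convergent subsequences and pass to the limit) are essentially the paper's.

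One point needs repair: $f$ maps into $(-\infty,+\infty]$, so it may take the value $+\infty$ (the paper's own proof explicitly treats this case), and your chain $m_0\le f^{\rm lc}\le f<+\infty$ is then unjustified; $f^{\rm lc}$ need not be real-valued. For instance, if $f=+\infty$ outside the unit ball, then $\{f^{\rm lc}<c\}=\mathrm{co}\,\{f<c\}$ lies in that ball for every $c$, so $f^{\rm lc}=+\infty$ off it. This does not sink the proof, but your compactness argument for attainment presupposes $f^{\rm lc}(\xi)<+\infty$. At a point where $f^{\rm lc}(\xi)=+\infty$, every admissible representation---in particular the trivial one $\xi=1\cdot\xi$---has $\max_i f(\xi_i)=+\infty$, so the infimum is trivially a minimum; this is exactly how the paper disposes of that case. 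With this one-line case distinction added (your lower- and upper-semicontinuity arguments go through unchanged for extended values), the proof is complete.
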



\begin{proof}
Let $$h(\xi):=\inf\left\{\max_{1\le i\le I}f(\xi_i):\ \xi=\sum_{i=1}^{I}\lambda_i \xi_i,\ \lambda_i\ge 0,\ \sum_{i=1}^{I}\lambda_i=1,\ I\in\mathbb{N}\right\},\ \forall\ \xi\in \mathbb{R}^n.$$ We start showing that $f^{\rm lc}= h$.

Observe that, since $f^{\rm lc}$ is level convex and $f^{\rm lc}\le f$, if $\displaystyle \xi=\sum_{i=1}^{I}\lambda_i \xi_i$ for some  $\lambda_i\ge 0$ such that $\displaystyle \sum_{i=1}^{I}\lambda_i=1$, then $\displaystyle f^{\rm lc}(\xi)\le \max_{1\le i\le I}f(\xi_i)$. From this we conclude that $-\infty< f^{\rm lc}\le h$.

Now we prove that $h$ is level convex. Once this is proved, we achieve the identity $f^{\rm lc}=h$ by definition of $f^{\rm lc}$ and because $h\le f$. Let $c\in\mathbb{R}$, we need to show that $L_c(h)$ is convex. Let $\xi,\eta\in L_c(h)$ and $\lambda\in(0,1)$, we have to show that $h(\lambda \xi+(1-\lambda)\eta)\le c$. Since $h>-\infty$ we just need to show that, given $\varepsilon>0$, we can find $I\in\mathbb{N}$, $\lambda_i\ge 0$, with $\displaystyle\sum_{i=1}^{I}\lambda_i=1$ and $z_i\in \mathbb{R}^n$ such that $\displaystyle\lambda \xi+(1-\lambda)\eta=\sum_{i=1}^{I}\lambda_i z_i$ and $\displaystyle\max_{1\le i\le I}f(z_i)\le c+\varepsilon$. This follows easily from the fact that $\xi,\eta\in L_c(h)$ and thus we have that $f^{\rm lc}=h$.

Next we show that $I$ can be reduced to $n+1$ achieving the first assertion of the theorem. This follows from Carath\'eodory's theorem. Indeed, let $\xi\in\mathbb{R}^n$ and assume $\displaystyle \xi=\sum_{i=1}^I\lambda_i \xi_i$ for some $I>n+1$, $\xi_i\in\mathbb{R}^n$ and $\lambda_i\ge 0$, with $\displaystyle\sum_{i=1}^{I}\lambda_i=1$. In particular, $\xi\in \mathrm{co}\{\xi_1,\xi_2,...,\xi_I\}\subset\mathbb{R}^n$ and, by Carath\'eodory's theorem, we can write $\displaystyle \xi=\sum_{j=1}^{n+1}\mu_j \xi_{\gamma(j)}$ for some $\mu_j\ge 0$, with $\displaystyle \sum_{i=1}^{n+1}\mu_j=1$ and $\gamma:\{1,2,...,n+1\}\rightarrow\{1,2,...,I\}$ an into function. Defining $\eta_j=\xi_{\gamma(j)}$ we obviously have $\displaystyle\max_{1\le j\le n+1}f(\eta_j)\le\max_{1\le i\le I}f(\xi_i)$ which shows our goal.

Now we show the assertion of the theorem saying that the infimum is attained as a minimum. Let $\xi\in\mathbb{R}^n$ and let $\lambda_i^\nu\ge 0$, with $\displaystyle\sum_{i=1}^{n+1}\lambda_i^\nu=1$, and $\xi_i^\nu\in\mathbb{R}^n$ be such that $\displaystyle \xi=\sum_{i=1}^{n+1}\lambda_i^\nu \xi_i^\nu$, $\displaystyle f^{\rm lc}(\xi)=\lim_{\nu\to\infty}\max_{1\le i\le n+1}f(\xi_i^\nu)$. Without loss of generality we can assume $\displaystyle\max_{1\le i\le n+1}f(\xi_i^\nu)=f(\xi_1^\nu)$. By the assumption on the limit of $f$ at infinity we can reduce to the case where the sequences $\xi_i^\nu$ are bounded otherwise $f^{\rm lc}(\xi)=+\infty$, thus $f(\xi)=+\infty$ and the minimum is attained through the trivial convex combination of $\xi$: $\xi=1\cdot \xi$. In the case where the sequences $\xi_i^\nu$ are bounded, we have, up to a subsequence,  $\displaystyle\lim_{\nu\to\infty}\xi_i^\nu=\xi_i$ and $\displaystyle \lim_{\nu\to\infty}\lambda_i^\nu=\lambda_i$, for every $i=1,\dots, n+1$. Clearly $\displaystyle\sum_{i=1}^{n+1}\lambda_i=1$ and $\displaystyle\sum_{i=1}^{n+1}\lambda_i \xi_i=\xi.$ Using the continuity hypothesis on $f$, we get $\displaystyle f(\xi_1)=\lim_{\nu\to\infty}f(\xi_1^\nu)\ge\lim_{\nu\to\infty}f(\xi_i^\nu)=f(\xi_i)$ for every $i=1,...,n+1$ and $f^{\rm lc}(\xi)=f(\xi_1)$.

Finally we show that $f^{\rm lc}$ is continuous, under the continuity assumption on $f$ and its behavior at infinity. Let $\xi,\xi_\nu \in\mathbb{R}^n$ be such that $\displaystyle \lim_{\nu\to\infty} \xi_\nu=\xi$. First we show that $\displaystyle f^{\rm lc}(\xi)\le \lim\inf f^{\rm lc}(\xi_\nu)$. Without loss of generality assume $\lim\inf f^{\rm lc}(\xi_\nu)=\lim f^{\rm lc}(\xi_\nu)$. From what was already proved, we can consider, for each $\nu$, $\lambda_i^\nu\ge 0$, with $\displaystyle\sum_{i=1}^{n+1}\lambda_i^\nu=1$, and $\xi_i^\nu\in\mathbb{R}^n$ such that $\displaystyle \xi_\nu=\sum_{i=1}^{n+1}\lambda_i^\nu \xi_i^\nu$ and  $\displaystyle f^{\rm lc}(\xi_\nu)=\max_{1\le i\le n+1}f(\xi_i^\nu)$. Re-ordering if necessary the elements $\xi_i^\nu$, we can assume $\displaystyle\max_{1\le i\le n+1}f(\xi_i^\nu)=f(\xi_1^\nu)$ and thus $f^{\rm lc}(\xi_\nu)=f(\xi_1^\nu)$. We consider two cases. If, up to a subsequence, for some $i$, $\displaystyle \lim_{\nu \to \infty}|\xi_i^\nu|=+\infty$ then the desired inequality follows from the assumption that $\displaystyle \lim_{|z|\to\infty}f(z)=+\infty$. Otherwise we can write, up to a subsequence, that $\displaystyle  \lim_{\nu\to\infty} \xi_i^\nu=\xi_i$ and $\displaystyle  \lim_{\nu\to\infty} \lambda^\nu_i=\lambda_i$. Observe that, the continuity of $f$ implies that $\max_{1\le i\le n+1}f(\xi_i)=f(\xi_1)$ and thus $$\lim f^{\rm lc}(\xi_\nu)=\lim f(\xi_1^\nu)=f(\xi_1)=\max_{1\le i\le n+1}f(\xi_i)\ge f^{\rm lc}(\xi)$$ this last inequality following from the fact that $\displaystyle \xi=\sum_{i=1}^{n+1}\lambda_i \xi_i$ and the first assertion of the present theorem. To establish the continuity of $f^{\rm lc}$ on $\xi$, it remains to show that  $\displaystyle f^{\rm lc}(\xi)\ge \lim\sup f^{\rm lc}(\xi_\nu)$. Again, let's assume $\lim\sup f^{\rm lc}(\xi_\nu)=\lim f^{\rm lc}(\xi_\nu)$. Then, as before, we have $\displaystyle \xi=\sum_{i=1}^{n+1}\lambda_i \xi_i$ for some $\lambda_i\ge 0$, with $\displaystyle\sum_{i=1}^{n+1}\lambda_i=1$, $\displaystyle\sum_{i=1}^{n+1}\lambda_i=1$, $\xi_i\in\mathbb{R}^n$ and $\displaystyle f^{\rm lc}(\xi)=\max_{1\le i\le n+1}f(\xi_i)$.
Defining $\xi_i^\nu=\xi_i+\xi_\nu-\xi$, we have $\displaystyle \xi_\nu=\sum_{i=1}^{n+1}\lambda_i \xi_i^\nu$ and thus $\displaystyle f^{\rm lc}(\xi_\nu)\le \max_{1\le i\le n+1}f(\xi_i^\nu)$. Since $\displaystyle \lim_{\nu\to\infty}\xi_i^\nu=\xi_i$, the continuity of $f$ implies that $\displaystyle \lim_{\nu\to\infty}f(\xi_i^\nu)=f(\xi_i)$ and thus $$\lim f^{\rm lc}(\xi_\nu)\le \lim\sup \max_{1\le i\le n+1}f(\xi_i^\nu)=\max_{1\le i\le n+1}f(\xi_i)=f^{\rm lc}(\xi),$$ as desired.\end{proof}\medskip

In particular, we get the following characterization of the convex hulls of the level sets of a function.

\begin{corollary}\label{Corollary to Caratheodory}
Let $f:\mathbb{R}^{n}\rightarrow(-\infty,+\infty]$ be a continuous function such that $f^{\rm lc}>-\infty$ and $\displaystyle\lim_{|\xi|\to +\infty}f(\xi)=+\infty$, then $${\rm  co }\left\{\xi\in\mathbb{R}^n:\ f(\xi)\le c\right\}=\left\{\xi\in\mathbb{R}^n:\ f^{\rm lc}(\xi)\le c\right\},\ \forall\ c\in\mathbb{R}.$$
\end{corollary}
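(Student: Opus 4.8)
The plan is to prove the set equality by establishing the two inclusions separately, with Theorem \ref{Caratheodory for f lc} as the essential tool. The crucial point is that the hypotheses on $f$ (continuity and the coercivity $\lim_{|\xi|\to+\infty}f(\xi)=+\infty$) are inherited precisely so that the infimum in that theorem is attained as a minimum, and it is this attainment that drives the nontrivial inclusion.

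First I would prove ${\rm co}\,L_c(f)\subseteq L_c(f^{\rm lc})$. Fix $c\in\mathbb{R}$ and take $\xi\in{\rm co}\,L_c(f)$. By the classical Carath\'eodory theorem in $\mathbb{R}^n$, one can write $\xi=\sum_{i=1}^{n+1}\lambda_i\xi_i$ with $\lambda_i\ge 0$, $\sum_{i=1}^{n+1}\lambda_i=1$ and $\xi_i\in L_c(f)$, hence $f(\xi_i)\le c$ for each $i$. The characterization of $f^{\rm lc}$ as an infimum over such convex representations in Theorem \ref{Caratheodory for f lc} then gives at once $f^{\rm lc}(\xi)\le\max_{1\le i\le n+1}f(\xi_i)\le c$, so $\xi\in L_c(f^{\rm lc})$. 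Note that this direction uses only the easy inequality $f^{\rm lc}\le\max_i f(\xi_i)$ and requires neither continuity nor coercivity.

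Next I would prove the reverse inclusion $L_c(f^{\rm lc})\subseteq{\rm co}\,L_c(f)$, where the full strength of Theorem \ref{Caratheodory for f lc} enters. Let $\xi\in L_c(f^{\rm lc})$, i.e. $f^{\rm lc}(\xi)\le c$. Since $f$ is continuous and coercive, the infimum defining $f^{\rm lc}(\xi)$ is a minimum, so there exist $\xi_i\in\mathbb{R}^n$ and $\lambda_i\ge 0$ with $\sum_{i=1}^{n+1}\lambda_i=1$, $\xi=\sum_{i=1}^{n+1}\lambda_i\xi_i$ and $f^{\rm lc}(\xi)=\max_{1\le i\le n+1}f(\xi_i)$. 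Consequently $f(\xi_i)\le\max_{1\le j\le n+1}f(\xi_j)=f^{\rm lc}(\xi)\le c$ for every $i$, so each $\xi_i$ lies in $L_c(f)$ and $\xi$, being a convex combination of these points, belongs to ${\rm co}\,L_c(f)$.

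The main obstacle is exactly the attainment of the minimum used in the second inclusion: without it one would only obtain, for each $\varepsilon>0$, a convex representation with $\max_i f(\xi_i)\le c+\varepsilon$, placing $\xi$ in ${\rm co}\,L_{c+\varepsilon}(f)$ rather than in ${\rm co}\,L_c(f)$, and an extra limiting argument would be needed to close the gap. This is precisely what the continuity and coercivity of $f$ secure, and it is already packaged into Theorem \ref{Caratheodory for f lc}, so the corollary reduces to a direct application of that result. Finally, the degenerate case $L_c(f)=\emptyset$ is covered automatically, since the argument for the second inclusion shows that $L_c(f^{\rm lc})$ is then empty as well, making both sides empty.
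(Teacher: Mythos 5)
Your proof is correct and follows essentially the same route as the paper: the nontrivial inclusion $L_c(f^{\rm lc})\subseteq{\rm co}\,L_c(f)$ is obtained exactly as in the paper, by invoking the attainment of the minimum in Theorem \ref{Caratheodory for f lc} to write $\xi$ as a convex combination with $f^{\rm lc}(\xi)=\max_i f(\xi_i)\le c$. The only cosmetic difference is in the easy inclusion, which the paper dismisses as immediate (via $f^{\rm lc}\le f$ and convexity of $L_c(f^{\rm lc})$) while you route it through the infimum formula of the same theorem; both are fine.
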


\begin{remark}
We can get the same assertion of the corollary if we assume $f$ lower semicontinuous, bounded from below and such that $\displaystyle \lim_{|\xi|\to\infty}\frac{f(\xi)}{|\xi|}=+\infty$. This is achieved by mimicking the proof but using Theorem \ref{thm1.8} below, instead of Theorem \ref{Caratheodory for f lc}.
\end{remark}

\begin{proof}
Of course the first set in the equality is included in the second one. Now let $\xi\in \mathbb{R}^n$ be such that $f^{\rm lc}(\xi)\le c$. Then, by Theorem \ref{Caratheodory for f lc}, there exist, for $1\le i\le n+1$, $\lambda_i\ge 0$, $\xi_i\in\mathbb{R}^n$, such that $\displaystyle \xi=\sum_{i=1}^{n+1}\lambda_i \xi_i,$ $\displaystyle \sum_{i=1}^{n+1}\lambda_i=1 $ and $\displaystyle f^{\rm lc}(\xi)=\max_{1\le i\le n+1}f(\xi_i)$. Therefore $\xi\in {\rm co}\,\{\xi_1,...,\xi_{n+1}\}$ and $\displaystyle \max_{1\le i\le n+1}f(\xi_i)\le c$ and thus $\displaystyle \xi\in {\rm  co }\left\{\eta\in\mathbb{R}^n:\ f(\eta)\le c\right\}.$
\end{proof}
\bigskip

In the same spirit of \cite[Theorem 4.98]{FL} for the convex setting, we can also get the following result which provides in particular a characterization of $f^{\rm lslc}$. The proof is omitted.

\begin{theorem}\label{thm1.8}
 Let $f:\mathbb{R}^n\rightarrow(-\infty,+\infty]$ be a function bounded from below such that $\displaystyle \lim_{|\xi|\to\infty}\frac{f(\xi)}{|\xi|}=+\infty$.

(i) If $f$ is lower semicontinuous, then the level convex envelope of $f$ is still lower semicontinuous, that is $f^{\rm lc}$ is lower semicontinuous.

(ii) The following identities hold:
$$\begin{array}{l}f^{\rm lslc}(\xi)=\mathrm{lsc}(f^{\rm lc})(\xi)=(\mathrm{lsc}f)^{\rm lc}(\xi)=\vspace{0.2cm}\\ \displaystyle =\min\left\{\max_{1\le i\le n+1}\mathrm{lsc}f(\xi_i):\ \xi=\sum_{i=1}^{n+1}\lambda_i \xi_i,\ \lambda_i\ge 0,\ \sum_{i=1}^{n+1}\lambda_i=1\right\},\ \forall\ \xi\in \mathbb{R}^n.\end{array}$$
\end{theorem}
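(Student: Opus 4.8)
The plan is to reduce everything to the Carath\'eodory-type representation of Theorem \ref{Caratheodory for f lc}. First I would observe that, since $f$ is bounded from below by some constant $m$ and constants are level convex, we have $m\le f^{\rm lc}$, so $f^{\rm lc}>-\infty$ and the infimum representation of Theorem \ref{Caratheodory for f lc} (whose first assertion does \emph{not} require continuity) is available for $f$. The whole point is that, under the present hypotheses, the superlinear growth lets one replace the continuity used in Theorem \ref{Caratheodory for f lc} by mere lower semicontinuity, at the cost of working with one-sided inequalities, which is exactly what the extremal characterization of the envelope tolerates.

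For part (i) I would first show that the infimum is attained. Fix $\xi$; if $f^{\rm lc}(\xi)=+\infty$ then $f(\xi)=+\infty$ (as $f^{\rm lc}\le f$) and the trivial combination $\xi=1\cdot\xi$ works. Otherwise take a minimizing sequence $\xi=\sum_{i=1}^{n+1}\lambda_i^\nu\xi_i^\nu$ with $\max_i f(\xi_i^\nu)\to f^{\rm lc}(\xi)<+\infty$. Since the values $f(\xi_i^\nu)$ stay bounded, superlinearity (which forces $f(\xi)\to+\infty$ as $|\xi|\to\infty$) bounds every $|\xi_i^\nu|$; passing to a subsequence, $\xi_i^\nu\to\xi_i$ and $\lambda_i^\nu\to\lambda_i$, with $\xi=\sum\lambda_i\xi_i$ and $\sum\lambda_i=1$. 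Lower semicontinuity then gives $f(\xi_i)\le\liminf_\nu f(\xi_i^\nu)$, hence $\max_i f(\xi_i)\le\liminf_\nu\max_i f(\xi_i^\nu)=f^{\rm lc}(\xi)$, and the representation yields the reverse inequality, so the minimum is attained. The lower semicontinuity of $f^{\rm lc}$ follows from the \emph{same} compactness core: given $\xi_\nu\to\xi$ with $f^{\rm lc}(\xi_\nu)\to\ell<+\infty$, represent each $f^{\rm lc}(\xi_\nu)=\max_i f(\xi_i^\nu)$ by the just-proved minimality, bound the $|\xi_i^\nu|$ by coercivity, extract limits, and use lower semicontinuity of $f$ together with the infimum representation to conclude $f^{\rm lc}(\xi)\le\max_i f(\xi_i)\le\ell$.

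For part (ii), the identity $f^{\rm lslc}=\mathrm{lsc}(f^{\rm lc})$ is already Proposition \ref{level convexity of lsc f}(ii) and is free. To identify these with $(\mathrm{lsc}f)^{\rm lc}$, I would first check that $\mathrm{lsc}f$ inherits the hypotheses of part (i): it is lower semicontinuous by Proposition \ref{characterization lsc}, bounded below by the same $m$, and still superlinear, since for $|\xi|$ large any sequence $\xi_j\to\xi$ has $f(\xi_j)\ge M|\xi_j|$ eventually, whence $\mathrm{lsc}f(\xi)\ge M(|\xi|-1)$. Then $(\mathrm{lsc}f)^{\rm lc}$ is level convex by definition, lower semicontinuous by part (i), and $\le\mathrm{lsc}f\le f$, so $(\mathrm{lsc}f)^{\rm lc}\le f^{\rm lslc}$; conversely $f^{\rm lslc}\le\mathrm{lsc}f$ (Remark \ref{remark envelopes}(ii)) and $f^{\rm lslc}$ is level convex, so $f^{\rm lslc}\le(\mathrm{lsc}f)^{\rm lc}$, giving equality. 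The closing $\min$-formula is obtained by applying part (i) verbatim to $g:=\mathrm{lsc}f$ (which satisfies its hypotheses and $g^{\rm lc}>-\infty$): the attained infimum representation reads $(\mathrm{lsc}f)^{\rm lc}(\xi)=\min\{\max_i\mathrm{lsc}f(\xi_i):\xi=\sum\lambda_i\xi_i,\ \lambda_i\ge0,\ \sum\lambda_i=1\}$, which is precisely the asserted expression.

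The main obstacle is the attainment/compactness step. The delicate point is not the algebra but keeping superlinearity in control of the \emph{unbounded} components of a minimizing sequence: one must rule out that some $|\xi_i^\nu|\to\infty$ while $\lambda_i^\nu\to0$ keeps $\sum\lambda_i^\nu\xi_i^\nu$ bounded, which is handled by noting that the objective $\max_i f(\xi_i^\nu)$, not the constraint, forces every $f(\xi_i^\nu)$ to stay bounded and hence every $|\xi_i^\nu|$ bounded. The secondary subtlety is that, unlike in Theorem \ref{Caratheodory for f lc}, lower semicontinuity only gives $f(\xi_i)\le\liminf_\nu f(\xi_i^\nu)$ rather than an equality; but since the envelope is characterized as the greatest (lower semicontinuous) level convex minorant, these one-sided inequalities combine with the representation in the correct direction, so continuity is genuinely dispensable.
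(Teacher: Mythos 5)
Your proof is correct. Note that the paper itself omits the proof of Theorem \ref{thm1.8}, saying only that it is ``in the same spirit of \cite[Theorem 4.98]{FL}'' for the convex setting; your argument --- rerunning the compactness/attainment step from the proof of Theorem \ref{Caratheodory for f lc} with lower semicontinuity and the stronger (superlinear) growth replacing continuity, and then identifying $f^{\rm lslc}$, $\mathrm{lsc}(f^{\rm lc})$ and $(\mathrm{lsc}f)^{\rm lc}$ via Proposition \ref{level convexity of lsc f} and the extremal definitions of the envelopes --- is exactly the adaptation the paper alludes to, and its key points (the objective, not the constraint, bounds every $|\xi_i^\nu|$; only one-sided inequalities are needed from lower semicontinuity; $\mathrm{lsc}f$ inherits boundedness below and superlinearity) all check out.
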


\begin{remark}
We observe that, in comparison with Theorem \ref{Caratheodory for f lc},  there is no continuity hypothesis in Theorem \ref{thm1.8}, but the growth hypothesis at infinity is stronger than the one considered in Theorem \ref{Caratheodory for f lc}.
\end{remark}

\subsection{Strict level convexity}\label{Strict level convexity}
Next we introduce the notion of \emph{ strict level convexity} that we will relate later with uniqueness of solutions to minimum problems.

\begin{definition}\label{sLC}
A level convex function $f:\mathbb R^n \to [-\infty;+ \infty]$ is said to be {\it strictly level convex} if
\begin{equation}\nonumber
f(t\xi+ (1-t)\eta)< \max\{f(\xi), f(\eta)\},
\end{equation}
for every $t \in (0,1)$ and every $\xi,\eta \in \mathbb R^n$, $\xi \not = \eta$.
\end{definition}

The above definition can be given also if $f$ is defined on a convex subset of $\mathbb R^n$.

Clearly Definition \ref{sLC} is more stringent than level convexity.
In the remaining part of this subsection we give a characterization of strict level convexity, known in Convex Analysis, and that will be exploited in the sequel. We also introduce weaker conditions than strict level convexity.

We begin recalling some definitions and some results.

\begin{definition}\label{DefC.20Grippo}
 (i) A set $C$ is said to be strictly convex if for every $x,y \in \partial C$ with $x \not =y$, for every $z:= tx+ (1-t)y$, $t \in (0,1)$, $z \in {\rm int}(C)$.

(ii) A point $x$ in a convex set $C \subset \mathbb R^n$ is an extreme point   of $C$ if and only if there exists no points $y,z \in C$, both distinct from $x$ such that $x= (1-t)y + tz$ for some $t \in (0,1)$.
The set of extreme points of $C$ is called  the profile of $C$ and is denoted by $Ext(C)$.

(iii) A point $x$ in a convex set $C \subset \mathbb R^n$ is an exposed point of $C$ if and only if there exists a supporting hyperplane $H$, such that $H \cap C= \{x\}$. The set of exposed points of a convex set $C$ is denoted by $Exp(C)$.
\end{definition}

\begin{remark}\label{moreonCAandSLC}
(i) Of course $Exp(C)\subset Ext(C)$ for any convex set $C\subset\mathbb R^n$. We observe however that there may exist extreme points which are not exposed, even if, cf. \cite[Theorem 18.6]{Rockafellar}, the set $Exp(C)$  is dense in $Ext(C)$. Consider, for example, the set $$\left\{(x,y)\in\mathbb{R}^2:\ x^2+y^2\le 1\right\}\cup \left( [0,1]\times [-1,1]\right).$$

(ii) When a convex set $C$ is closed, strict convexity of $C$ is equivalent to the condition that every boundary point of $C$ is an extreme point of $C$.
\end{remark}

\begin{proposition}\label{Ex=Exp}
Let $C \subset \mathbb R^n$ be a  strictly convex set, then $Ext(C) =Exp(C)$.
\end{proposition}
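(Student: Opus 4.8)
The plan is to split the equality into the two inclusions $Exp(C)\subseteq Ext(C)$ and $Ext(C)\subseteq Exp(C)$. The first one is valid for every convex set and has already been recorded in Remark \ref{moreonCAandSLC}(i), so it carries no content here; the entire point of the proposition is the reverse inclusion $Ext(C)\subseteq Exp(C)$, and this is exactly where the hypothesis of strict convexity has to be used. So I would fix an arbitrary $x\in Ext(C)$ and aim to manufacture a supporting hyperplane of $C$ whose intersection with $C$ reduces to the single point $x$.

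First I would observe that an extreme point is necessarily a boundary point, i.e. $x\in\partial C$: if instead $x\in {\rm int}(C)$, then for any direction $v$ and small $\varepsilon>0$ the points $x\pm\varepsilon v$ lie in $C$, and $x=\tfrac12(x+\varepsilon v)+\tfrac12(x-\varepsilon v)$ exhibits $x$ as a nontrivial convex combination of two distinct points of $C$, contradicting extremality. Once $x\in\partial C$ is established, the supporting hyperplane theorem for convex sets provides a hyperplane $H$ supporting $C$ at $x$, meaning $x\in H$ and $C$ is contained in one of the two closed half-spaces bounded by $H$; in particular $H\cap {\rm int}(C)=\emptyset$.

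Then I would prove $H\cap C=\{x\}$ by contradiction. Suppose there were a point $y\in H\cap C$ with $y\neq x$. Any point of $H\cap C$ must lie on $\partial C$, for if it were interior a small ball around it contained in $C$ would meet both open half-spaces determined by $H$, contradicting the fact that $C$ lies on one side of $H$. Hence $x,y\in\partial C$ with $x\neq y$, and the strict convexity of $C$ forces the open segment $(x,y)$ to be contained in ${\rm int}(C)$. On the other hand $H$, being a hyperplane and thus convex, contains $x$ and $y$ and therefore the whole segment $[x,y]$, so $(x,y)\subseteq H\cap {\rm int}(C)=\emptyset$, which is absurd. This contradiction yields $H\cap C=\{x\}$, so $x$ is an exposed point and $x\in Exp(C)$, completing the reverse inclusion and hence the proposition.

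The step I expect to be the delicate one is not any deep estimate but the correct bookkeeping around the supporting hyperplane: one must invoke the supporting hyperplane theorem at the boundary point $x$ and then verify that every contact point of $H$ with $C$ lies on $\partial C$, since only then is strict convexity applicable to the pair $x,y$. A minor point to dispose of is the degenerate case ${\rm int}(C)=\emptyset$: there the defining condition of strict convexity (every open segment between distinct boundary points lies in the empty interior) forces $C$ to contain no two distinct points, so $C$ is a single point or empty and the conclusion $Ext(C)=Exp(C)$ holds trivially, requiring no separate argument.
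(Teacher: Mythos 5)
Your proof is correct and follows essentially the same route as the paper's: take an extreme point, note it lies on $\partial C$, pick a supporting hyperplane $H$ there, and derive a contradiction from a second contact point $y\in H\cap C$ by observing that the segment $[x,y]$ would have to lie both in ${\rm int}(C)$ (by strict convexity) and in $H$, which is disjoint from the interior. The only differences are presentational: the paper phrases the whole argument as a contradiction with ``$x$ not exposed'' and cites Rockafellar for the fact that $H\cap C\subset\partial C$ contains the segment, whereas you verify these small facts directly and also dispose of the degenerate case ${\rm int}(C)=\emptyset$ explicitly.
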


\begin{remark}\label{counterexampletoEx=Exp}
The converse of Proposition \ref{Ex=Exp} is false as one can easily see considering the set $Q= [0,1]^2$ in $\mathbb R^2$, where $Ext(Q)=\{(0,0), (0,1), (1,0), (1,1)\}=Exp(Q)$ and $Q$ is not strictly convex.
\end{remark}

\begin{proof}[Proof]
Let $x \in Ext(C)$, and assume by contradiction that $x$ is not exposed. Since $x$ is not exposed, for every supporting hyperplane $H$ at $x$, it results that there exists $y\not=x$, $y \in H \cap C$. Clearly $y \in \partial C$, and $ty +(1-t)x \in H \cap C \subset \partial C$, $t \in (0,1)$  (see for instance \cite[Corollary 18.1.3]{Rockafellar}) and this contradicts the strict convexity of $C$.
\end{proof}

Now we introduce a notation.

\begin{notation}\label{CArecalls}
Let $f:\mathbb R^n \to [-\infty,+ \infty]$ and $c \in \mathbb [-\infty,+ \infty]$. We define the set  $$R_c(f): = \{x\in\mathbb{R}^n: f(x)= c\}.$$
\end{notation}

The proof of the following result can be found in \cite[Theorem 4.3]{Danao}.

\begin{theorem}\label{Danao4.3}
Let $f$ be a real valued function defined on a convex set $C $ in $\mathbb R^n$. The function $f$ is strictly level convex if and only if for every $c$ in the range of $f$ the following conditions are verified:
\begin{itemize}
\item[(i)] $L_c(f) $ is convex,
\item[(ii)] $R_c (f) \subseteq {\rm Ext}(L_c(f))$.
\end{itemize}
\end{theorem}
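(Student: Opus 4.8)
The plan is to prove the two implications separately, translating the strict inequality in Definition \ref{sLC} into a statement about extreme points of the sublevel sets $L_c(f)$, so that the whole argument becomes a dictionary between the two formulations.

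For the direct implication, I would assume $f$ is strictly level convex. Since strict level convexity forces ordinary level convexity, condition (i) is immediate: the sublevel sets of a level convex function are convex. To obtain (ii), fix $c$ in the range of $f$ and take $x\in R_c(f)$, so that $f(x)=c$. Arguing by contradiction, if $x$ were not an extreme point of $L_c(f)$, there would exist $y,z\in L_c(f)$, both distinct from $x$, and $s\in(0,1)$ with $x=(1-s)y+sz$. Necessarily $y\neq z$, since otherwise $x=y=z$, contradicting the distinctness. Applying strict level convexity to the distinct points $y,z$ then gives $f(x)<\max\{f(y),f(z)\}\le c$, contradicting $f(x)=c$. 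Hence $x\in{\rm Ext}(L_c(f))$, which is (ii).

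For the converse, I would assume (i) and (ii) hold for every $c$ in the range of $f$. Given $\xi\neq\eta$ and $t\in(0,1)$, set $x:=t\xi+(1-t)\eta$ and $c:=\max\{f(\xi),f(\eta)\}$, which lies in the range of $f$. Since $\xi,\eta\in L_c(f)$ and $L_c(f)$ is convex by (i), it follows that $x\in L_c(f)$, i.e. $f(x)\le c$; together with the trivial case $\xi=\eta$ this already yields level convexity of $f$. It remains to upgrade the inequality to a strict one. Suppose instead $f(x)=c$, so that $x\in R_c(f)$. Because $\xi\neq\eta$ and $t\in(0,1)$, the point $x$ is distinct from both $\xi$ and $\eta$, so the representation $x=(1-t)\eta+t\xi$ exhibits $x$ as a nontrivial convex combination of two points of $L_c(f)$ distinct from $x$; thus $x\notin{\rm Ext}(L_c(f))$, contradicting (ii). Therefore $f(x)<c$, which is precisely strict level convexity.

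No single step is genuinely hard here, as the equivalence is essentially bookkeeping; the points requiring care are exactly those in the definition of an extreme point. In the direct implication one must rule out the degenerate case $y=z$ before invoking the strict inequality, which is only available for distinct arguments; in the converse one must verify that $x$ is different from both $\xi$ and $\eta$ so that it genuinely fails to be extreme. One should also keep track throughout that $c$ is chosen in the range of $f$, so that conditions (i) and (ii) apply to the relevant level sets. As a side remark, the convexity asserted in (i) can in fact be recovered from (ii) alone by the same contradiction run at the level $c'=f(x)$, so (ii) already encodes level convexity; I would nonetheless keep both conditions explicit to mirror the statement.
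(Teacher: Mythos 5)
Your proof is correct. Note that the paper does not actually prove Theorem \ref{Danao4.3} at all---it defers to \cite[Theorem 4.3]{Danao}---so there is no internal argument to compare against; your two-way unwinding of Definition \ref{sLC} against the definition of extreme point is the standard (and essentially unique) route, and both implications, including the degenerate-case checks ($y=z$ in one direction, $x\neq\xi,\eta$ in the other), are handled properly. One small caveat on your closing side remark: deducing (i) from (ii) alone requires applying the notion of extreme point to a level set $L_{c'}(f)$ not yet known to be convex, whereas Definition \ref{DefC.20Grippo} only defines ${\rm Ext}$ for convex sets; since you keep (i) as a standing hypothesis in the actual proof, this looseness has no bearing on its validity.
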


The previous characterization doesn't ensure the strict convexity of $L_c(f)$. To get this property we need to assume the continuity of $f$.

\begin{proposition}\label{strictlevelconvexityandcontinuity}
Let $f$ be a real valued function defined on a convex set $C$ in $\mathbb R^n$. If $f$ is strictly level convex and continuous then for every $c$ in the range of $f$, $L_c(f)$ is closed and strictly convex for every $c \in \mathbb R$.
\end{proposition}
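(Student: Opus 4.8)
The plan is to prove the two conclusions separately. Closedness is the easy half: since $f$ is continuous on $C$, every sublevel set $L_c(f)=\{x\in C:\ f(x)\le c\}$ is the preimage of a closed half-line, hence closed; this needs no convexity and holds for all $c\in\mathbb{R}$. It remains to establish strict convexity of $L_c(f)$ for $c$ in the range of $f$, and I would verify Definition \ref{DefC.20Grippo}(i) directly.

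The crucial preliminary observation is that, by continuity, $\partial L_c(f)\subseteq R_c(f)$. Indeed, the set $\{x:\ f(x)<c\}$ is open and contained in $L_c(f)$, so it lies in ${\rm int}\,L_c(f)$; consequently any $x\in\partial L_c(f)$ must satisfy $f(x)=c$ (it cannot have $f(x)<c$, as that would place it in the interior, nor $f(x)>c$, since $L_c(f)$ is closed and contains $x$). With this in hand, take two distinct boundary points $x,y\in\partial L_c(f)$ and set $z=tx+(1-t)y$ with $t\in(0,1)$. Since $f(x)=f(y)=c$, strict level convexity (Definition \ref{sLC}) yields $f(z)<\max\{f(x),f(y)\}=c$, so $z\in\{f<c\}\subseteq{\rm int}\,L_c(f)$. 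This is exactly the defining property of a strictly convex set, whence $L_c(f)$ is strictly convex. (When $L_c(f)$ reduces to a single point or is empty the condition holds vacuously.)

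An equivalent route, matching the tools just introduced, is to combine Theorem \ref{Danao4.3}(ii), which gives $R_c(f)\subseteq{\rm Ext}(L_c(f))$, with the inclusion $\partial L_c(f)\subseteq R_c(f)$ above to conclude that every boundary point of $L_c(f)$ is an extreme point; since $L_c(f)$ is closed and convex, Remark \ref{moreonCAandSLC}(ii) then identifies this condition with strict convexity. Either way the single substantive step is the identification $\partial L_c(f)\subseteq R_c(f)$, which is where continuity is essential: the characterization of Theorem \ref{Danao4.3} alone guarantees only that $R_c(f)$ consists of extreme points, not that the whole topological boundary does, and without continuity a boundary point of $L_c(f)$ could fail to attain the value $c$.

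The main obstacle, then, is purely the interface between the combinatorial notion (extreme points) and the topological one (boundary and interior): one must be careful that the interior in Definition \ref{DefC.20Grippo}(i) is taken in the ambient space and that $\{f<c\}$ genuinely lands in that interior, which is precisely what continuity of $f$ secures. If $C$ is not the whole of $\mathbb{R}^n$ one should read ``interior'' relative to the affine hull of $C$ throughout; the argument is otherwise unchanged.
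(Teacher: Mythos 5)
Your proposal is correct and takes essentially the same approach as the paper: the decisive step in both is the inclusion $\partial L_c(f)\subseteq R_c(f)$ obtained from continuity exactly as you argue, and your ``equivalent route'' (combining Theorem \ref{Danao4.3}(ii) with Remark \ref{moreonCAandSLC}(ii)) is precisely the paper's proof. Your direct verification of Definition \ref{DefC.20Grippo}(i) via strict level convexity is just a cosmetic variant that bypasses the citation of Theorem \ref{Danao4.3}.
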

\begin{proof}[Proof]
The closedness of $L_c(f)$ is a consequence of the lower semicontinuity of $f$. By the level convexity of $f$ follows that $L_c(f)$ is convex. It remains to prove its strict convexity.

To this end, by Remark \ref{moreonCAandSLC} (ii), it will be enough to show that $\partial L_c(f)\subseteq Ext(L_c(f))$. By Theorem \ref{Danao4.3} it is known that $R_c(f)\subseteq Ext(L_c(f))$. Thus it will suffice to prove that
$\partial L_c(f)\subseteq R_c(f)$.

Let $y \in \partial L_c (f)$. Then, by the continuity of $f$, $f(y)\le c$. Assume $y\notin R_c(f)$, that is $f(y)< c$. Again, the continuity of $f$ would imply that $y \in {\rm int}(L_c(f))$ which is clearly a contradiction. That concludes the proof.
\end{proof}

Next we give several different characterizations of strict level convexity.

\begin{proposition}\label{carastrictlc}
Let $f:\mathbb R^n \to (-\infty,+ \infty]$ be a level convex function. Then $f$ is strictly level convex if and only if one of the following conditions is satisfied.\medskip

(i) $f(t\xi+ (1-t)\eta)= \max\{f(\xi), f(\eta)\}$ for some $t \in (0,1)$ implies $\xi=\eta$.\medskip

(ii) $\displaystyle f\left(\xi+ \frac{1}{2}\eta\right) = \max\left\{f(\xi),f(\xi+\eta)\right\}$
implies $\eta=0$.\medskip

(iii) In the case $f$ is a Borel measurable and finite function, $$\displaystyle{f\left(\int_\Omega \varphi\,d \mu\right) < \mu-\operatorname*{ess\,sup}_{x\in\Omega} f(\varphi(x))},$$
for every probability measure $\mu$ on $\mathbb{R}^n$ supported in the open set $\Omega\subset\mathbb{R}^d$ and every  nonconstant $\varphi \in L^1_\mu(\Omega;\mathbb{R}^n)$.
\end{proposition}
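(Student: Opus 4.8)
The plan is to prove the chain of equivalences by verifying separately that each of (i), (ii), (iii) is equivalent to strict level convexity, always keeping in mind the standing assumption that $f$ is level convex.

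First I would dispatch the equivalence with (i), which is essentially the contrapositive of Definition \ref{sLC}: if $f$ is strictly level convex and $f(t\xi+(1-t)\eta)=\max\{f(\xi),f(\eta)\}$ for some $t\in(0,1)$, then $\xi\neq\eta$ is impossible, giving (i); conversely, assuming (i) and fixing $\xi\neq\eta$, $t\in(0,1)$, level convexity gives the inequality $\leq$ while (i) forbids equality, so the inequality is strict for every such $t$. I expect this step to be immediate.

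For (i)$\Leftrightarrow$(ii), the implication (i)$\Rightarrow$(ii) follows by taking $t=\tfrac12$ after the reparametrization $(\xi,\eta)\mapsto(\xi,\xi+\eta)$. The substantive direction is (ii)$\Rightarrow$(i). Aiming at a contradiction with (i), suppose $f(t\xi+(1-t)\eta)=m:=\max\{f(\xi),f(\eta)\}$ for some $\xi\neq\eta$ and $t\in(0,1)$. I would restrict $f$ to the segment by setting $g(s):=f(s\xi+(1-s)\eta)$ for $s\in[0,1]$; a direct check shows $g$ is level convex in one variable, so every sublevel set $\{g\leq c\}$ is an interval and hence the strict sublevel set $\{g<m\}=\bigcup_{c<m}\{g\leq c\}$ is an interval $J\subseteq[0,1]$. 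Therefore $K:=\{g=m\}=[0,1]\setminus J$ has at most two connected components, each abutting an endpoint of $[0,1]$. Since $t\in K$, the component of $K$ containing $t$ also contains $0$ or $1$ and $g\equiv m$ on it, so $g$ attains the value $m$ at a midpoint of two distinct points of the segment (for instance at $s=t/2$, the midpoint of $s=0$ and $s=t$), contradicting (ii). The point requiring care here is this `flatness propagation', i.e. that equality at an interior point forces $g\equiv m$ on a subinterval reaching an endpoint, which is exactly the one-dimensional structure of level convex functions.

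Finally, for the equivalence with (iii) (under the extra hypothesis that $f$ is Borel measurable and finite), the easy direction is strict Jensen $\Rightarrow$ strict level convexity: given $\xi\neq\eta$ and $t\in(0,1)$, take $\mu$ to be Lebesgue measure on $(0,1)$ and $\varphi$ equal to $\xi$ on $(0,t)$ and to $\eta$ on $(t,1)$; then $\int\varphi\,d\mu=t\xi+(1-t)\eta$, the essential supremum of $f(\varphi)$ equals $\max\{f(\xi),f(\eta)\}$, and $\varphi$ is nonconstant, so the strict inequality of (iii) yields strict level convexity. The hard part, and the crux of the whole proposition, is strict level convexity $\Rightarrow$ strict Jensen. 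Fixing a probability measure $\mu$ and a nonconstant $\varphi\in L^1_\mu$, I would write $b=\int\varphi\,d\mu$ and $M=\mu\text{-}\operatorname*{ess\,sup}_{x} f(\varphi(x))$; Theorem \ref{Jensensupremalscalar} already gives $f(b)\leq M$, and the task is to exclude $f(b)=M$. Assuming $f(b)=M$, since $\varphi$ is nonconstant some coordinate $\varphi_i$ is nonconstant, so for $v=e_i$ both $A^+=\{\langle\varphi-b,v\rangle>0\}$ and its complement have positive $\mu$-measure. I would then split $\mu$ by this hyperplane through $b$ and form the two conditional barycenters $p=\frac{1}{\mu(A^+)}\int_{A^+}\varphi\,d\mu$ and $q=\frac{1}{1-\mu(A^+)}\int_{(A^+)^c}\varphi\,d\mu$, so that $b=sp+(1-s)q$ with $s=\mu(A^+)\in(0,1)$ and $p\neq q$ (they lie on opposite sides of $\langle\cdot,v\rangle=\langle b,v\rangle$). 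Applying the non-strict supremal Jensen inequality of Theorem \ref{Jensensupremalscalar} to the normalized restrictions $\mu|_{A^+}/\mu(A^+)$ and $\mu|_{(A^+)^c}/\mu((A^+)^c)$ shows $f(p)\leq M$ and $f(q)\leq M$; strict level convexity with $p\neq q$ then forces $f(b)=f(sp+(1-s)q)<\max\{f(p),f(q)\}\leq M$, contradicting $f(b)=M$, whence $f(b)<M$. The key idea, and the main obstacle to get right, is precisely this splitting of $\mu$ along a hyperplane through the barycenter: it produces two distinct conditional barycenters, each of which the non-strict Jensen inequality keeps inside the sublevel set $L_M(f)$, which is exactly what lets strict level convexity be used. (Alternatively one could route this through the extreme-point characterization of Theorem \ref{Danao4.3}, but the barycenter splitting avoids any closedness issue for $L_M(f)$.)
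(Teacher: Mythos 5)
Your proof is correct. The equivalence with (i) and the implication (i) $\Rightarrow$ (ii) are exactly as in the paper, and your proof of (ii) $\Rightarrow$ (i) via the interval structure of the sublevel sets of $g(s)=f(s\xi+(1-s)\eta)$ is essentially a repackaging of the paper's case analysis ($f(\xi)=f(\eta)$ versus $f(\xi)\neq f(\eta)$): both rest on the same one-dimensional flatness propagation forcing $f$ to be constant on a half-segment. The genuine divergence is in the crux, strict level convexity $\Rightarrow$ (iii). The paper argues by contradiction through its extreme-point machinery: if $f(b)=M$ with $b=\int_\Omega\varphi\,d\mu$, then $b\in {\rm Ext}(L_M(f))$ by Theorem \ref{Danao4.3}; on the positive-measure set $E=\{\varphi\neq b\}$ the function takes values a.e. in $K'=L_M(f)\setminus\{b\}$, which is still convex because an extreme point was removed, yet the normalized barycenter of $\varphi$ over $E$ equals $b$ and should therefore lie in $K'$ --- a contradiction. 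You instead split $\mu$ along the hyperplane $\{\langle\cdot-b,e_i\rangle=0\}$ for a nonconstant coordinate $\varphi_i$, obtain two distinct conditional barycenters $p\neq q$ with $b=sp+(1-s)q$, $s\in(0,1)$, bound $f(p),f(q)\le M$ by the non-strict Jensen inequality (Theorem \ref{Jensensupremalscalar}) applied to the normalized restrictions of $\mu$, and conclude with a single application of strict level convexity. The trade-off: the paper's route reuses Theorem \ref{Danao4.3} but tacitly invokes the nontrivial fact that the barycenter of a probability measure supported in a convex, not necessarily closed, subset of $\mathbb{R}^n$ lies in that set (this is what places $\frac{1}{\mu(E)}\int_E\varphi\,d\mu$ in $K'$, and itself requires a supporting-hyperplane induction); your hyperplane splitting avoids this entirely, using only the already-available non-strict Jensen inequality, so it is more elementary and self-contained, and, as you note, it sidesteps any closedness issue for $L_M(f)$.
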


\begin{remark}\label{weakMqcx}
(i) In condition (ii), the value $1/2$ can be replaced by any $t\in (0,1)$.

(ii) By Proposition \ref{carastrictlc} (iii), we observe that if a Borel measurable function $f$ is strictly level convex then whenever \begin{equation}\label{swMqcx}
\displaystyle{\operatorname*{ess\,sup}_{x\in\Omega} f(\xi_0 + D \varphi(x))= f(\xi_0)},
\end{equation}
for some $\varphi \in W^{1,\infty}_0(\Omega;\mathbb{R}^m)$, $\varphi$ is necessarily $0$. Notice that we include both the scalar and the vectorial case in this assertion.
\end{remark}

\begin{proof}
It is clear that strict level convexity is equivalent to (i). Now we prove that (i) and (ii) are also equivalent. If we assume (i) is true, then it suffices to consider vectors $\xi$ and $\xi+\eta$, the convex combination $\frac{1}{2}\xi+\frac{1}{2} (\xi+\eta)$ and apply (i) with $t=\frac{1}{2}$. Conversely, assume (ii) and let $\xi$ and $\eta$ be such that $f(t\xi+(1-t)\eta)=\max\{f(\xi),f(\eta)\}$ for some $t\in (0,1)$. Suppose, by contradiction, that $\xi\neq\eta$. If $f(\xi)=f(\eta)$ then, since $f$ is level convex, either $f\equiv f(\xi)$ in the segment $[\xi, t\xi+(1-t)\eta]$ or in the segment $[t\xi+(1-t)\eta, \eta]$. But this contradicts (ii). If $f(\xi)\neq f(\eta)$, without loss of generality, we can assume $f(\eta)<f(\xi)$. Then $f\equiv f(\xi)$ in the segment $[\xi, t\xi+(1-t)\eta]$. Indeed, if $f(\zeta)<f(\xi)$ for some $\zeta\in (\xi, t\xi+(1-t)\eta)$ then,
$$f(t\xi+(1-t)\eta)\le \max\{f(\zeta), f(\eta)\}<\max \{f(\xi),f(\eta)\}$$ which is a contradiction. But $f\equiv f(\xi)$ in $[\xi, t\xi+(1-t)\eta]$ also contradicts $(ii)$. This finishes the proof of the equivalence between (i) and (ii).

Concerning condition (iii) first we observe that if $f$ satisfies (iii), then for every $t \in (0,1)$ we can take a function $\varphi$ with value $\xi$ on a set of $\mu $-measure $t$ and $\eta$ ($\eta \not = \xi$) on a set of $\mu$-measure $(1-t)$ to get $f(t\xi+(1-t)\eta)<\max\{f(\xi), f(\eta)\}.$

To prove the viceversa, we argue by contradiction. Assume $f$ is strictly level convex and there exists a nonconstant function $\varphi \in L^1_\mu(\Omega;\mathbb{R}^n)$ such that
$$\displaystyle{f\left(\int_\Omega \varphi\,d \mu\right) =\mu-\operatorname*{ess\,sup}_{x\in\Omega} f(\varphi(x))}= c.$$
If $K = \{\zeta : f(\zeta)\leq c\}$, i.e. $K= L_c(f)$, the level convexity of $f$ guarantees that $K$ is convex and in particular $f(\varphi (x))\leq c$ for $\mu$-a.e. $x \in \Omega$, i.e. $\varphi(x) \in K$ for $\mu$-a.e. $x \in \Omega$. Observe that by Theorem \ref{Danao4.3}, since $\int_\Omega \varphi\, d \mu \in R_c(f)$,  $\int_\Omega \varphi\, d \mu \in Ext L_c (f)$.

 Now if $E= \{x: \varphi (x) \not = \int_\Omega \varphi\, d \mu\}$, since $\varphi$ is assumed nonconstant, $\mu(E)>0$. Moreover, observe that $\frac{1}{\mu(E)}\int_E \varphi\, d \mu =\int_\Omega \varphi \, d\mu$. Clearly for a.e. $x \in E: \varphi(x) \in K':= L_c(f) \setminus \{\int_\Omega \varphi\, d \mu\}$, which  is still a convex set (because it is a convex without an extreme point of it, cf. Theorem \ref{Danao4.3}).
Since $K'$ is convex and $\varphi(x)\in K'$ for a.e. $x \in E$, it results that $\frac{1}{\mu(E)}\int_E \varphi\, d \mu \in K' $,  and this is obviously a contradiction.
\end{proof}


In the remainder of this section we will investigate weaker conditions than strict level convexity.

\begin{definition}\label{slcxi0}
A level convex function $f:\mathbb R^n \to \mathbb R$ is said to be strictly level convex at
$\xi_0\in\mathbb{R}^n$ if for every $t \in (0,1)$ and for every $\xi \not = \eta$: $\xi_0= t\xi + (1-t)\eta$ $\Longrightarrow f(\xi_0) < \max \{f(\xi), f(\eta)\}$.
\end{definition}

With the following result, we introduce a stronger notion than the one in Definition \ref{slcxi0}.

\begin{proposition}\label{endtowardsmid}
Let $f:\mathbb R^n \to \mathbb R$ be a level convex function and let $\xi_0\in\mathbb{R}^n$. Assume that
\begin{equation}\label{strictlevelconvexendpoint}
\text{for every }t \in (0,1)\text{ and for every }\xi\not=\xi_0,\ f(t \xi_0+ (1-t)\xi) < \max \{f(\xi_0), f(\xi)\}.
\end{equation}

Then  $f$ is strictly level convex at $\xi_0$. 
\end{proposition}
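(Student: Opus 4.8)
The plan is to argue by contradiction and to reduce Definition \ref{slcxi0} to the endpoint hypothesis \eqref{strictlevelconvexendpoint} using only level convexity. So I would start from an arbitrary representation $\xi_0 = t\xi + (1-t)\eta$ with $t \in (0,1)$ and $\xi \neq \eta$, and aim to show $f(\xi_0) < \max\{f(\xi), f(\eta)\}$. Before anything else I would record two elementary facts. First, both endpoints differ from $\xi_0$: if $\xi = \xi_0$, then $\xi_0 = t\xi_0 + (1-t)\eta$ forces $\eta = \xi_0 = \xi$, contradicting $\xi \neq \eta$, and symmetrically for $\eta$. Second, from the identities $\xi - \xi_0 = (1-t)(\xi - \eta)$ and $\eta - \xi_0 = -t(\xi - \eta)$ one reads off that $\xi_0$ lies strictly between $\xi$ and $\eta$ and that the two directions $\xi - \xi_0$ and $\eta - \xi_0$ point opposite ways along the line; here $\xi \neq \eta$ is what makes $\xi - \eta \neq 0$.

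Since $f$ is level convex, $f(\xi_0) \le \max\{f(\xi), f(\eta)\} =: M$ holds automatically, so it suffices to rule out equality. Assume for contradiction $f(\xi_0) = M$. The heart of the argument is to select interior points on the two sub-segments emanating from $\xi_0$: pick $p = s\xi_0 + (1-s)\xi$ with $s \in (0,1)$ and, likewise, $q$ in the open segment from $\xi_0$ to $\eta$. Because $\xi \neq \xi_0$ and $\eta \neq \xi_0$, hypothesis \eqref{strictlevelconvexendpoint} applies to each, giving $f(p) < \max\{f(\xi_0), f(\xi)\}$ and $f(q) < \max\{f(\xi_0), f(\eta)\}$. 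As $f(\xi_0) = M$ while $f(\xi) \le M$ and $f(\eta) \le M$, both right-hand maxima equal $M$, so $f(p) < M$ and $f(q) < M$. By the opposite-direction observation, $\xi_0$ is a nontrivial convex combination of $p$ and $q$, whence level convexity yields $f(\xi_0) \le \max\{f(p), f(q)\} < M$, contradicting $f(\xi_0) = M$. This forces $f(\xi_0) < M$, which is exactly strict level convexity at $\xi_0$.

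The only step requiring genuine care is the claim that $\xi_0$ truly sits strictly between the chosen points $p$ and $q$, so that level convexity can be invoked with a coefficient in $(0,1)$; this is precisely what the computation $\xi - \xi_0 = (1-t)(\xi - \eta)$ and $\eta - \xi_0 = -t(\xi - \eta)$ secures, since it shows $p$ and $q$ lie on opposite sides of $\xi_0$ along a common line. Everything else is routine bookkeeping of the inequalities, and notably no case distinction on whether $f(\xi) = f(\eta)$ is needed, because the endpoint hypothesis is applied on each side independently.
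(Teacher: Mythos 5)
Your proof is correct and follows essentially the same route as the paper's: both arguments select interior points on the two sub-segments $(\xi_0,\xi)$ and $(\xi_0,\eta)$, apply the endpoint hypothesis \eqref{strictlevelconvexendpoint} to each, and then invoke level convexity at $\xi_0$, which lies strictly between the two chosen points. The only differences are cosmetic: you argue by contradiction (assuming $f(\xi_0)=\max\{f(\xi),f(\eta)\}$, which makes the right-hand maxima collapse to a single value), whereas the paper concludes directly from $f(\xi_0)<\max\{f(\xi),f(\eta),f(\xi_0)\}$; and you explicitly verify $\xi\neq\xi_0$, $\eta\neq\xi_0$ before applying the hypothesis, a small check the paper leaves implicit.
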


\begin{remark}
The reverse implication of Proposition \ref{endtowardsmid}
is not true, to this end it is enough to consider the function $$\xi \in \mathbb R \to f(\xi):= \left\{
\begin{array}{ll}
-\xi & \hbox{ if }\ \xi \leq 0,\\
0 &\hbox{ if }\ \xi >0,
\end{array}
\right.$$
which is strictly level convex at $0$, but it doesn't satisfy condition \eqref{strictlevelconvexendpoint} with $\xi_0=0$.
\end{remark}

\begin{proof}[Proof]
Let $\xi_0= t \xi + (1-t)\eta$ for $t \in (0,1)$ and $\xi, \eta \in \mathbb R^n$.
Let $\theta$ and $\zeta$ be in the segments $(\xi, \xi_0)$ and $(\xi_0, \eta)$ respectively. By \eqref{strictlevelconvexendpoint}
$f(\theta)< \max\{f(\xi_0), f(\xi)\}$ and $f(\zeta)< \max \{f(\xi_0), f(\eta)\}.$
Observe that $\xi_0\in(\theta,\zeta)$.
From the level convexity of $f$ and the previous inequalities it results
$$f(\xi_0)\leq \max\{f(\theta), f(\zeta)\}< \max\{f(\xi), f(\eta), f(\xi_0)\}.$$
Therefore $\max\{f(\xi), f(\eta), f(\xi_0)\}=\max\{f(\xi), f(\eta)\}$ and thus
$$f(\xi_0) <\max \{f(\eta), f(\xi)\}$$
which concludes the proof.
\end{proof}

\begin{proposition}\label{Extxi0slc}
A level convex function $f:\mathbb R^n \to \mathbb R$ is strictly level convex at $\xi_0$ if and only if $\xi_0 \in {\rm Ext}L_{f(\xi_0)}(f)$.
\end{proposition}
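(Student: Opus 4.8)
The plan is to prove the equivalence in Proposition \ref{Extxi0slc} by unwinding both notions into statements about convex combinations within the level set $L_{f(\xi_0)}(f)$, and in each direction arguing by contraposition. Throughout write $c:=f(\xi_0)$ and $L:=L_c(f)$, which is convex by level convexity of $f$; note that $\xi_0\in L$ and in fact $\xi_0\in R_c(f)$.

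First I would establish the forward implication (strict level convexity at $\xi_0$ $\Rightarrow$ $\xi_0\in\mathrm{Ext}(L)$) by contraposition. Suppose $\xi_0$ is not an extreme point of $L$. By Definition \ref{DefC.20Grippo}(ii) there exist $\xi,\eta\in L$, both distinct from $\xi_0$, and $t\in(0,1)$ with $\xi_0=t\eta+(1-t)\xi$. Since $\xi,\eta\in L=L_c(f)$ we have $f(\xi)\le c$ and $f(\eta)\le c$, so $\max\{f(\xi),f(\eta)\}\le c=f(\xi_0)$. Thus $f(\xi_0)\ge\max\{f(\xi),f(\eta)\}$, and since $\xi\neq\eta$ (they are distinct points whose combination gives $\xi_0$, and if $\xi=\eta$ then $\xi_0=\xi=\eta$, contradicting $\xi\neq\xi_0$) this violates the strict inequality required in Definition \ref{slcxi0}. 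Hence $f$ is not strictly level convex at $\xi_0$, proving the contrapositive.

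For the reverse implication ($\xi_0\in\mathrm{Ext}(L)$ $\Rightarrow$ strict level convexity at $\xi_0$), again argue by contraposition: assume $f$ is not strictly level convex at $\xi_0$, so there exist $t\in(0,1)$ and $\xi\neq\eta$ with $\xi_0=t\xi+(1-t)\eta$ and $f(\xi_0)\ge\max\{f(\xi),f(\eta)\}$. The inequality $f(\xi_0)\ge\max\{f(\xi),f(\eta)\}$ forces $f(\xi)\le c$ and $f(\eta)\le c$, i.e. $\xi,\eta\in L$. Now I must produce a representation of $\xi_0$ as a proper convex combination of two points of $L$ that are both distinct from $\xi_0$, which is exactly the failure of extremality in Definition \ref{DefC.20Grippo}(ii). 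The subtle point here — which I expect to be the main obstacle — is that one of $\xi,\eta$ could coincide with $\xi_0$ even though $\xi\neq\eta$; for instance if $\eta=\xi_0$, then $\xi_0=t\xi+(1-t)\xi_0$ gives $\xi=\xi_0$, contradicting $\xi\neq\eta$, so in fact this degenerate case cannot occur. I would verify carefully that $\xi_0=t\xi+(1-t)\eta$ with $t\in(0,1)$ and $\xi\neq\eta$ already implies $\xi\neq\xi_0$ and $\eta\neq\xi_0$ (if either equalled $\xi_0$ the other would too, forcing $\xi=\eta$). Therefore $\xi,\eta\in L$ are genuinely distinct from $\xi_0$, so $\xi_0$ is not an extreme point of $L$, completing the contrapositive.

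The whole argument is essentially a bookkeeping translation between the max-inequality formulation of strict level convexity at a point and the convex-combination formulation of extremality, with the level set $L_{f(\xi_0)}(f)$ as the bridge; the only genuine care needed is the elementary verification that a nondegenerate convex combination equalling $\xi_0$ cannot have either endpoint equal to $\xi_0$, which rules out the spurious boundary case and lets the two definitions match up exactly.
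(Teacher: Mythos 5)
Your proof is correct and follows essentially the same route as the paper's: both directions are obtained by unwinding Definition \ref{slcxi0} and the extremality condition of Definition \ref{DefC.20Grippo}(ii), using the level set $L_{f(\xi_0)}(f)$ as the bridge (the paper phrases it as a double contradiction rather than contraposition, which is immaterial). Your explicit verification that $\xi\neq\eta$ is equivalent, for a proper convex combination yielding $\xi_0$, to both endpoints being distinct from $\xi_0$ is a point the paper passes over silently, but it does not change the argument.
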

\begin{proof}[Proof]
Arguing by contraddiction, assume that $f$ is strictly level convex at $\xi_0$ but $\xi_0  \not \in {\rm Ext}L_{f(\xi_0)}(f)$, namely there exist $\xi $ and $\eta \in L_{f(\xi_0)}(f)$ such that $\xi_0= t \xi+ (1-t)\eta$, $ t \in (0,1)$ and $\xi\neq\eta$, then $\max \{f(\xi), f(\eta)\} > f(\xi_0)$, and this contradicts the fact that $\xi, \eta \in L_{f(\xi_0)}(f)$.

Now we want to prove that if $\xi_0 \in {\rm Ext}L_{f(\xi_0)}(f)$, then $f$ is strict level convex at $\xi_0$. If this was not the case, there would exist $\xi$, $\eta \in \mathbb R^n$, with $\xi, \eta\not= \xi_0$ and $t \in (0,1)$ such that $\xi_0 = t \xi + (1-t)\eta$ and $f(\xi_0)= \max \{f(\xi), f(\eta)\}$ and so $\xi, \eta \in L_{f(\xi_0)}(f)$, which is a contradiction.\end{proof}




We finish this section with a notion, weaker than the strict level convexity at a point either in the sense of Definition \ref{slcxi0} or in the sense of \eqref{strictlevelconvexendpoint}. This will be useful to deal with the minimizing problems in Section \ref{NecSufCond}.

\begin{definition}\label{slc1Dmid}
A level convex function $f:\mathbb R^n \to \mathbb{R}$ is said to be {\it strictly level convex at $\xi_0\in\mathbb{R}^n$ in at least one direction} if there exists $\alpha\in\mathbb{R}^n\setminus \{0\}$ such that: if for some $\gamma$ and $\eta \in \mathbb R^n$
$$\left\{\begin{array}{l} \xi_0=t\gamma+ (1-t)\eta,\ t\in(0,1)\vspace{0.2cm}
\\
f(\xi_0)= \max\{f(\gamma), f(\eta)\} \end{array} \right.$$
then $$
<\gamma-\eta, \alpha>=0.
$$
\end{definition}

\begin{remark}\label{end=midonedir}
One could also give a definition of strict level convexity in at least one direction in the spirit of \eqref{strictlevelconvexendpoint}. Precisely
given $\xi_0\in\mathbb{R}^n$ there exists $\alpha\in\mathbb{R}^n\setminus \{0\}$ such that: if for some $\eta \in \mathbb R^n$

\begin{equation}\label{slcinatleastendpoint}
\left\{\begin{array}{l} \xi=t\xi_0+ (1-t)\eta,\ t\in(0,1)\vspace{0.2cm}\\
f(\xi)= \max\{f(\xi_0), f(\eta)\}\end{array}\right.
\end{equation}
then $$<\xi_0-\eta, \alpha>=0.$$

Then an argument very similar to that employed to prove  Proposition \ref{endtowardsmid} and the fact that $\xi_0, \gamma $ and $\eta$ of Definition \ref{slc1Dmid} are in the same line, guarantee that if $f$ satisfies \eqref{slcinatleastendpoint} at $\xi_0$ then it is also strictly level convex at $\xi_0$ in at least one direction.

On the other hand, the opposite implication is false, that is, there are strictly level convex functions at $\xi_0$ in at least one direction
but not in the sense of \eqref{slcinatleastendpoint}. To this end consider the function $f(\xi)={\rm dist}(\xi,\mathbb{R}^+\times \mathbb{R})$
defined for $\xi\in\mathbb{R}^2$ and take $\xi_0=(0,0)$. Notice also that, this function, although being strictly level convex at $\xi_0$
 in at least one direction, it is not strictly level convex at $\xi_0$.
\end{remark}



We have the following characterization of strict level convexity at a point in at least one direction.

\begin{proposition}\label{strictlcboundary}
A lower semicontinuous and level convex function $f:\mathbb R^n \to \mathbb R$ is strictly level convex at $\xi_0$ in at least one direction if and only if $\xi_0\in \partial L_{f(\xi_0)}(f)$.
\end{proposition}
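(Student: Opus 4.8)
The plan is to translate everything into a statement about the closed convex level set $K := L_{f(\xi_0)}(f) = \{\xi \in \mathbb{R}^n : f(\xi) \le f(\xi_0)\}$, which is convex because $f$ is level convex and closed because $f$ is lower semicontinuous, and which contains $\xi_0$. The observation driving both implications is that a pair $\gamma,\eta$ with $\xi_0 = t\gamma + (1-t)\eta$, $t\in(0,1)$, satisfies the hypothesis $f(\xi_0) = \max\{f(\gamma),f(\eta)\}$ of Definition \ref{slc1Dmid} if and only if $\gamma,\eta \in K$. Indeed, $f(\xi_0)=\max\{f(\gamma),f(\eta)\}$ forces $f(\gamma),f(\eta)\le f(\xi_0)$, hence $\gamma,\eta\in K$; conversely, if $\gamma,\eta\in K$ and $\xi_0=t\gamma+(1-t)\eta$, level convexity gives $f(\xi_0)\le\max\{f(\gamma),f(\eta)\}\le f(\xi_0)$, so equality holds. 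Thus being strictly level convex at $\xi_0$ in at least one direction means exactly that there is a fixed $\alpha\neq 0$ orthogonal to $\gamma-\eta$ for every ``tight'' segment of this type through $\xi_0$ (the degenerate case $\gamma=\eta$ being vacuous).

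For the implication $\xi_0\in\partial K \Rightarrow$ strict level convexity at $\xi_0$ in at least one direction, I would invoke the supporting hyperplane theorem (cf. \cite{Rockafellar}): since $K$ is convex and $\xi_0\notin\mathrm{int}(K)$, there exists $\alpha\neq 0$ with $\langle\alpha,\xi-\xi_0\rangle\le 0$ for all $\xi\in K$. I claim this $\alpha$ witnesses the definition. Given a tight segment with $\gamma,\eta\in K$ and $\xi_0=t\gamma+(1-t)\eta$, subtracting $\xi_0$ and pairing with $\alpha$ yields $0 = t\langle\alpha,\gamma-\xi_0\rangle + (1-t)\langle\alpha,\eta-\xi_0\rangle$; as both summands are nonpositive while $t,1-t>0$, each vanishes, so $\langle\alpha,\gamma-\eta\rangle = \langle\alpha,\gamma-\xi_0\rangle - \langle\alpha,\eta-\xi_0\rangle = 0$.

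For the converse I would argue by contraposition, using that $\xi_0\in K$ together with $K$ closed leaves only the alternative $\xi_0\in\mathrm{int}(K)$ when $\xi_0\notin\partial K$. Fixing an arbitrary candidate $\alpha\neq 0$, choose $v$ with $\langle\alpha,v\rangle\neq 0$; since $\xi_0$ is interior to $K$, for small $s>0$ both $\gamma:=\xi_0+sv$ and $\eta:=\xi_0-sv$ lie in $K$, and by the equivalence of the first paragraph $\xi_0=\tfrac12\gamma+\tfrac12\eta$ is a tight segment. Yet $\langle\alpha,\gamma-\eta\rangle = 2s\langle\alpha,v\rangle\neq 0$, so $\alpha$ fails; as $\alpha$ was arbitrary, $f$ cannot be strictly level convex at $\xi_0$ in at least one direction.

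The only genuine subtlety is the correct application of the supporting hyperplane theorem, in particular securing $\alpha\neq 0$ in the degenerate case $\mathrm{int}(K)=\emptyset$, where $K$ is contained in a hyperplane whose normal serves as $\alpha$; everything else reduces to the short sign manipulation above. The argument runs closely parallel to Proposition \ref{Extxi0slc}, with ``extreme point'' there replaced by ``boundary point'' here, which is exactly what one expects since orthogonality to a single supporting direction is weaker than the extreme-point condition.
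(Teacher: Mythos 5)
Your proof is correct, and its overall skeleton is the paper's: a supporting hyperplane at $\xi_0$ gives the implication ``boundary point $\Rightarrow$ strict level convexity at $\xi_0$ in at least one direction'', and the fact that an interior point admits tight segments in every direction gives the converse. However, in the supporting-hyperplane direction your execution is genuinely different and in fact more complete than the paper's. After producing $\alpha\neq 0$ with $\langle\alpha,\xi\rangle\le\langle\alpha,\xi_0\rangle$ on $L_{f(\xi_0)}(f)$, the paper only tests segments through $\xi_0$ whose direction is \emph{collinear} with $\alpha$, showing such a segment cannot satisfy $f(\xi_0)=\max\{f(\xi),f(\eta)\}$ because one endpoint must leave the level set; taken literally, this establishes that no tight segment is parallel to $\alpha$, which is weaker than what Definition \ref{slc1Dmid} demands, namely that \emph{every} tight segment $\xi_0=t\gamma+(1-t)\eta$ with $f(\xi_0)=\max\{f(\gamma),f(\eta)\}$ satisfies $\langle\gamma-\eta,\alpha\rangle=0$; segments neither parallel nor orthogonal to $\alpha$ are left untreated there. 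Your sign-splitting identity $0=t\langle\alpha,\gamma-\xi_0\rangle+(1-t)\langle\alpha,\eta-\xi_0\rangle$, with both summands nonpositive, handles an arbitrary tight segment at once and yields exactly the required orthogonality; you also make explicit the degenerate case ${\rm int}\,L_{f(\xi_0)}(f)=\emptyset$, which the paper glosses over. In the converse direction the two arguments coincide up to the choice of test vector (the paper takes the perturbation along $\alpha$ itself, you allow any $v$ with $\langle\alpha,v\rangle\neq 0$). So your proposal both reproduces the paper's strategy and quietly repairs the one loose step in its proof of the ``if'' part.
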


\begin{remark}
Actually, the lower semicontinuity hypothesis is only required to show that $\xi_0\in \partial L_{f(\xi_0)}(f)$ is a sufficient condition for strict level convexity of $f$ at $\xi_0$ in at least one direction.
\end{remark}

\begin{proof}[Proof] Assume $f$ is strictly level convex at $\xi_0$ in at least one direction. We will show that $\xi_0\in \partial L_{f(\xi_0)}(f)$. Of course $\xi_0\in L_{f(\xi_0)}(f)$. Assume, by contradiction, that $\xi_0 \in {\rm int}L_{f(\xi_0)}(f)$. Then $f(\xi)\le f(\xi_0)$ in a neighborhood of $\xi_0$. Let $\alpha$ be the direction given by Definition \ref{slc1Dmid} and let $\eta_1,\eta_2$ be of the form $\eta_i=\xi_0+\varepsilon_i\alpha$ for some $\varepsilon_i\in\mathbb{R}$ such that $\eta_i$ belong to the neighborhood referred above. Then, by the level convexity of $f$, $f(\xi_0)=\max\{f(\eta_1),f(\eta_2)\}$. But $<\eta_1-\eta_2,\alpha >\neq 0$ which contradicts the hypothesis.

Next we show the opposite implication. Assume that $\xi_0 \in \partial L_{f(\xi_0)}(f)$. The lower semicontinuity of $f$ ensures that $ L_{f(\xi_0)}(f)$ is closed. Notice that, since $\xi_0 \in \partial L_{f(\xi_0)}(f)$, $ L_{f(\xi_0)}(f)\neq\mathbb{R}^n$. Moreover, since $ L_{f(\xi_0)}(f)$ is also convex, there is $\alpha \in\mathbb{R}^n\setminus\{0\}$, such that $<\alpha,\xi_0>\ge <\alpha,\xi>$ for all $\xi\in  L_{f(\xi_0)}(f)$. We will show that $f$ is strictly level convex at $\xi_0$ in the direction $\alpha$. Assume $\xi_0= \lambda \xi + (1-\lambda) \eta$ for some $\lambda\in (0,1)$ and $\eta\neq\xi$ such that $\eta-\xi$ is collinear with $\alpha$. Then at least one of $\xi$ and $\eta$ is not in the set $L_{f(\xi_0)}(f)$, let's say it is $\eta$. Then $f(\eta) > f(\xi_0)$ and thus $f(\xi_0) < \max \{f(\eta), f(\xi)\}$, showing that $f$ is strictly level convex at $\xi_0$ in at least the direction $\alpha$.
\end{proof}

\subsection{Differential inclusions}
We recall that $W^{1,\infty}_0(\Omega)$ is the closure of $C_0^\infty(\Omega)$ in $W^{1,1}(\Omega)$ intersected with $W^{1,\infty}(\Omega)$.

In the sequel we recall two classical results stating necessary and sufficient conditions for existence of solutions to differential inclusions for scalar valued functions. The results are due to Cellina \cite{CellinaNecessary}, \cite{CellinaSufficient}, Friesecke \cite{Friesecke}. See also Bandyopadhyay-Barroso-Dacorogna-Matias \cite{BBDM}. We observe that ${\rm int}\,{\rm co}\,E$ stands for the interior of the convex hull of the set $E$ and we refer respectively to \cite[Theorem 10.24]{D} and \cite[Theorem 2.10]{DaMa} for the proofs.

\begin{theorem}\label{Necdiffincl}
Let $\Omega \subset  R^n$ be a bounded open set, $E \subset \mathbb R^n$, $\xi_0 \in \mathbb R^n$ and denote by $u_{\xi_0}$ an affine function such that $\nabla u_{\xi_0}=\xi_0$. If $u \in u_{\xi_0}+W^{1,\infty}_0(\Omega)$ is such that
$$
\nabla u(x) \in E,\ a.e.\ x \in \Omega,
$$
then
$$
\xi_0 \in E \cup {\rm int}\,{\rm co}\,E.
$$

\end{theorem}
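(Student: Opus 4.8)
The plan is to deduce everything from the single fact that $\nabla u$ has mean value $\xi_0$ on $\Omega$, and then to split according to the position of $\xi_0$ relative to ${\rm co}\,E$. First I would record the mean value identity. Setting $w:=u-u_{\xi_0}\in W^{1,\infty}_0(\Omega)$, its extension $\tilde w$ by zero to $\mathbb R^n$ is Lipschitz with compact support contained in $\overline\Omega$, and $\nabla\tilde w=\nabla w$ a.e.\ in $\Omega$ while $\nabla\tilde w=0$ a.e.\ outside. Since the integral over $\mathbb R^n$ of the gradient of a compactly supported Lipschitz function vanishes (integrate each component first in its own variable), we get $\int_\Omega\nabla w\,dx=0$, hence $\frac1{|\Omega|}\int_\Omega\nabla u\,dx=\xi_0$. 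As $\nabla u(x)\in E$ for a.e.\ $x$ and the average of an $E$-valued integrable map lies in $\overline{{\rm co}}\,E$, this already yields $\xi_0\in\overline{{\rm co}}\,E$.

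Next comes the dichotomy. If $\xi_0\in{\rm int}\,{\rm co}\,E$ there is nothing to prove, so assume $\xi_0\notin{\rm int}\,{\rm co}\,E$. Recalling that a convex set and its closure share the same interior, and that ${\rm co}\,E$ with empty interior forces $E$ into a hyperplane through $\xi_0$, the supporting hyperplane theorem applied to the closed convex set $\overline{{\rm co}}\,E$ at the point $\xi_0$ provides $a\in\mathbb R^n\setminus\{0\}$ with $\langle a,\xi\rangle\le\langle a,\xi_0\rangle$ for every $\xi\in E$. From $\nabla u(x)\in E$ a.e.\ I then get $\langle a,\nabla u(x)\rangle\le\langle a,\xi_0\rangle$ a.e., whereas the mean value identity gives $\int_\Omega\langle a,\nabla u\rangle\,dx=|\Omega|\,\langle a,\xi_0\rangle$; a non-positive integrand with vanishing integral must be zero, so $\langle a,\nabla u(x)\rangle=\langle a,\xi_0\rangle$, i.e.\ $\langle a,\nabla w(x)\rangle=0$, for a.e.\ $x$. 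After an orthogonal change of coordinates making $a$ a coordinate direction, $\tilde w$ is absolutely continuous with zero derivative along almost every line parallel to $a$, hence constant on each such line by continuity; its compact support forces that constant to be $0$, so $\tilde w\equiv0$. Therefore $u=u_{\xi_0}$ and $\nabla u\equiv\xi_0$ a.e., and since $\nabla u\in E$ a.e.\ we conclude $\xi_0\in E$.

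The delicate point is this last rigidity step: converting the scalar constraint $\langle a,\nabla u\rangle=\langle a,\xi_0\rangle$ together with the null boundary datum into $u=u_{\xi_0}$. The zero-extension argument above handles it cleanly, but it relies essentially on $w$ admitting a Lipschitz representative that vanishes on $\partial\Omega$ and on $\Omega$ being bounded, so that $\tilde w$ has compact support. An alternative, matching the inductive proofs in \cite[Theorem 10.24]{D} and \cite[Theorem 2.10]{DaMa}, is to slice $\Omega$ by the hyperplanes orthogonal to $a$ and induct on $n$, the one-dimensional base case amounting to the remark that a function bounded above by a constant yet having that same constant as its mean must coincide with it almost everywhere; I would prefer the direct extension argument, as it sidesteps the Fubini and measurability bookkeeping inherent to slicing.
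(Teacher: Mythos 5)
Your proof is correct, and it is essentially the standard argument behind this classical result: the paper itself gives no proof, deferring to \cite[Theorem 10.24]{D}, whose proof is exactly your scheme (zero mean of $\nabla w$ for $w\in W^{1,\infty}_0(\Omega)$, hence $\xi_0\in\overline{\rm co}\,E$; separation at a non-interior point; equality a.e.\ of the separating linear function; rigidity along lines parallel to the normal, which is the content of \cite[Lemma 11.17]{D}, invoked later by the paper in Example \ref{notalwaysexistence}). The only quibble is a side remark: your alternative ``inductive slicing'' attribution to \cite[Theorem 2.10]{DaMa} is off, since that reference is cited for the sufficient condition (Theorem \ref{Suffdiffincl}), not this necessary one — but this does not affect your argument.
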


\begin{theorem}\label{Suffdiffincl}
Let $\Omega \subset \mathbb R^n$ be a bounded open set and $E \subset \mathbb R^n$. Let $\varphi\in W^{1,\infty}(\Omega)$ satisfying
\begin{equation}\nonumber
\nabla \varphi(x) \in E \cup {\rm int}\,{\rm co}\,E,\ a.e.\ x \in \Omega.
\end{equation}
Then there exists $u \in \varphi + W^{1,\infty}_0(\Omega)$
such that
$$
\nabla u(x) \in E,\ a.e.\ x \in \Omega.$$
Moreover, given $\varepsilon>0$, $u$ can be chosen such that $||u-\varphi||_{L^\infty(\Omega)}\le\varepsilon$.

\begin{remark}\label{SuffdiffinclRem} The last assertion of the previous theorem follows from a more careful pyramidal construction than the one present in \cite[Theorem 2.10]{DaMa}.
\end{remark}

\end{theorem}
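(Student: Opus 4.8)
The plan is to build the solution $u$ by an explicit pyramidal (laminate) construction, refined so that the extra $L^\infty$ estimate comes out of the same scheme; the bare existence statement is classical, going back to Cellina \cite{CellinaSufficient} and Friesecke \cite{Friesecke} (see also \cite[Theorem 10.24]{D} and \cite[Theorem 2.10]{DaMa}), and Theorem~\ref{Suffdiffincl} is precisely the converse of the necessary condition recorded in Theorem~\ref{Necdiffincl}. First I would make three reductions. Replacing $E$ by $E\cap\overline{B_R}$ with $R$ larger than $\|\nabla\varphi\|_{L^\infty(\Omega)}$ and than the radii of the points used below, I may assume $E$ is bounded, so that every gradient produced stays in a fixed bounded set and the limiting function is Lipschitz. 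On the measurable set where $\nabla\varphi\in E$ already I keep $u=\varphi$, which localizes the problem to the part where $\nabla\varphi(x)\in{\rm int}\,{\rm co}\,E$. Finally, since the eventual $u$ will be a $W^{1,\infty}$ limit and the admissible class is stable under uniform convergence with uniformly bounded gradients, it suffices to treat the model case in which $\varphi$ is affine on a small cube, $\varphi=u_{\xi_0}$ with $\xi_0\in{\rm int}\,{\rm co}\,E$, and then patch the constructions over a fine partition of $\Omega$ into such cubes.

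The heart of the matter is a single building block. Given $\xi_0\in{\rm int}\,{\rm co}\,E$, a bounded open set $\omega$, and $\delta>0$, I want a piecewise affine $w\in u_{\xi_0}+W^{1,\infty}_0(\omega)$ with $\nabla w\in E\cup{\rm int}\,{\rm co}\,E$ a.e., with $|\{x\in\omega:\nabla w(x)\notin E\}|<\delta\,|\omega|$, and with $\|w-u_{\xi_0}\|_{L^\infty(\omega)}<\delta$. To produce it I write $\xi_0=\sum_{i=1}^{n+1}\lambda_i\xi_i$ as a convex combination of points $\xi_i\in E$ (Carath\'eodory's theorem, exactly as invoked in Theorem~\ref{Caratheodory for f lc}), and realize this combination by successive laminations. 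In the scalar setting any elementary splitting $\zeta=\mu\zeta_1+(1-\mu)\zeta_2$ is attainable by a continuous sawtooth $w=u_\zeta+g(\langle\zeta_1-\zeta_2,x\rangle)$ whose gradient alternates between $\zeta_1$ and $\zeta_2$, because the only compatibility requirement, that the gradient jump be normal to the interfaces, is automatically satisfied by foliating $\omega$ with hyperplanes orthogonal to $\zeta_1-\zeta_2$. Iterating the two-value lamination $n$ times turns $\xi_0$ into gradients lying in $E$ on all of $\omega$ except a thin boundary layer, where I interpolate back to $u_{\xi_0}$ and where the gradient stays inside ${\rm int}\,{\rm co}\,E$; taking the lamination period small forces both the measure of that layer and the sawtooth amplitude, hence $\|w-u_{\xi_0}\|_{L^\infty}$, below $\delta$.

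With the block in hand I would iterate. Applying it cube by cube on $\Omega$ gives $u_1$ with $\|u_1-\varphi\|_\infty<\varepsilon/2$ and a bad set $B_1=\{\nabla u_1\notin E\}$, a finite union of polyhedra on which $u_1$ is affine with gradients in ${\rm int}\,{\rm co}\,E$, of small measure; applying the block again on $B_1$ with tolerance $\delta_2$ yields $u_2$ with $\|u_2-u_1\|_\infty<\varepsilon/4$ and $|B_2|<\tfrac12|B_1|$, and so on, halving at each stage both the allotted $L^\infty$ budget and the bad-set measure while keeping all gradients in the fixed bounded set $E\cap\overline{B_R}$. Then $u_k$ is Cauchy in $L^\infty$ with uniformly bounded gradients, so it converges weakly-$*$ in $W^{1,\infty}$ to some $u\in\varphi+W^{1,\infty}_0(\Omega)$ with $\|u-\varphi\|_\infty\le\sum_{k\ge1}\varepsilon\,2^{-k}\le\varepsilon$; since $|B_k|\to0$ and $\nabla u_k=\nabla u_{k+1}$ off $B_k$, the limit satisfies $\nabla u\in E$ a.e.

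I expect the genuine difficulty to be exactly this limiting step, the \emph{more careful} pyramidal construction alluded to in Remark~\ref{SuffdiffinclRem}. One must design the building block so that, simultaneously, the bad-set measure contracts geometrically, the gradients never escape the fixed bounded set (guaranteeing a Lipschitz limit lying in $\varphi+W^{1,\infty}_0$), and the accumulated $L^\infty$ corrections sum to at most the prescribed $\varepsilon$. The delicate bookkeeping is balancing, at each stage, the period of the laminate (which governs the amplitude, hence the $L^\infty$ error) against the thickness of the interpolation layer (which governs the residual interior-gradient measure): the classical constructions secure the first two properties but not the sharp $L^\infty$ bound, which is the point that has to be supplied here.
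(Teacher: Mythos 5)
Your proposal has a genuine gap, and it sits exactly where the real content of Theorem \ref{Suffdiffincl} lies: the reduction of a general $\varphi\in W^{1,\infty}(\Omega)$ to the model case ``$\varphi$ affine on a small cube''. This reduction fails as stated. First, the set $\{x\in\Omega:\nabla\varphi(x)\in E\}$ is merely measurable, so you cannot ``keep $u=\varphi$'' on it and modify $\varphi$ on the rest: the rest is not open, and there is no boundary along which to match data. Second, replacing $\varphi$ by a function that is affine on the cubes of a fine partition does not preserve the constraint, because the new gradients are local averages of $\nabla\varphi$, and averages of points of $E\cup{\rm int}\,{\rm co}\,E$ can land on $\partial\,{\rm co}\,E\setminus E$. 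Concretely, take $n=2$, let $E=\{\xi_1,\xi_2,\xi_3\}$ be the vertices of a triangle, and let $\varphi$ be a fine sawtooth whose gradient alternates between $\xi_1$ and $\xi_2$ (admissible: in the scalar case any two gradients can be laminated). Then $u=\varphi$ already solves the inclusion; but any continuous function that is affine on cubes coarser than the period of oscillation and uniformly close to $\varphi$ has, on most cubes, gradient near the open segment $(\xi_1,\xi_2)$, i.e. on $\partial\,{\rm co}\,E\setminus E$, and for such affine data Theorem \ref{Necdiffincl} shows the model problem on a cube admits \emph{no} solution. So your patching scheme gets stuck at the very first step. The same example refutes the principle you invoke to justify the reduction (``the admissible class is stable under uniform convergence with uniformly bounded gradients''): the sawtooths converge uniformly, with bounded gradients, to an affine function whose gradient is not in $E\cup{\rm int}\,{\rm co}\,E$. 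Handling merely measurable data without ever averaging across the constraint is precisely the nontrivial part of \cite[Theorem 2.10]{DaMa}, which is why the paper cites that result rather than reproving it.

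Two further remarks. (a) You locate the ``genuine difficulty'' in the iteration, the limit, and the $L^\infty$ estimate, but those parts of your argument are essentially fine (gradients stabilize a.e.\ off the shrinking bad sets, so the limit inherits $\nabla u\in E$), and the $L^\infty$ bound is in fact the easy part of the theorem: for affine data the pyramidal construction of \cite{DaMa} --- a finite minimum of affine functions with slopes $\xi_i-\xi_0$, $\xi_i\in E$, restricted to the polytope where that minimum is positive --- gives an \emph{exact} solution, with no bad set and no iteration, whose deviation from $u_{\xi_0}$ is proportional to the diameter of the pyramid; covering $\Omega$ \`a la Vitali by pyramids of small diameter is all that Remark \ref{SuffdiffinclRem} means by ``a more careful pyramidal construction''. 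This exact construction also avoids a weakness of your building block: in nested cut-off laminates the intermediate gradients (points of a face of the simplex) and the boundary-layer gradients need not lie in $E\cup{\rm int}\,{\rm co}\,E$ when $E$ is, say, exactly a set of simplex vertices. (b) The preliminary truncation to $E\cap\overline{B_R}$ for a single $R$ is not justified when $E$ is unbounded: the essential range of $\nabla\varphi$ may accumulate on $\partial\,{\rm co}\,E$, and the finitely many points of $E$ needed to capture each value in the interior of their convex hull need not stay in a fixed ball; a countable exhaustion (with the attendant boundary-matching issues between its pieces) is required.
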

\section{Relaxation Theorem} \label{SectionRelaxation}

Consider the following two minimum problems
\begin{equation}\label{problemP}
(P)\qquad\inf\left\{ \operatorname*{ess\,sup}_{x\in\Omega}f\left(  \nabla u\left(
x\right)  \right):\ u\in u_0+W_{0}^{1,\infty}(\Omega)\right\}
\end{equation}
 and
 \begin{equation}\label{problemPlc}
(P^{\rm lc})\qquad\inf\left\{ \operatorname*{ess\,sup}_{x\in\Omega}f^{\rm lslc}\left(  \nabla u\left(
x\right)  \right):\ u\in u_0+W_{0}^{1,\infty}(\Omega)\right\},
\end{equation}
\noindent where $f$ is given, $f^{\rm lslc}$ is the lower semicontinuous and level convex envelope of $f$, introduced in $(iii)$ of Definition \ref{def1.4}, and $u_0\in W^{1,\infty}(\Omega)$ is the boundary data. Notice that, if $u_0$ is only defined on the boundary of $\Omega$, $u_0:\partial\Omega\longrightarrow\mathbb{R}$, and it is a Lipschitz function, we can extend it to all $\Omega$ and get a $W^{1,\infty}$ function. In this case, and in view of Theorem \ref{NSC} below, the existence of solutions to problem $(P)$ can be ensured depending on the choice of the extension.

The goal of this section is to show that $$\inf (P)=\inf (P^{\rm lc}).$$ This will result as a consequence of the relaxation Theorem \ref{PrinariACV} below.

Before that we recall that level convexity, together with lower semicontinuity is a necessary and sufficient condition for sequential weak* lower semicontinuity in $W^{1,\infty}(\Omega)$ for functionals in the supremal form. The sufficient part is due to Barron-Jensen-Wang cf. \cite[Theorem 3.3]{BJW99}.

\begin{theorem}\label{level convexity sufficient}
Let $f:\mathbb{R}^n\longrightarrow\mathbb{R}$ be a level convex and lower semicontinuous function and let $\Omega\subset\mathbb{R}^n$ be a bounded open set. Then the functional  $\displaystyle F(u)=\operatorname*{ess\,sup}_{x\in\Omega}f( \nabla u(x))$ defined in $W^{1,\infty}(\Omega)$ is sequential weak* lower semicontinuous.
\end{theorem}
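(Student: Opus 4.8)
The plan is to verify directly that $F(u)\le\liminf_k F(u_k)$ whenever $u_k\rightharpoonup^* u$ in $W^{1,\infty}(\Omega)$, exploiting the supremal Jensen's inequality of Theorem \ref{Jensensupremalscalar} together with the lower semicontinuity of $f$. I may assume $\lambda:=\liminf_k F(u_k)<+\infty$ (otherwise there is nothing to prove) and, passing to a subsequence not relabeled, that $F(u_k)\to\lambda$. Recall that weak* convergence in $W^{1,\infty}(\Omega)$ means in particular $\nabla u_k\rightharpoonup^*\nabla u$ in $L^\infty(\Omega;\mathbb{R}^n)$; hence, testing against the characteristic function of any ball $B_r(x_0)\subset\subset\Omega$ (which lies in $L^1$), one gets $\int_{B_r(x_0)}\nabla u_k\,dx\to\int_{B_r(x_0)}\nabla u\,dx$.

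First I would fix $\varepsilon>0$. For $k$ large enough $F(u_k)\le\lambda+\varepsilon$, so $f(\nabla u_k(x))\le\lambda+\varepsilon$ for a.e.\ $x\in\Omega$. Applying the supremal Jensen's inequality (Theorem \ref{Jensensupremalscalar}) on the ball $B_r(x_0)$ with the normalized Lebesgue measure yields
$$f\left(\frac{1}{|B_r(x_0)|}\int_{B_r(x_0)}\nabla u_k\,dx\right)\le\operatorname*{ess\,sup}_{x\in B_r(x_0)}f(\nabla u_k(x))\le F(u_k)\le\lambda+\varepsilon.$$
Here I use that $f$, being lower semicontinuous, is Borel measurable, so Theorem \ref{Jensensupremalscalar} indeed applies.

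Next I would pass to the limit in $k$. Since the averages converge and $f$ is lower semicontinuous,
$$f\left(\frac{1}{|B_r(x_0)|}\int_{B_r(x_0)}\nabla u\,dx\right)\le\liminf_{k\to\infty}f\left(\frac{1}{|B_r(x_0)|}\int_{B_r(x_0)}\nabla u_k\,dx\right)\le\lambda+\varepsilon,$$
valid for every ball $B_r(x_0)\subset\subset\Omega$. Finally, I would let $r\to0$: by the Lebesgue differentiation theorem the averages of $\nabla u$ over $B_r(x_0)$ converge to $\nabla u(x_0)$ for a.e.\ $x_0\in\Omega$, and invoking the lower semicontinuity of $f$ one more time gives $f(\nabla u(x_0))\le\lambda+\varepsilon$ for a.e.\ $x_0$. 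Hence $F(u)\le\lambda+\varepsilon$, and letting $\varepsilon\to0$ concludes $F(u)\le\lambda$.

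The only delicate point is that $f$ is merely lower semicontinuous rather than continuous, so one cannot pass to limits in $f$ freely; the argument is arranged so that both limit passages (in $k$ and in $r$) go through inequalities in the correct direction, where lower semicontinuity alone suffices. Equivalently, the whole scheme can be recast by observing that each level set $L_{\lambda+\varepsilon}(f)=\{\xi:\ f(\xi)\le\lambda+\varepsilon\}$ is closed and convex, hence stable under the averaging and weak* limit operations above; this convex-set viewpoint makes transparent that level convexity of $f$ is precisely the property being exploited.
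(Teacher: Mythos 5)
Your proof is correct. There is, however, nothing in the paper to compare it against: the paper does not prove Theorem \ref{level convexity sufficient} at all, but attributes it to Barron--Jensen--Wang \cite[Theorem 3.3]{BJW99}, so your argument stands as a genuine self-contained proof rather than a variant of an internal one. The key steps all check out: a lower semicontinuous $f$ is Borel measurable, so Theorem \ref{Jensensupremalscalar} applies on each ball $B_r(x_0)\subset\subset\Omega$ with the normalized Lebesgue measure; weak* convergence of $\nabla u_k$ in $L^\infty(\Omega;\mathbb{R}^n)$ tested against $\chi_{B_r(x_0)}\in L^1(\Omega)$ gives convergence of the averages; lower semicontinuity of $f$ is used only in passages where the inequality goes the right way (first in $k$, then in $r$ via the Lebesgue differentiation theorem, whose exceptional null set depends only on $\nabla u$ and not on $\varepsilon$); and the final bound $F(u)\le\lambda+\varepsilon$ for every $\varepsilon>0$ yields the claim. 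Your closing ``convex-set viewpoint'' is the same argument repackaged: the statement that the constraint $\nabla v\in C$ a.e.\ is weak* stable for $C$ closed and convex (here $C=L_{\lambda+\varepsilon}(f)$) is proved exactly by averaging over balls and taking Lebesgue points, and it is the mechanism by which level convexity plus lower semicontinuity enters. Compared with the cited source, which works in the generality of measure-theoretic (Young-measure type) formulations of Jensen's inequality, your route is more elementary: it needs nothing beyond the supremal Jensen inequality already stated in the paper and the Lebesgue differentiation theorem. One cosmetic remark: the intermediate observation that $f(\nabla u_k(x))\le\lambda+\varepsilon$ a.e.\ is not actually needed, since the display that follows only uses $\operatorname*{ess\,sup}_{x\in B_r(x_0)}f(\nabla u_k(x))\le F(u_k)$; likewise the $\varepsilon$ could be dispensed with entirely by taking $\liminf_k$ directly in the inequality $f\bigl(\frac{1}{|B_r(x_0)|}\int_{B_r(x_0)}\nabla u_k\,dx\bigr)\le F(u_k)$.
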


The necessary condition follows from \cite[Theorem 3.5]{BJ} (see also \cite[Theorem 4.1]{ABP}).

\begin{theorem}\label{level convexity necessary}
Let $f:\mathbb{R}^n\longrightarrow\mathbb{R}$ be a Borel function and  let $\Omega\subset\mathbb{R}^n$ be a bounded open set. If the functional  $\displaystyle F(u)=\operatorname*{ess\,sup}_{x\in\Omega}f( \nabla u(x))$ defined in $W^{1,\infty}(\Omega)$ is sequentially weak* lower semicontinuous, then $f$ is lower semicontinuous and level convex.
\end{theorem}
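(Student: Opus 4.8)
The plan is to establish the two required properties of $f$—lower semicontinuity and level convexity—separately, in each case by feeding into $F$ a sequence of Lipschitz competitors that converges weak* to a simple affine limit, and on which $F$ can be evaluated explicitly, so that the assumed sequential weak* lower semicontinuity of $F$ forces the corresponding inequality on $f$. Note that the statement imposes no boundary condition, so the test sequences may be chosen freely in $W^{1,\infty}(\Omega)$.

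For lower semicontinuity, I would fix $\xi\in\mathbb{R}^n$ and a sequence $\xi_n\to\xi$, and compare $F$ on the affine functions $u_{\xi_n}(x)=\langle\xi_n,x\rangle$ and $u_\xi(x)=\langle\xi,x\rangle$. Since $\Omega$ is bounded, $\xi_n\to\xi$ gives $u_{\xi_n}\to u_\xi$ uniformly on $\Omega$ together with $\nabla u_{\xi_n}=\xi_n\to\xi=\nabla u_\xi$, hence $u_{\xi_n}\weakstar u_\xi$ in $W^{1,\infty}(\Omega)$. As the gradients are constant, $F(u_{\xi_n})=f(\xi_n)$ and $F(u_\xi)=f(\xi)$, so weak* lower semicontinuity yields $f(\xi)=F(u_\xi)\le\liminf_n F(u_{\xi_n})=\liminf_n f(\xi_n)$, which is precisely the sequential characterization of lower semicontinuity of $f$.

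For level convexity, I would fix $\xi,\eta\in\mathbb{R}^n$ with $\xi\neq\eta$ (the case $\xi=\eta$ being trivial) and $t\in(0,1)$, set $\zeta:=t\xi+(1-t)\eta$ and $\nu:=(\xi-\eta)/|\xi-\eta|$, and construct a laminate oscillating between the gradient values $\xi$ and $\eta$. Let $\phi:\mathbb{R}\to\mathbb{R}$ be the Lipschitz $1$-periodic sawtooth whose derivative equals $(1-t)|\xi-\eta|$ on a fraction $t$ of each period and $-t|\xi-\eta|$ on the remaining fraction $1-t$; these slopes are chosen precisely so that the mean of $\phi'$ over a period vanishes (so $\phi$ is genuinely periodic and bounded) and so that $\zeta+(1-t)|\xi-\eta|\,\nu=\xi$ while $\zeta-t|\xi-\eta|\,\nu=\eta$. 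Then the functions
$$u_k(x):=\langle\zeta,x\rangle+\tfrac{1}{k}\,\phi\!\left(k\,\langle\nu,x\rangle\right)$$
satisfy $\nabla u_k(x)=\zeta+\phi'(k\langle\nu,x\rangle)\,\nu\in\{\xi,\eta\}$ for a.e. $x\in\Omega$, converge uniformly to $u_\zeta(x)=\langle\zeta,x\rangle$ (since $\tfrac{1}{k}\phi$ is bounded by $C/k$), and have gradients bounded in $L^\infty$ with $\nabla u_k\weakstar\zeta$ by the standard weak* convergence of rapidly oscillating periodic functions to their mean; hence $u_k\weakstar u_\zeta$ in $W^{1,\infty}(\Omega)$. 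Because $f(\nabla u_k(x))\in\{f(\xi),f(\eta)\}$ a.e., one has $F(u_k)=\operatorname*{ess\,sup}_{x\in\Omega}f(\nabla u_k(x))\le\max\{f(\xi),f(\eta)\}$, while $F(u_\zeta)=f(\zeta)$. Weak* lower semicontinuity then gives
$$f(t\xi+(1-t)\eta)=F(u_\zeta)\le\liminf_k F(u_k)\le\max\{f(\xi),f(\eta)\},$$
which is exactly level convexity.

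The argument is essentially routine once the oscillating construction is in place, so there is no deep obstacle; the only points requiring care are checking that the chosen slopes make $\phi$ periodic and reproduce exactly the two values $\xi,\eta$ as gradients, and verifying the weak* convergence $u_k\weakstar u_\zeta$—in particular that $\nabla u_k$ converges weak* to the mean $\zeta$ and not merely remains bounded. It is worth emphasizing that, since no boundary datum is prescribed, the laminate need never be corrected near $\partial\Omega$: it suffices that $u_k,u_\zeta\in W^{1,\infty}(\Omega)$ and that the assumed lower semicontinuity holds on all of $W^{1,\infty}(\Omega)$.
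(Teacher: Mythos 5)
Your proof is correct. It is worth noting that the paper does not actually prove this theorem at all: the statement is justified there purely by citation, to Barron--Jensen \cite[Theorem 3.5]{BJ} and Acerbi--Buttazzo--Prinari \cite[Theorem 4.1]{ABP}. Your argument is therefore a genuinely self-contained alternative, and it is the standard one behind such results: for lower semicontinuity you test $F$ on the affine functions $u_{\xi_n}$, which in fact converge \emph{strongly} (hence weak*) to $u_\xi$ in $W^{1,\infty}(\Omega)$ and on which $F$ is evaluated exactly; for level convexity you test $F$ on a simple laminate, a sawtooth in the direction $\nu=(\xi-\eta)/|\xi-\eta|$ whose two slopes $(1-t)|\xi-\eta|$ and $-t|\xi-\eta|$ are chosen with zero mean (so $\phi$ is periodic and bounded) and so that $\nabla u_k\in\{\xi,\eta\}$ a.e.\ with volume fractions $t$ and $1-t$, giving $F(u_k)\le\max\{f(\xi),f(\eta)\}$ while $u_k$ converges weak* to $u_\zeta$. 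The points you flag as delicate are indeed handled correctly; moreover, the weak* convergence of the gradients can be obtained even more cheaply than via the Riemann--Lebesgue lemma: a sequence bounded in $W^{1,\infty}(\Omega)$ converging uniformly automatically has gradients converging weak* in $L^\infty$, since they converge distributionally to $\nabla u_\zeta$ and bounded sequences in $L^\infty$ have weak*-compact closure, so all weak* cluster points coincide with the distributional limit. Your observation that no boundary datum is prescribed is also the right one --- it is exactly what lets you use the raw laminate without the cut-off corrections that a Dirichlet class would force. What your route buys is a proof internal to the paper, elementary and explicit; what the paper's route buys is brevity, by deferring to results established in the literature in a more general setting.
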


Now we establish the relaxation theorem.

\begin{theorem}\label{PrinariACV}
Let $\Omega \subset \mathbb R^n$ be a bounded open set with Lipschitz boundary and let $f: \mathbb R^n \to \mathbb R$ be a continuous function satisfying
\begin{equation}\label{alphacoerciveness}
f(\xi) \geq \gamma (|\xi|),\ \forall\ \xi \in \mathbb R^n,
\end{equation}
with $\gamma :\mathbb R^+ \to \mathbb R^+$ a continuous and increasing function such that $\displaystyle{\lim_{t \to +\infty} \gamma (t)=+ \infty}$.
 Let $u_0 \in W^{1,\infty}(\Omega)$.
Define the functional $F: u \in u_0 + W^{1,\infty}_0(\Omega) \to \displaystyle\operatorname*{ess\,sup}_{x\in\Omega}f( \nabla u(x))$,
and let $\overline{F}$ be  the relaxed functional of $F$ with respect to the weak $\ast$ convergence in $W^{1,\infty}(\Omega)$, namely for every $u\in u_0+ W^{1,\infty}_0(\Omega)$,
\begin{equation}\nonumber
\begin{array}{ll}
\displaystyle{\overline{F}(u)= \inf \left\{\liminf_{h \to + \infty} F(u_h): u_h \in u_0+ W^{1,\infty}_0(\Omega), u_h \rightharpoonup u \text{\,weakly\,} \ast \hbox{\,in }W^{1,\infty}(\Omega)\right\}.}
\end{array}
\end{equation}

\noindent Then
\begin{equation}\nonumber
\displaystyle{\overline{F}(u)= \supess_{x\in \Omega} f^{\rm lslc}(\nabla u(x)),\ \forall\ u \in u_0+ W^{1,\infty}_0(\Omega).}
\end{equation}
\end{theorem}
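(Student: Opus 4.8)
The plan is to establish the two inequalities between $\overline F(u)$ and $G(u):=\operatorname*{ess\,sup}_{x\in\Omega}f^{\rm lslc}(\nabla u(x))$ separately, for every $u\in u_0+W^{1,\infty}_0(\Omega)$. First I would record what the hypotheses give about the envelope. Since $f$ is continuous, nonnegative and, by \eqref{alphacoerciveness}, satisfies $f(\xi)\ge\gamma(|\xi|)\to+\infty$ as $|\xi|\to\infty$, it is bounded below (so $f^{\rm lc}>-\infty$) and Theorem \ref{Caratheodory for f lc} applies: $f^{\rm lc}$ is continuous and real valued, and $f^{\rm lslc}=f^{\rm lc}$. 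Hence the density in the statement is continuous, level convex and lower semicontinuous.

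For the lower bound I would apply Theorem \ref{level convexity sufficient} with $f^{\rm lslc}$ in place of $f$: being level convex and lower semicontinuous, $f^{\rm lslc}$ makes $G$ sequentially weak $\ast$ lower semicontinuous on $W^{1,\infty}(\Omega)$. Since $f^{\rm lslc}\le f$ by Remark \ref{remark envelopes}, we have $G\le F$, so along any sequence $u_h\rightharpoonup u$ weakly $\ast$ in $W^{1,\infty}$ one gets $G(u)\le\liminf_h G(u_h)\le\liminf_h F(u_h)$; taking the infimum over all admissible sequences yields $G(u)\le\overline F(u)$.

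The heart of the proof is the reverse inequality, for which the engine is the differential inclusion result Theorem \ref{Suffdiffincl} combined with the convex-hull identity of Corollary \ref{Corollary to Caratheodory}. Fix $u$, set $M:=G(u)$, so that $\nabla u(x)\in\{f^{\rm lslc}\le M\}$ for a.e. $x$, and for $\varepsilon>0$ let $E_\varepsilon:=\{\eta\in\mathbb{R}^n:\ f(\eta)\le M+\varepsilon\}$, closed by continuity of $f$ and bounded by \eqref{alphacoerciveness}. By Corollary \ref{Corollary to Caratheodory}, ${\rm co}\,E_\varepsilon=\{f^{\rm lc}\le M+\varepsilon\}$; since $f^{\rm lc}=f^{\rm lslc}$ is continuous, $\{f^{\rm lslc}<M+\varepsilon\}$ is open and contained in ${\rm int}\,{\rm co}\,E_\varepsilon$, and as $\{f^{\rm lslc}\le M\}\subseteq\{f^{\rm lslc}<M+\varepsilon\}$ I conclude that $\nabla u(x)\in{\rm int}\,{\rm co}\,E_\varepsilon$ for a.e. $x$. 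Applying Theorem \ref{Suffdiffincl} with $\varphi=u$ and $E=E_\varepsilon$ then produces $v\in u+W^{1,\infty}_0(\Omega)=u_0+W^{1,\infty}_0(\Omega)$ with $\nabla v\in E_\varepsilon$ a.e. and $\|v-u\|_{L^\infty(\Omega)}\le\varepsilon$; in particular $F(v)\le M+\varepsilon$.

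To conclude I would run this with $\varepsilon=1/h$, obtaining $u_h\in u_0+W^{1,\infty}_0(\Omega)$ with $F(u_h)\le M+1/h$ and $\|u_h-u\|_{L^\infty(\Omega)}\le 1/h$. Because $\nabla u_h\in E_1$ is uniformly bounded by \eqref{alphacoerciveness}, the uniform convergence $u_h\to u$ upgrades to $u_h\rightharpoonup u$ weakly $\ast$ in $W^{1,\infty}$, so $\overline F(u)\le\liminf_h F(u_h)\le M=G(u)$, which together with the lower bound proves the identity. I expect the main obstacle to be the upper bound, and precisely the verification that $\nabla u\in{\rm int}\,{\rm co}\,E_\varepsilon$ almost everywhere: this is exactly where the continuity of the envelope (Theorem \ref{Caratheodory for f lc}) and the identification ${\rm co}\{f\le c\}=\{f^{\rm lc}\le c\}$ (Corollary \ref{Corollary to Caratheodory}) are indispensable. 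Once this geometric inclusion is secured, the genuinely hard pyramidal construction, together with the $L^\infty$ control that guarantees the weak $\ast$ convergence, is entirely delegated to Theorem \ref{Suffdiffincl}.
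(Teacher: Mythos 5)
Your proof is correct, and it takes a genuinely different route from the paper's. The paper does not argue Theorem \ref{PrinariACV} directly at all: it appeals to the arguments of Prinari \cite[Theorem 2.6]{Pr09}, extending the functional to $C(\overline{\Omega})$ by $+\infty$ outside $u_0+W^{1,\infty}_0(\Omega)$ and passing through the sequentially weak $\ast$ lower semicontinuous envelope, identified with the lower semicontinuous envelope for the uniform topology via \cite[Proposition 3.1]{Pr02} and \cite[Theorem 8.10 and Corollary 8.12]{DM93}. You instead give a direct, self-contained two-inequality argument: your lower bound $G\le\overline F$ follows by applying Theorem \ref{level convexity sufficient} to $f^{\rm lslc}$ (legitimate, since Theorem \ref{Caratheodory for f lc} under \eqref{alphacoerciveness} gives $f^{\rm lslc}=f^{\rm lc}$ continuous and real valued), and your upper bound builds recovery sequences by solving the differential inclusion $\nabla v\in E_\varepsilon=\{f\le M+\varepsilon\}$ via Theorem \ref{Suffdiffincl}, after the correct verification that $\nabla u\in{\rm int}\,{\rm co}\,E_\varepsilon$ a.e., which rests on Corollary \ref{Corollary to Caratheodory} and the continuity of the envelope. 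As for what each approach buys: your construction treats the boundary constraint explicitly and constructively (Prinari's theorem, as the paper itself notes, carries no boundary condition, so the paper's ``our arguments are very similar'' conceals exactly that adaptation), and it makes transparent where coercivity and continuity of $f$ enter; on the other hand it is intrinsically scalar, since the sharpness of Theorem \ref{Suffdiffincl} has no vectorial counterpart, whereas the abstract relaxation route transfers more readily to general settings. One dependency worth flagging: your argument essentially uses the final assertion of Theorem \ref{Suffdiffincl}, namely $\|v-u\|_{L^\infty(\Omega)}\le\varepsilon$, which the paper only justifies through the brief Remark \ref{SuffdiffinclRem}; without that refinement your sequence $u_h$ would be bounded in $W^{1,\infty}(\Omega)$ but need not converge weakly $\ast$ to $u$ itself, so the reliance is real, though permissible since the paper states it as part of the theorem.
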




\begin{remark} A result similar to Theorem \ref{PrinariACV} was proved by Prinari \cite[Theorem 2.6]{Pr09}, with no boundary condition. Our arguments are very similar so we omit them.

In the integral context, this type of results can be proved directly thanks to piecewise affine approximation arguments. In the supremal setting, sets of arbitrarily small measure are determinant to the value of the functional and this kind of arguments is not well fitted. To show the relaxation Theorem \ref{PrinariACV} we need to pass through the sequential weak $\ast$
lower semicontinuous envelope of the functional involved in problem $(P)$, and we extended first this functional to $C(\overline{\Omega})$, as $+ \infty$ in the complement of $u_0+ W^{1,\infty}_0(\Omega)$.  For the reader's convenience we recall that a result devoted to this extension can be found in \cite[Proposition 3.1]{Pr02},  see also \cite[Theorem 8.10 and Corollary 8.12]{DM93}. We also observe that our setting entails that the sequentially weak $\ast$ lower semicontinuous envelope of $f$ coincides with the lower semicontinuous envelope of the extended functional in $C(\overline{\Omega})$, with respect to the uniform tolopology.
\end{remark}

Now we achieve the desired condition $$\inf (P)=\inf(P^{\rm lc}).$$

\begin{corollary}\label{Plc=P}
Let $\Omega \subset \mathbb R^n$ be a bounded open set with Lipschitz boundary and let $f: \mathbb R^n \to \mathbb R$ be a continuous function satisfying condition \eqref{alphacoerciveness}. Let
 $u_0 \in W^{1,\infty}(\Omega)$ and let $(P)$ and $(P^{\rm lc})$ be the problems (\ref{problemP}) and (\ref{problemPlc}), respectively. Then
$$\inf(P)=\inf (P^{\rm lc}).$$

Moreover, if the boundary condition is affine, say $u_0(x)=u_{\xi_0}(x)$ for some $\xi_0 \in \mathbb R^n$, then $$\inf (P)=f^{\rm lslc}(\xi_0).$$
\end{corollary}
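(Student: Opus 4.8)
The plan is to derive both assertions from the relaxation Theorem \ref{PrinariACV} combined with the elementary fact that relaxation preserves infima. First I would record, writing $F(u) = \operatorname*{ess\,sup}_{x\in\Omega} f(\nabla u(x))$ on $u_0 + W^{1,\infty}_0(\Omega)$, the identity $\inf(P) = \inf_u \overline{F}(u)$. Since $\overline{F} \le F$ one has $\inf_u \overline{F} \le \inf(P)$. For the reverse, fix any admissible $u$: every competing sequence $u_h \rightharpoonup u$ weakly $\ast$ in $W^{1,\infty}(\Omega)$ satisfies $\liminf_h F(u_h) \ge \inf(P)$ because each $F(u_h)\ge \inf(P)$, so that $\overline{F}(u) \ge \inf(P)$; taking the infimum over $u$ gives $\inf_u\overline{F} \ge \inf(P)$. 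Hence $\inf_u \overline{F} = \inf(P)$. Now Theorem \ref{PrinariACV} gives $\overline{F}(u) = \operatorname*{ess\,sup}_{x\in\Omega} f^{\rm lslc}(\nabla u(x))$, so $\inf_u \overline{F}$ is precisely $\inf(P^{\rm lc})$ for problem \eqref{problemPlc}, yielding $\inf(P) = \inf(P^{\rm lc})$.

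For the affine case $u_0 = u_{\xi_0}$, with $\nabla u_{\xi_0} = \xi_0$, I would prove $\inf(P^{\rm lc}) = f^{\rm lslc}(\xi_0)$ by two inequalities. The upper bound $\inf(P^{\rm lc}) \le f^{\rm lslc}(\xi_0)$ comes from testing with $u = u_{\xi_0}$ itself, since then $\operatorname*{ess\,sup}_{x\in\Omega} f^{\rm lslc}(\nabla u) = f^{\rm lslc}(\xi_0)$. For the lower bound I invoke the supremal Jensen inequality of Theorem \ref{Jensensupremalscalar}. Under \eqref{alphacoerciveness} the supremand $f$ is continuous with $\lim_{|\xi|\to\infty} f(\xi) = +\infty$, so Theorem \ref{Caratheodory for f lc} applies and gives that $f^{\rm lslc} = f^{\rm lc}$ is continuous; being also bounded below by $0$ (the constant $0$ is level convex and lies below $f$) it is finite-valued, Borel measurable and level convex, so Jensen is available with the finite Lebesgue measure on $\Omega$. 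For any $u \in u_{\xi_0} + W^{1,\infty}_0(\Omega)$ the function $u - u_{\xi_0}$ lies in $W^{1,\infty}_0(\Omega)$ and hence has zero-mean gradient, so $\tfrac{1}{|\Omega|}\int_\Omega \nabla u\,dx = \xi_0$ and Jensen gives $f^{\rm lslc}(\xi_0) = f^{\rm lslc}\bigl(\tfrac{1}{|\Omega|}\int_\Omega \nabla u\,dx\bigr) \le \operatorname*{ess\,sup}_{x\in\Omega} f^{\rm lslc}(\nabla u)$. Taking the infimum over $u$ yields $f^{\rm lslc}(\xi_0) \le \inf(P^{\rm lc})$, and together with the upper bound and the first assertion this produces $\inf(P) = \inf(P^{\rm lc}) = f^{\rm lslc}(\xi_0)$.

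The genuinely hard content is entirely absorbed into Theorem \ref{PrinariACV}; the remaining steps are routine. The only points deserving care are the continuity and finiteness of $f^{\rm lslc}$ required to apply Jensen, which are furnished by Theorem \ref{Caratheodory for f lc} under the standing coercivity, and the identity $\int_\Omega \nabla \varphi\,dx = 0$ for $\varphi \in W^{1,\infty}_0(\Omega)$, which follows by density from $C^\infty_0(\Omega)$ together with the divergence theorem. I expect no substantive obstacle beyond this bookkeeping, since the relaxation-preserves-infima argument is purely formal and does not even use the coercivity hypothesis.
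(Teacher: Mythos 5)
Your proof is correct, and its overall architecture is the same as the paper's: both parts rest on Theorem \ref{PrinariACV} for the identity $\inf(P)=\inf(P^{\rm lc})$ and on the supremal Jensen inequality (Theorem \ref{Jensensupremalscalar}) for the affine case. The one place you genuinely deviate is in extracting the first equality from the relaxation theorem. The paper first applies the direct method: coercivity from \eqref{alphacoerciveness} plus the sequential weak$*$ lower semicontinuity of $\overline{F}$ give a minimizer $\overline{u}$ of $(P^{\rm lc})$, and then a recovery sequence for $\overline{F}(\overline{u})$ yields $\min(P^{\rm lc})\ge\inf(P)$. You instead use the purely formal fact that relaxation preserves infima ($\overline{F}\le F$ via constant sequences, and $\overline{F}(u)\ge\inf(P)$ since every competing sequence is admissible), which is more elementary and, as you note, needs neither coercivity nor lower semicontinuity at that step. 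What the paper's longer route buys is the attainment of the infimum of $(P^{\rm lc})$, a fact that is natural to record here since the rest of the paper (e.g.\ Theorem \ref{Theoremexistenceregular} and Proposition \ref{Prop no solution}) works with solutions of $(P^{\rm lc})$; your argument establishes the equality of infima but not existence of a minimizer. In the affine case your two-inequality argument is exactly the paper's one-line observation that Jensen makes $u_{\xi_0}$ a solution of $(P^{\rm lc})$; your detour through Theorem \ref{Caratheodory for f lc} to get continuity of $f^{\rm lslc}$ is not needed for measurability, since $f^{\rm lslc}$ is by construction lower semicontinuous (hence Borel), level convex, and finite-valued because $\gamma(|\cdot|)\le f^{\rm lslc}\le f$, which is all that Theorem \ref{Jensensupremalscalar} requires.
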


\begin{remark}\label{weakquasiconvexity}
From this result we obtain that, for every $\xi_0 \in \mathbb R^n$,
\begin{equation}\label{wMqcxe}
\displaystyle{f^{\rm lslc}(\xi_0)= \inf\left\{ \operatorname*{ess\,sup}_{x\in\Omega}f\left(  \nabla u\left(
x\right)  \right):\ u\in u_{\xi_{0}}+W_{0}^{1,\infty}(\Omega)\right\}.}
\end{equation}
This formula represents an alternative and equivalent formulation for the lower semicontinuous and level convex envelope of $f$,  when $f:\mathbb R^n \to \mathbb{R}$ is continuous and satisfies suitable coerciveness assumptions.

On the other hand \eqref{wMqcxe} is known as weak Morrey quasiconvexity (see \cite[Definition 2.2]{BJW99} when $\Omega$ is a cube). We observe that Corollary \ref{Plc=P} entails that, at least in the scalar case, \eqref{wMqcx} holds in any open set $\Omega$, not necessarily a cube.
Moreover,  we observe that we don't expect formula \eqref{wMqcxe} to provide a characterization to $f^{\rm lslc}$ in the vectorial case. Indeed, the notions of level convexity and weak Morrey quasiconvexity, in general, don't coincide and we don't expect them to coincide even under coercivity assumptions.
\end{remark}

\begin{proof}[Proof] We start proving the first equality.
Clearly $\inf(P) \geq \inf(P^{\rm lc})$. To prove the converse inequality, let $\overline{F}$ be the functional introduced in Theorem \ref{PrinariACV} and observe that the same theorem entails that
$\displaystyle{\overline{F}(u) =\supess_{\Omega}f^{\rm lslc}(\nabla u)}$. Therefore, by the direct method of the calculus of variations, the infimum appearing in $(P^{\rm lc})$ is indeed a minimum, since (\ref{alphacoerciveness}) entails the required coercivity condition and since $\overline{F}$ is sequential weak $\ast$ lower semicontinuous. Thus we can write
$$
\displaystyle{\min(P^{\rm lc}) = \overline{F}(\overline{u}),}
$$
for some $\overline{u}\in u_0+ W^{1,\infty}_0(\Omega)$.

By definition of $\overline{F}$, we also have that there exists a sequence $\{u_n\}\subset u_0+ W^{1,\infty}_0(\Omega)$, such that
$$
\displaystyle{\overline{F}(\overline{u})=\liminf_{n \to +\infty}F(u_n) \geq \inf (P)},
$$
and that proves the first equality in the claim.

To prove the last assertion of the corollary we only need to observe that, if the boundary condition is affine then the supremal Jensen's inequality in Theorem \ref{Jensensupremalscalar}  guarantees that $u_{\xi_0}$ is a solution to problem $(P^{\rm lc})$.
\end{proof}

\section{Necessary and Sufficient Conditions}\label{NecSufCond}

In this section we will investigate necessary and sufficient conditions for existence of solutions to the non-level convex problem $(P)$ introduced in Section \ref{SectionRelaxation}, see equation (\ref{problemP}). Before that we start with some considerations on level convex problems.

Observe that, if $f$ is a level convex function, then the solutions $u$ to the related problem $(P)$ are completely characterized by the following condition
\begin{equation}\label{solutionsforlevelconvexproblems}\left\{\begin{array}{l}u\in u_0+W_0^{1, \infty}(\Omega)\vspace{0.2cm}\\f(\nabla u(x))\le \inf (P),\ a.e.\ x\in\Omega.\end{array}\right.\end{equation}

This characterization shall be compared with Theorem 1 in \cite{CellinaNecessary}. In the present supremal context and for affine boundary condition $u_0=u_{\xi_0}$, the analogous result we should obtain is that for any solution $u$ to problem $(P)$, one has $\nabla u(x)\in F,\ a.e.\ x\in\Omega$ if $F$ is a face of the convex set $L_{f(\xi_0)}(f)$ containing $\xi_0$ in its relative interior. Since the relative interiors of the faces are disjoint, cf. \cite[Theorem 18.2]{Rockafellar}, Proposition \ref{diffincl} below, shows that $L_{f(\xi_0)}(f)$ is the only possible face containing $\xi_0$ in its relative interior. Therefore the analogous result to \cite[Theorem 1]{CellinaNecessary} doesn't give more information than what was stated in condition \eqref{solutionsforlevelconvexproblems}.\medskip

The already mentioned Proposition \ref{diffincl}, concerns uniqueness of solution to a level convex problem.

\begin{proposition}\label{diffincl}
Let $f:\mathbb{R}^n\longrightarrow\mathbb{R}$ be a Borel measurable level convex function and let $(P)$ be the related problem defined in (\ref{problemP}) with an affine boundary condition $u_0=u_{\xi_0}$, $\xi_0\in\mathbb{R}^n$.

If $(P)$ admits a solution $u\neq u_{\xi_0}$, then $\xi_0\in {\rm int}\, L_{f(\xi_0)}(f)$.
\end{proposition}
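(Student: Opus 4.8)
The plan is to argue by contraposition on the conclusion. Suppose $u$ is a solution to $(P)$ with affine boundary datum $u_{\xi_0}$, and suppose $\xi_0 \notin {\rm int}\, L_{f(\xi_0)}(f)$; I will show that then necessarily $u = u_{\xi_0}$. First I record the value of the infimum. Since $f$ is level convex, the supremal Jensen inequality of Theorem \ref{Jensensupremalscalar} gives, for any competitor $v \in u_{\xi_0} + W^{1,\infty}_0(\Omega)$, that $f(\xi_0) = f\left(\frac{1}{|\Omega|}\int_\Omega \nabla v\,dx\right) \le \operatorname*{ess\,sup}_{x\in\Omega} f(\nabla v(x))$, because $\frac{1}{|\Omega|}\int_\Omega \nabla v\,dx = \xi_0$ for every such $v$ (the boundary datum being affine). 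Taking the infimum over $v$ shows $f(\xi_0) \le \inf(P)$, while testing with $v = u_{\xi_0}$ itself gives $\inf(P) \le f(\xi_0)$; hence $\inf(P) = f(\xi_0)$. By the characterization \eqref{solutionsforlevelconvexproblems} of solutions to level convex problems, any solution $u$ therefore satisfies $\nabla u(x) \in L_{f(\xi_0)}(f)$ for a.e. $x \in \Omega$.

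Next I exploit the differential-inclusion necessary condition, Theorem \ref{Necdiffincl}, with $E := L_{f(\xi_0)}(f)$. Since $u \in u_{\xi_0} + W^{1,\infty}_0(\Omega)$ and $\nabla u(x) \in E$ a.e., the theorem forces
$$
\xi_0 \in E \cup {\rm int}\,{\rm co}\,E.
$$
Because $f$ is level convex, the set $E = L_{f(\xi_0)}(f)$ is already convex, so ${\rm co}\,E = E$ and thus $\xi_0 \in E \cup {\rm int}\,E = L_{f(\xi_0)}(f)$, which we knew. The useful consequence comes from combining this with the standing assumption $\xi_0 \notin {\rm int}\,L_{f(\xi_0)}(f)$: the inclusion $\xi_0 \in E \cup {\rm int}\,E$ together with $\xi_0 \notin {\rm int}\,E$ leaves only $\xi_0 \in E \setminus {\rm int}\,E = \partial L_{f(\xi_0)}(f)$ (relative to the ambient space); in any case $\xi_0$ lies on the boundary and not the interior of the convex level set.

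The crux is then to show that a solution whose gradient lands a.e. in a convex set $E$, while the mean value $\xi_0$ sits on $\partial E$ rather than in ${\rm int}\,E$, must be the affine function itself. The plan here is to separate: since $\xi_0 \in \partial E$ and $E$ is convex, there is a supporting hyperplane, i.e. some $\alpha \in \mathbb{R}^n \setminus \{0\}$ with $\langle \alpha, \eta \rangle \le \langle \alpha, \xi_0 \rangle$ for all $\eta \in E$. Applying this pointwise to $\eta = \nabla u(x) \in E$ gives $\langle \alpha, \nabla u(x) \rangle \le \langle \alpha, \xi_0 \rangle$ a.e. On the other hand, averaging $\langle \alpha, \nabla u(x)\rangle$ over $\Omega$ returns exactly $\langle \alpha, \xi_0 \rangle$, since $u - u_{\xi_0} \in W^{1,\infty}_0(\Omega)$ integrates its gradient to zero. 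A nonpositive function with zero integral must vanish a.e., so $\langle \alpha, \nabla u(x) \rangle = \langle \alpha, \xi_0 \rangle$ a.e. This is the main obstacle, and the natural way to close it is a Carathéodory-type or direct convexity argument: the equality confines $\nabla u(x)$ a.e. to the exposed face $E \cap \{\eta : \langle\alpha,\eta\rangle = \langle\alpha,\xi_0\rangle\}$, and when $\xi_0$ is an extreme point of this face — guaranteed by the supporting-hyperplane geometry together with $\xi_0 \in \partial E$ — the mean-value constraint $\frac{1}{|\Omega|}\int_\Omega \nabla u\,dx = \xi_0$ forces $\nabla u(x) = \xi_0$ a.e., whence $u = u_{\xi_0}$. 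I expect the delicate point to be handling the case where the supporting face is not a single point; there one reduces the dimension by restricting to the affine hull of the face and iterates the separation argument, which terminates because the dimension strictly decreases at each step.
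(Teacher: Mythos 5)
Your reduction is sound up to and including the averaging step: $\inf(P)=f(\xi_0)$ by supremal Jensen, any solution satisfies $\nabla u\in E:=L_{f(\xi_0)}(f)$ a.e.\ by \eqref{solutionsforlevelconvexproblems}, and, under the standing assumption $\xi_0\notin {\rm int}\,E$, a supporting hyperplane $\langle\alpha,\eta-\xi_0\rangle\le 0$ on $E$ combined with $\int_\Omega(\nabla u(x)-\xi_0)\,dx=0$ gives $\langle\alpha,\nabla u(x)-\xi_0\rangle=0$ a.e. The genuine gap is in how you close from there. The claim that $\xi_0$ is an extreme point of the exposed face $F=E\cap\{\eta:\ \langle\alpha,\eta\rangle=\langle\alpha,\xi_0\rangle\}$, ``guaranteed by the supporting-hyperplane geometry together with $\xi_0\in\partial E$,'' is false: take $E=[0,1]^2$ and $\xi_0=(1/2,1)$; then $F$ is the whole top edge and $\xi_0$ is its midpoint. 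Your fallback of iterating the separation inside ${\rm aff}(F)$ does not rescue this: after the first step $\xi_0$ may lie in the \emph{relative interior} of $F$ (as in this example), so no further separation exists, and at that point the mean-value constraint alone cannot force $\nabla u=\xi_0$ a.e.\ --- any measurable $F$-valued map with mean $\xi_0$ satisfies every constraint your argument is still using. What is indispensable, and what your plan never invokes, is the gradient structure: $\nabla u$ is the gradient of a function agreeing with $u_{\xi_0}$ on $\partial\Omega$.

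The repair is short. Once $\langle\nabla u(x)-\xi_0,\alpha\rangle=0$ a.e.\ is established, set $v:=u-u_{\xi_0}\in W_0^{1,\infty}(\Omega)$; its directional derivative in the direction $\alpha$ vanishes a.e., so $v$ is constant along segments parallel to $\alpha$, and since $\Omega$ is bounded and $v=0$ on $\partial\Omega$, this forces $v\equiv 0$, i.e.\ $u=u_{\xi_0}$, completing your contraposition with no extreme-point or iteration argument at all. This is precisely the lemma the paper itself uses for steps of this kind (the argument of \cite[Theorem 5.1]{DM95}, or \cite[Lemma 11.17]{D}, invoked in Theorem \ref{thm strict level conv} and in Example \ref{notalwaysexistence}). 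For comparison, the paper's own proof of the proposition does not argue by contraposition: it notes that $\xi_0\in\overline{{\rm co}\,S}$ for $S$ the essential range of $\nabla u$ and cites the argument of \cite[Theorem 10.24]{D} (the proof behind Theorem \ref{Necdiffincl}), whose content is exactly the dichotomy ``either $u=u_{\xi_0}$ or $\xi_0\in{\rm int}\,{\rm co}\,S$''; your separation-plus-averaging computation is in effect an unrolling of that proof, but it must end with the directional-derivative lemma rather than with convex geometry of the face.
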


\begin{proof} Since $f$ is a level convex function, by the supremal Jensen's inequality $\inf (P)=f(\xi_0)$. Therefore, if $u$ is a solution to $(P)$, $u\neq u_{\xi_0}$, then, by \eqref{solutionsforlevelconvexproblems}, $f(\nabla u(x))\le f(\xi_0), \ \forall\ x\in\Omega\setminus A,$ where $A$ is a null measure subset of $\Omega$. Thus  $$S:=\{\nabla u(x):\ x\in\Omega\setminus A\}\subset\{\xi:\ f(\xi)\le f(\xi_0)\},$$ this last set being convex.
On the other hand, since $$\xi_0=\frac{1}{|\Omega|}\int_\Omega \nabla u(x)\,dx,$$ we get $\xi_0\in \overline{\mathrm{co}S}.$
Repeating the argument of Theorem 10.24 in \cite{D}, we get $\xi_0\in \mathrm{int}\,\overline{\mathrm{co}S}=\mathrm{int}\,\mathrm{co}S\subset \mathrm{int}\{\xi:\ f(\xi)\le f(\xi_0)\}.$
\end{proof}

Now, we pass to the case of non-level convex problems. In the same spirit as before, we have the following result. We observe that, in this case, we need to assume the continuity of the function in order to apply the relaxation theorem of the previous section.

\begin{theorem}\label{NSC} Let $\Omega \subset \mathbb R^n$ be a bounded open set with Lipschitz boundary and let
 $f: \mathbb R^n \to \mathbb R$ be a continuous function satisfying (\ref{alphacoerciveness}). Let $u_0 \in W^{1,\infty}(\Omega)$ and let $(P)$ and $(P^{\rm lc})$ be the problems (\ref{problemP}) and (\ref{problemPlc}), respectively.

Then problem $\left(  P\right)$
has a
solution if and only if there exists $u\in u_0+W_{0}^{1,\infty}(\Omega)$ such that
\begin{equation}\label{NSCeq}
f(\nabla u\left(x\right))\le \inf (P^{\rm lc}),\ \text{a.e.}\ x\in \Omega.
\end{equation}
Moreover, if \begin{equation}\label{eqmaisuma}\nabla u_0(x) \in L_{\inf(P^{\rm lc})}(f) \cup {\rm int}\,L_{\inf(P^{\rm lc})}(f^{\rm lslc}),\ \text{a.e.}\ x\in \Omega\end{equation} then $(P)$ has a solution.

In particular, if $u_0$ is affine, say $u_0=u_{\xi_0}$ with $\nabla u_{\xi_0}(x)= \xi_0 \in \mathbb R^n$, then condition \eqref{NSCeq} reads
\begin{equation}\label{NSCeqaffine}
f(\nabla u\left(  x\right))\le f^{\rm lslc}(\xi_0),\ \text{a.e.}\ x\in \Omega.
\end{equation}
Moreover, still under the assumption that $u_0=u_{\xi_0}$ is affine, problem (P) admits a solution if and only if
\begin{equation}\label{bcincluded}
\xi_0 \in L_{f^{\rm lslc}(\xi_0)}(f) \cup {\rm int}\,L_{f^{\rm lslc}(\xi_0)}(f^{\rm lslc}).
\end{equation}
\end{theorem}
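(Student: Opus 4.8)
The plan is to reduce the entire statement to two facts already at our disposal: the equality $\inf(P)=\inf(P^{\rm lc})$ from Corollary~\ref{Plc=P}, and the necessary and sufficient conditions for scalar differential inclusions in Theorems~\ref{Necdiffincl} and~\ref{Suffdiffincl}. First I would settle the main equivalence. If some $u\in u_0+W^{1,\infty}_0(\Omega)$ satisfies \eqref{NSCeq}, then $\operatorname*{ess\,sup}_{x\in\Omega}f(\nabla u(x))\le \inf(P^{\rm lc})=\inf(P)$ by Corollary~\ref{Plc=P}; since $u$ is admissible for $(P)$ the reverse inequality is automatic, so $u$ is a minimizer of $(P)$. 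Conversely, if $u$ solves $(P)$ then $f(\nabla u(x))\le \operatorname*{ess\,sup}_{x\in\Omega}f(\nabla u(x))=\inf(P)=\inf(P^{\rm lc})$ for a.e.\ $x$, which is exactly \eqref{NSCeq}. The affine reformulation \eqref{NSCeqaffine} is then immediate, since Corollary~\ref{Plc=P} gives $\inf(P^{\rm lc})=f^{\rm lslc}(\xi_0)$ when $u_0=u_{\xi_0}$.

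The geometric identity that couples the differential inclusion results to the relaxed density is the description of convex hulls of level sets. Writing $c:=\inf(P^{\rm lc})$ and $E:=L_c(f)$, the standing hypotheses (continuity of $f$ together with the coercivity \eqref{alphacoerciveness}, which gives $\lim_{|\xi|\to\infty}f(\xi)=+\infty$ and $f^{\rm lc}>-\infty$) place us in the setting of Theorem~\ref{Caratheodory for f lc}, so that $f^{\rm lc}$ is continuous and $f^{\rm lslc}=f^{\rm lc}$. Corollary~\ref{Corollary to Caratheodory} then yields ${\rm co}\,E=L_c(f^{\rm lc})=L_c(f^{\rm lslc})$, a closed convex set, whence ${\rm int}\,{\rm co}\,E={\rm int}\,L_c(f^{\rm lslc})$. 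Consequently the set $E\cup{\rm int}\,{\rm co}\,E$ occurring in the differential inclusion theorems coincides with $L_c(f)\cup{\rm int}\,L_c(f^{\rm lslc})$, the set in \eqref{eqmaisuma} (and, for affine data with $c=f^{\rm lslc}(\xi_0)$, in \eqref{bcincluded}).

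With this identification the sufficiency statement follows by applying Theorem~\ref{Suffdiffincl} with $\varphi=u_0$ and the set $E$: hypothesis \eqref{eqmaisuma} says precisely that $\nabla u_0(x)\in E\cup{\rm int}\,{\rm co}\,E$ a.e., so the theorem provides $u\in u_0+W^{1,\infty}_0(\Omega)$ with $\nabla u(x)\in E$, i.e.\ $f(\nabla u(x))\le c=\inf(P^{\rm lc})$ a.e.; by the first equivalence, $(P)$ has a solution. For the affine equivalence \eqref{bcincluded} the direction $(\Leftarrow)$ is the special case $u_0=u_{\xi_0}$, $\nabla u_0\equiv\xi_0$. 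For the converse I would invoke the necessary condition: if $(P)$ has a solution, the first equivalence produces $u\in u_{\xi_0}+W^{1,\infty}_0(\Omega)$ with $\nabla u(x)\in E=L_{f^{\rm lslc}(\xi_0)}(f)$ a.e., and Theorem~\ref{Necdiffincl} then forces $\xi_0\in E\cup{\rm int}\,{\rm co}\,E=L_{f^{\rm lslc}(\xi_0)}(f)\cup{\rm int}\,L_{f^{\rm lslc}(\xi_0)}(f^{\rm lslc})$, which is \eqref{bcincluded}.

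The bulk of the argument is therefore an assembly of earlier results; the one step demanding care is the geometric identity of the second paragraph, namely that the convex-hull operation generated by the differential inclusion theorems (which sees $f$ only through its level sets) produces exactly the level sets of the relaxed density $f^{\rm lslc}$. This rests essentially on the coincidence $f^{\rm lc}=f^{\rm lslc}$ and on the continuity of $f^{\rm lc}$, both consequences of the coercivity assumption \eqref{alphacoerciveness}. Without continuity of $f^{\rm lc}$ one would only obtain \emph{closed} convex hulls of level sets, and an extra argument would be needed to ensure that passing to interiors does not change the sets $E\cup{\rm int}\,{\rm co}\,E$; the continuity secured by Theorem~\ref{Caratheodory for f lc} is what removes this difficulty.
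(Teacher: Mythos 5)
Your proposal is correct and follows essentially the same route as the paper: the first equivalence via Corollary \ref{Plc=P}, the sufficiency of \eqref{eqmaisuma} via Theorem \ref{Suffdiffincl} together with Corollary \ref{Corollary to Caratheodory} applied to $E=L_{\inf(P^{\rm lc})}(f)$, and the affine characterization \eqref{bcincluded} via Theorems \ref{Necdiffincl} and \ref{Suffdiffincl}. The only difference is that you spell out explicitly the identification ${\rm co}\,E=L_c(f^{\rm lslc})$ (using $f^{\rm lc}=f^{\rm lslc}$ and the continuity of $f^{\rm lc}$ from Theorem \ref{Caratheodory for f lc}), which the paper leaves implicit in its citation of Corollary \ref{Corollary to Caratheodory}.
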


\begin{remark}(i) By the supremal Jensen's inequality, the existence of $u\in u_0+W_{0}^{1,\infty}(\Omega)$  satisfying \eqref{NSCeqaffine} is equivalent to the existence of $u\in u_0+W_{0}^{1,\infty}(\Omega)$ such that $$\operatorname*{ess\,sup}_{x\in\Omega}f\left(\nabla u\left(x\right)  \right)=f^{\rm lslc}(\xi_0).$$

(ii) Observe that, for general $u_0$, \eqref{eqmaisuma} is only a sufficient condition, while, for affine functions $u_0$ it is necessary and sufficient (cf. \eqref{bcincluded}).

(iii) If $u_0$ is a Lipschitz function only defined on the boundary of $\Omega$, $u_0:\partial\Omega\longrightarrow\mathbb{R}$, then condition \eqref{eqmaisuma} can be replaced by condition (2.62) in \cite[Theorem 2.17]{DaMa} to get a sufficient condition. Notice that, from \eqref{NSCeq}, we need to solve the differential inclusion $\nabla u\in E$ where $E=\{\xi\in\mathbb{R}^n:\ f(\xi)\le  \inf (P^{\rm lc})\}$.
\end{remark}

\begin{proof} We start proving the first equivalence stated in the theorem. Let $u \in u_0+W^{1,\infty}_0(\Omega) $ be a solution to problem $(P)$. Then, by Corollary \ref{Plc=P}, it results that
$$
\operatorname*{ess\,sup}_{x\in\Omega}f\left(  \nabla u\left(
x\right)  \right)=\inf (P^{\rm lc}),
$$
hence $f(\nabla u(x))\le \inf (P^{\rm lc})$ for a.e. $x \in \Omega$.

To prove the reverse implication, it is enough to observe that $$\inf (P^{\rm lc})\le \inf(P).$$

Then, that condition \eqref{eqmaisuma} is sufficient for existence of solutions to $(P)$, follows from \eqref{NSCeq}, Theorem \ref{Suffdiffincl}, and from Corollary \ref{Corollary to Caratheodory}, where the set $E$ is given by $L_{\inf (P^{\rm lc})}(f)$.

Regarding condition (\ref{NSCeqaffine}), it suffices to observe that, by the supremal Jensen's inequality, cf. Theorem \ref{Jensensupremalscalar}, $f^{\rm lslc}(\xi_0)=\inf (P^{\rm lc})$.

For what concerns the last statement, condition \eqref{bcincluded}, we observe that it follows from Theorems \ref{Necdiffincl} and \ref{Suffdiffincl} and from Corollary \ref{Corollary to Caratheodory}, where the set $E$ is given by $L_{f^{\rm lslc}(\xi_0)}(f)$.
\end{proof}\medskip

As we will see in the next result, in dimension $n=1$, for sufficiently regular $f$, problem $(P)$ with an affine boundary condition, always admits a solution. This is not true for $n>1$ as Example \ref{notalwaysexistence} shows.

\begin{corollary}\label{existencedim1}
Let $\Omega \subset \mathbb R$ be a bounded open set and let $f:\mathbb R \to \mathbb{R}$ be a continuous function satisfying \eqref{alphacoerciveness}.  Then problem $(P)$ introduced in \eqref{problemP} admits a solution for every $u_0:\partial\Omega\longrightarrow\mathbb{R}$.
\end{corollary}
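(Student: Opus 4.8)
The plan is to reduce everything to the existence criterion of Theorem \ref{NSC} and to verify its sufficient condition \eqref{eqmaisuma} for a conveniently chosen extension of the boundary datum. Since $\Omega\subset\mathbb{R}$ is a bounded open set, I would write it as an at most countable union of pairwise disjoint open intervals, $\Omega=\bigcup_i (a_i,b_i)$, and extend the Lipschitz datum $u_0:\partial\Omega\to\mathbb{R}$ to $\Omega$ by affine interpolation on each interval, so that $\nabla u_0\equiv s_i:=\frac{u_0(b_i)-u_0(a_i)}{b_i-a_i}$ on $(a_i,b_i)$; the slopes $s_i$ are uniformly bounded by the Lipschitz constant of $u_0$, hence $u_0\in W^{1,\infty}(\Omega)$. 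I would stress that whether $(P)$ is solvable depends only on the boundary trace, since the affine space $u_0+W^{1,\infty}_0(\Omega)$ does, so the freedom in the extension is exploited only to make condition \eqref{eqmaisuma} transparent.

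Next I would set $m:=\inf(P^{\rm lc})$, noting $m<+\infty$ by testing with $u_0$ itself. Because $f$ is continuous and \eqref{alphacoerciveness} forces $\lim_{|\xi|\to+\infty}f(\xi)=+\infty$, Theorem \ref{Caratheodory for f lc} gives that $f^{\rm lc}$ is continuous with $f^{\rm lslc}=f^{\rm lc}$, while Corollary \ref{Corollary to Caratheodory} yields ${\rm co}\,L_c(f)=L_c(f^{\rm lc})=L_c(f^{\rm lslc})$ for every $c$. I would then show $f^{\rm lslc}(s_i)\le m$ for every $i$: for any competitor $u\in u_0+W^{1,\infty}_0(\Omega)$ the perturbation vanishes at the endpoints, so $\frac{1}{b_i-a_i}\int_{a_i}^{b_i}\nabla u\,dx=s_i$, and the supremal Jensen inequality (Theorem \ref{Jensensupremalscalar}) gives $\operatorname*{ess\,sup}_{(a_i,b_i)} f^{\rm lslc}(\nabla u)\ge f^{\rm lslc}(s_i)$, whence $\operatorname*{ess\,sup}_\Omega f^{\rm lslc}(\nabla u)\ge f^{\rm lslc}(s_i)$; taking the infimum over $u$ proves the claim. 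In particular $s_i\in L_m(f^{\rm lslc})={\rm co}\,L_m(f)$.

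The decisive, genuinely one-dimensional step is the description of ${\rm co}\,L_m(f)$. Under the standing hypotheses $L_m(f)=\{f\le m\}$ is nonempty (it contains some $s_i$), closed and bounded, so in $\mathbb{R}$ its convex hull is the compact interval $[\alpha,\beta]$ with $\alpha=\min L_m(f)$ and $\beta=\max L_m(f)$, and, crucially, both endpoints belong to $L_m(f)$ itself, i.e. $f(\alpha)\le m$ and $f(\beta)\le m$. Consequently, for each $i$ either $s_i\in\{\alpha,\beta\}\subset L_m(f)$, or $s_i\in(\alpha,\beta)={\rm int}\,{\rm co}\,L_m(f)={\rm int}\,L_m(f^{\rm lslc})$; in both cases $\nabla u_0(x)=s_i\in L_m(f)\cup{\rm int}\,L_{m}(f^{\rm lslc})$ a.e. on $(a_i,b_i)$, hence a.e. on $\Omega$. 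This is exactly \eqref{eqmaisuma}, so Theorem \ref{NSC} furnishes a solution of $(P)$.

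I expect the only real obstacle to be isolating the fact that makes dimension one special: the whole topological boundary of ${\rm co}\,L_m(f)$ lies inside $L_m(f)$, because in $\mathbb{R}$ the convex hull of a compact set only adds interior points while keeping its two extreme points in the set. In dimension $n\ge 2$ this breaks down, convexification can create boundary points of ${\rm co}\,L_m(f)$ lying strictly outside $L_m(f)$, for which the differential inclusion $\nabla u\in L_m(f)$ has no solution, which is precisely why existence may fail there, as recorded in Example \ref{notalwaysexistence}.
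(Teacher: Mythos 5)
Your strategy is sound and, in its essential ingredients, coincides with the paper's: affine interpolation of the datum on each component interval, the one-dimensional fact that the two endpoints of the convex hull of a compact set belong to the set itself, and the differential-inclusion criterion of Theorem \ref{NSC}. The execution, however, is genuinely different. The paper reduces to an affine problem on each interval $\Omega_i$, verifies the necessary-and-sufficient condition \eqref{bcincluded} there with the interval-wise level $f^{\rm lslc}(\xi_i)$, and then patches the resulting solutions $u_i$ together; you instead verify the global sufficient condition \eqref{eqmaisuma} with the single level $m=\inf(P^{\rm lc})$ (via the supremal Jensen inequality, Theorem \ref{Jensensupremalscalar}, applied on each component) and invoke Theorem \ref{NSC} once. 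Your route buys you something: it avoids the patching step, which the paper leaves implicit and which requires checking a uniform Lipschitz bound on the $u_i$ and membership of the glued function in $u_0+W^{1,\infty}_0(\Omega)$.

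Two points need attention. The substantive one: Theorem \ref{NSC} is stated for bounded open sets \emph{with Lipschitz boundary}, and a general bounded open $\Omega\subset\mathbb{R}$ --- the whole point of the corollary --- need not have one (think of the complement of a fat Cantor set in $(0,1)$: infinitely many components accumulating, boundary of positive measure). As written, you cite Theorem \ref{NSC} outside its hypotheses; the paper avoids this precisely by applying it only on the single intervals $\Omega_i$, which trivially have Lipschitz boundary. Your argument is repairable without reverting to patching, because the implication you actually use --- \eqref{eqmaisuma} implies existence --- does not need the boundary regularity: its proof rests only on Theorem \ref{Suffdiffincl} (any bounded open set), Corollary \ref{Corollary to Caratheodory}, and the trivial inequality $\inf(P^{\rm lc})\le\inf(P)$, whereas the Lipschitz boundary enters only in the opposite (necessity) direction through the relaxation Corollary \ref{Plc=P}; but this must be said explicitly. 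The minor one: your parenthetical justification that $L_m(f)\neq\emptyset$ ``because it contains some $s_i$'' is false. Take the one-dimensional analogue of Example \ref{notalwaysexistence}, $f(\xi)=(\xi^2-1)^2$ on $\Omega=(0,1)$ with $u_0(0)=u_0(1)=0$: then $s_1=0$ and $m=f^{\rm lslc}(0)=0$, yet $f(0)=1$, so no slope lies in $L_m(f)$. Nonemptiness is nevertheless immediate from what you already proved, since $s_i\in{\rm co}\,L_m(f)$ and the convex hull of the empty set is empty (alternatively, $f$ attains its minimum by coercivity and continuity, and $\min f=\min f^{\rm lslc}\le m$).
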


\begin{proof} Let $u_0:\partial\Omega\longrightarrow\mathbb{R}$. Since $\Omega\subset \mathbb{R}$, we can write $\displaystyle\Omega=\cup_{i\in\mathbb{N}}\Omega_i$ with $\Omega_i$ disjoint open intervals. In each of these intervals consider the affine functions $u_{\xi_i}$, for some $\xi_i\in\mathbb{R}$, such that $u_{\xi_i}=u_0$ on $\partial \Omega_i.$ Once proved the existence of solution to each problem
$$(P_i)\quad\inf\left\{ \operatorname*{ess\,sup}_{x\in\Omega_i}f\left( u'(x) \right):\ u\in u_{\xi_i}+W_{0}^{1,\infty}(\Omega_i)\right\},$$ say $u_i\in u_{\xi_i}+W_{0}^{1,\infty}(\Omega_i)$, one gets the existence of solution to $(P)$ patching together the functions $u_i$ in each interval $\Omega_i$.

It remains to prove the existence of solution to each problem $(P_i)$. To achieve this it will be enough to show that every $\xi_i\in \mathbb R$ verifies \eqref{bcincluded} with $\xi_0$ replaced by $\xi_i$. Of course, $\xi_i\in L_{f^{\rm lslc}(\xi_i)}(f^{\rm lslc})=\mathrm{co}\,L_{f^{\rm lslc}(\xi_i)}(f)$, by Corollary \ref{Corollary to Caratheodory}. If $\xi_i \in L_{f^{\rm lslc}(\xi_i)}(f)$, the existence is proven. So, without loss of generality, we may assume that $\xi_i \in {\rm co}\,L_{f^{\rm lslc}(\xi_i)}(f)\setminus L_{f^{\rm lslc}(\xi_i)}(f)$. Since we are working on the real line, we immediately conclude that  $\xi_i \in {\rm int}\,L_{f^{\rm lslc}(\xi_i)}(f^{\rm lslc})={\rm int}\,\mathrm{co}\,L_{f^{\rm lslc}(\xi_i)}(f)$: in $\mathbb{R}$, the elements of the boundary of a convex hull belong either to the original set or to the complement of the convex hull. Thus we proved \eqref{bcincluded} and the proof is finished.
\end{proof}

The following result provides a sufficient condition for existence of solutions to a non-level convex problem with more general boundary data.

\begin{theorem}\label{Theoremexistenceregular}
Let $\Omega \subset \mathbb R^n$ be a bounded open set with Lipschitz boundary and let $f: \mathbb R^n \to \mathbb R$ be a continuous function satisfying (\ref{alphacoerciveness}). Let $u_0 \in  W^{1,\infty}(\Omega)$ and let $(P)$ and $(P^{\rm lc})$ be the problems (\ref{problemP}) and (\ref{problemPlc}), respectively. Assume that problem $(P^{\rm lc})$ admits a solution $\overline{u}\in C^1_{piec}(\Omega)$.

Then, if $f^{\rm lslc}$ is constant in each connected component of the set where $f^{\rm lslc}< f$, problem $(P)$ has a solution.
\end{theorem}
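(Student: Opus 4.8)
The plan is to verify the sufficient condition \eqref{eqmaisuma} from Theorem \ref{NSC} pointwise almost everywhere, using the regularity of the relaxed solution $\overline{u}$ together with the constancy hypothesis on $f^{\rm lslc}$. Since $\overline{u}$ solves $(P^{\rm lc})$, Corollary \ref{Plc=P} gives $\operatorname*{ess\,sup}_{x\in\Omega}f^{\rm lslc}(\nabla\overline{u}(x))=\inf(P^{\rm lc})=:c$, so that $f^{\rm lslc}(\nabla\overline{u}(x))\le c$ for a.e.\ $x\in\Omega$, i.e.\ $\nabla\overline{u}(x)\in L_c(f^{\rm lslc})$. The goal is to show that $\overline{u}$ itself can be taken as the function $u$ in \eqref{eqmaisuma}, that is
\begin{equation}\nonumber
\nabla\overline{u}(x)\in L_c(f)\cup{\rm int}\,L_c(f^{\rm lslc}),\ \text{a.e.}\ x\in\Omega,
\end{equation}
and then invoke Theorem \ref{NSC} (in its general, not affine, form) to conclude that $(P)$ has a solution.

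The key steps are as follows. First I would partition $\Omega$ (up to a null set) according to where $f^{\rm lslc}(\nabla\overline{u}(x))$ meets $f(\nabla\overline{u}(x))$. On the set where $f^{\rm lslc}(\nabla\overline{u}(x))=f(\nabla\overline{u}(x))$ one has $f(\nabla\overline{u}(x))=f^{\rm lslc}(\nabla\overline{u}(x))\le c$, so $\nabla\overline{u}(x)\in L_c(f)$ and \eqref{eqmaisuma} holds trivially there. The delicate set is $U:=\{x\in\Omega:\ f^{\rm lslc}(\nabla\overline{u}(x))<f(\nabla\overline{u}(x))\}$, on which I must show $\nabla\overline{u}(x)\in{\rm int}\,L_c(f^{\rm lslc})$. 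Here is where the $C^1_{piec}$ regularity enters: on each piece $\overline{u}$ is $C^1$, so $\nabla\overline{u}$ is continuous, and I would work on a connected component $\O$ of the open set where $\nabla\overline{u}$ takes values in the open set $\{\xi:\ f^{\rm lslc}(\xi)<f(\xi)\}$. By the constancy hypothesis, $f^{\rm lslc}$ is constant on the connected component of $\{f^{\rm lslc}<f\}$ containing $\nabla\overline{u}(\O)$; call this constant value $a$. Thus $f^{\rm lslc}(\nabla\overline{u}(x))=a\le c$ for $x\in\O$, and since $a$ is attained on an open (in $\mathbb{R}^n$) region where $f^{\rm lslc}$ is constant, every such $\nabla\overline{u}(x)$ lies in the interior of $L_a(f^{\rm lslc})\subseteq L_c(f^{\rm lslc})$, giving the required interiority.

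The main obstacle I expect is making the passage from ``$f^{\rm lslc}$ is constant on a connected component of $\{f^{\rm lslc}<f\}$'' to ``$\nabla\overline{u}(x)\in{\rm int}\,L_c(f^{\rm lslc})$'' fully rigorous, since a priori the constant value $a$ could equal $c$ and the relevant point could sit on $\partial L_c(f^{\rm lslc})$ rather than its interior. To handle this I would argue that if $\xi$ belongs to an open set on which $f^{\rm lslc}$ is identically equal to the constant $a$, then a whole neighborhood of $\xi$ is contained in $L_a(f^{\rm lslc})\subseteq L_c(f^{\rm lslc})$, so $\xi\in{\rm int}\,L_c(f^{\rm lslc})$ regardless of whether $a<c$ or $a=c$; the fact that the component of $\{f^{\rm lslc}<f\}$ is genuinely open in $\mathbb{R}^n$ (being a sublevel-type difference of the continuous functions $f$ and $f^{\rm lslc}$, which is continuous here by Theorem \ref{Caratheodory for f lc}) is what guarantees the neighborhood. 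A secondary technical point is the almost-everywhere bookkeeping across the finitely (or countably) many $C^1$ pieces and the measure-zero junction set, together with handling the set where $\nabla\overline{u}$ lands exactly on the boundary between $\{f^{\rm lslc}=f\}$ and $\{f^{\rm lslc}<f\}$; on that boundary continuity forces $f^{\rm lslc}(\nabla\overline{u})=f(\nabla\overline{u})\le c$, placing those points back in $L_c(f)$. Once \eqref{eqmaisuma} is established a.e., Theorem \ref{NSC} delivers the solution to $(P)$ and the proof is complete.
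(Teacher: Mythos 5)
Your argument is correct, but it is genuinely different from the paper's, and the comparison is worth spelling out. The paper works on the domain side: it takes the open components $A_i$ of $A=\{f^{\rm lslc}<f\}$ (bounded, by coercivity plus the constancy hypothesis), decomposes $\Omega$ into $\Omega_0=\{x:\nabla\overline{u}(x)\notin \cup_i A_i\}$ and $\Omega_i=\{x:\nabla\overline{u}(x)\in A_i\}$, solves on each $\Omega_i$ the inclusion $\nabla v_i\in\partial A_i$ with $v_i=\overline{u}$ on $\partial\Omega_i$ (Theorem \ref{Suffdiffincl}, using that the open bounded set $A_i$ lies in ${\rm int}\,{\rm co}\,\partial A_i$, and that $f=f^{\rm lslc}$ is constant on $\partial A_i$), and patches the $v_i$ with $\overline{u}$ to get a competitor satisfying \eqref{NSCeq}; this is exactly where the $C^1_{piec}$ hypothesis is used, since it makes each $\Omega_i$ open so that Theorem \ref{Suffdiffincl} can be applied there. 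You instead work entirely on the gradient side: the constancy hypothesis gives, pointwise, that any gradient value falling in a component $A_i$ has a whole ball of $A_i$ around it on which $f^{\rm lslc}$ equals a constant $a\le c:=\inf(P^{\rm lc})$, hence lies in ${\rm int}\,L_c(f^{\rm lslc})$ (your resolution of the $a=c$ worry is right), so $\nabla\overline{u}\in L_c(f)\cup{\rm int}\,L_c(f^{\rm lslc})$ a.e., and a single global application of the inclusion theorem with $E=L_c(f)$, ${\rm co}\,E=L_c(f^{\rm lslc})$ by Corollary \ref{Corollary to Caratheodory}, finishes. Two remarks. First, a small point of rigor: \eqref{eqmaisuma} in Theorem \ref{NSC} is stated for $\nabla u_0$, not $\nabla\overline{u}$, so you cannot cite it as a black box; the fix is one line --- apply Theorem \ref{Suffdiffincl} directly to $\varphi=\overline{u}$, observe that the resulting $u$ lies in $\overline{u}+W_0^{1,\infty}(\Omega)=u_0+W_0^{1,\infty}(\Omega)$ and satisfies \eqref{NSCeq}, and conclude by the equivalence in Theorem \ref{NSC}. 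Second, and more interestingly, your pointwise argument never actually uses continuity of $\nabla\overline{u}$ --- measurability suffices --- so the $C^1_{piec}$ regularity you invoke is superfluous in your route; since $(P^{\rm lc})$ always admits a minimizer under the standing hypotheses by the direct method (as in the proof of Corollary \ref{Plc=P}), your argument in fact proves the theorem without the $C^1_{piec}$ assumption, i.e., a stronger statement than the one in the paper, whereas the paper's constructive patching buys an explicit description of the solution but genuinely needs that regularity.
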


\begin{remark}
The same assertion can be proved under the weaker assumption that $f^{\rm lslc}$ is constant in the connected components of $\{\xi:\ f^{\rm lslc}(\xi)< f(\xi)\}$ whose intersection with $\{\nabla \overline{u}(x):\ x\in \Omega'\}$ is nonempty for some $\Omega'\subset\Omega$ with positive measure.
\end{remark}

\begin{proof} By the continuity hypothesis on $f$ and the coercivity condition (\ref{alphacoerciveness}), Theorem \ref{Caratheodory for f lc} implies that $f^{\rm lslc}$ is also continuous and thus the set
$$A:=\left\{\xi\in\mathbb{R}^n:\ f^{\rm lslc}(\xi)<f(\xi)\right\}$$ is open. Therefore we can split $A$ in a disjoint countable union of open sets $A_i$, $i\in\mathbb{N}$. By hypothesis, in each of these sets $A_i$, the function $f^{\rm lslc}$ is constant. Since the function $\xi\mapsto\gamma(|\xi|)$ is level convex and continuous, $f^{\rm lslc}\ge \gamma(|\cdot|)$. Thus $\displaystyle \lim_{|\xi|\to +\infty}f^{\rm lslc}(\xi)=+\infty$ and this, together with the fact that $f^{\rm lslc}$ is constant in $A_i$ implies that the sets $A_i$ are bounded.

Now, we split the set $\Omega$ in several parts:
$$\begin{array}{l}\displaystyle\Omega_0:=\left\{x\in\Omega:\ \nabla\overline{u}(x)\notin\cup_{i\in\mathbb{N}} A_i\right\}\vspace{0.2cm}\\ \Omega_i:=\left\{x\in\Omega:\ \nabla\overline{u}(x)\in A_i\right\},\ i\in\mathbb{N} \end{array}$$
 and we construct the solution in the following way. For each $i\in\mathbb{N}$ consider $v_i\in \overline{u}+W_0^{1,\infty}(\Omega_i)$ such that  $\nabla v_i\in\partial A_i$, this exists by Theorem \ref{Suffdiffincl} and since $A_i$ being open and bounded, is contained in ${\rm int}\,{\rm co}\,\partial A_i$.

Define $$u(x):=\left\{\begin{array}{l}\overline{u}(x),\text{ if }x\in\Omega_0,\vspace{0.2cm}\\ v_i(x),\text{ if }x\in\Omega_i,\ i\in\mathbb{N}.\end{array}\right.$$ One has $\overline{u}\in u_0+W_0^{1,\infty}(\Omega)$. Moreover, since $f^{\rm lslc}$ is constant on each $A_i$, by the continuity of $f^{\rm lslc}$, it is constant on $\overline{A_i}$. On the other hand, since $A_i$ are open, on their boundary, $f^{\rm lslc}$ coincides with $f$. Therefore, since $\nabla v_i\in\partial A_i$,
$$\displaystyle\operatorname*{ess\,sup}_{x\in\Omega_i}f\left(\nabla v_i\left(
x\right)\right)=\operatorname*{ess\,sup}_{x\in\Omega_i}f^{\rm lslc}\left(\nabla v_i\left(
x\right)\right)=\operatorname*{ess\,sup}_{x\in\Omega_i}f^{\rm lslc}\left(\nabla \overline{u}\left(
x\right)\right)$$
where we have used the fact that $\nabla\overline{u}\in A_i$ in $\Omega _i$.

Therefore $$\begin{array}{l}
\displaystyle\operatorname*{ess\,sup}_{x\in\Omega}f\left(\nabla u\left(
x\right)\right)=\max\left\{\operatorname*{ess\,sup}_{x\in\Omega_0}f\left(\nabla \overline{u}\left(
x\right)\right),\ \operatorname*{ess\,sup}_{x\in\Omega_i}f\left(\nabla v_i\left(
x\right)\right),\ i\in\mathbb{N}  \right\}\vspace{0.2cm}\\
\displaystyle\phantom{\operatorname*{ess\,sup}_{x\in\Omega}f\left(\nabla u\left(
x\right)\right)}=\operatorname*{ess\,sup}_{x\in\Omega}f^{\rm lslc}\left(\nabla \overline{u}\left(
x\right)\right)=\inf(P^{\rm lc})\end{array}$$
and the existence of solution to problem $(P)$ follows from Theorem \ref{NSC}.\end{proof}\medskip

The following example shows many of the features of the results stated in this section.

\begin{example}\label{notalwaysexistence} Let $f:(\xi_1,\xi_2)\in \mathbb R^2 \to (\xi_1^2-1)^2 + \xi_2^2 \in \mathbb R$.
Clearly $f\geq 0$, and $f(\xi_1,\xi_2)= 0$ if and only if $(\xi_1,\xi_2)=\pm (1,0)$. Since $f\ge 0$ then $f^{\rm lslc}\ge 0$, and thus, using Corollary \ref{Corollary to Caratheodory}, $$\left\{f^{\rm lslc}=0\right\}=L_0(f^{\rm lslc})={\rm co}\,L_0(f)=[-1,1]\times \{0\},$$ which has empty interior.

This example also shows that in dimension $n>1$ there are cases where the gradient of the boundary datum doesn't belong neither to $L_{f^{\rm lslc}(\xi_0)}(f)$ nor to ${\rm int }\,L_{f^{\rm lslc}(\xi_0)}(f^{\rm lslc})$
and thus the inclusion \eqref{bcincluded} doesn't admit any solution. By Theorem \ref{NSC}, if $u_0= u_{\xi_0}$ is such that $\nabla u_0(x)\equiv\xi_0 \in (-1,1)\times \{0\}$, considering the minimizing problem $(P)$ related to the function $f$ with the boundary data $u_0= u_{\xi_0}$, we can ensure that $(P)$ doesn't have a solution.

We also observe that, in this example, problem $(P^{\rm lc})$ with affine boundary condition $u_0= u_{\xi_0}$, such that $\nabla u_0(x)\equiv\xi_0 \in (-1,1)\times \{0\}$ has exactly one solution. Indeed, if $u\in u_0+W_0^{1,\infty}(\Omega)$ is a solution to $(P^{\rm lc})$, then $f^{\rm lslc}(\nabla u(x))=0$, a.e. $x\in\Omega$ and thus $\nabla u(x)\in [-1,1]\times \{0\}$, a.e. $x\in\Omega$. In particular, $\nabla u(x)-\xi_0$ is orthogonal to the vector $(0,1)$, a.e. $x\in\Omega$. It then follows by \cite[Lemma 11.17]{D}, that $u\equiv u_{\xi_0}$, showing that $u_{\xi_0}$ is the only solution to $(P^{\rm lc})$.

As we will see in Proposition \ref{Prop no solution}, the fact that $(P^{\rm lc})$ admits a unique solution, and the boundary condition is such that $\xi_0\in\{f^{\rm lslc}<f\}$ implies that $(P)$ has no solution.
\end{example}

We will now turn our attention to necessary conditions for existence of solutions to a non-level convex problem of the form $(P)$ with affine boundary data. Our strategy will be based on uniqueness of solutions to the relaxed problem $(P^{\rm lc})$, as considered for problems in the integral form by Marcellini \cite{Marcellini83}, Dacorogna-Marcellini \cite{DM95}, and Dacorogna-Pisante-Ribeiro \cite{DPR}. The basis to our research is the following result.

\begin{proposition}\label{Prop no solution} Let $\Omega \subset \mathbb R^n$ be a bounded open set with Lipschitz boundary and let $f: \mathbb R^n \to \mathbb R$ be a continuous function satisfying \eqref{alphacoerciveness}. Let $(P)$ and $(P^{\rm lc})$ be the problems \eqref{problemP} and \eqref{problemPlc}, respectively, with the affine boundary condition $u_{\xi_0}(x)=\xi_0\cdot x+c$. Assume that $f^{\rm lslc}(\xi_0)<f(\xi_0)$ and assume $f^{\rm lslc}$ satisfies the condition
$$\left.\begin{array}{c}  \displaystyle\operatorname*{ess\,sup}_{x\in\Omega}f^{\rm lslc}\left(\nabla u\left(x\right)  \right)=f^{\rm
            lslc}(\xi_0)  \vspace{0.2cm}\\
             u\in u_{\xi_0}+W_0^{1,\infty}(\Omega)

\end{array}\right\}\ \Rightarrow\ u=u_{\xi_0},$$
 which means that problem $(P^{\rm lc})$ has a unique solution.

Then problem $(P)$ has no solution.
\end{proposition}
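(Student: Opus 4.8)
The plan is to argue by contradiction. I would assume that $(P)$ admits a solution $u \in u_{\xi_0}+W_0^{1,\infty}(\Omega)$ and show that this forces $f(\xi_0)=f^{\rm lslc}(\xi_0)$, contradicting the standing hypothesis $f^{\rm lslc}(\xi_0)<f(\xi_0)$. The conceptual heart of the argument is to verify that \emph{any} minimizer of $(P)$ is automatically a minimizer of the relaxed problem $(P^{\rm lc})$; once this is established, the assumed uniqueness for $(P^{\rm lc})$ pins $u$ down to be $u_{\xi_0}$ itself, and the contradiction follows by direct evaluation.

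First I would record, via Corollary \ref{Plc=P} applied to the affine datum, that
$$\inf(P)=\inf(P^{\rm lc})=f^{\rm lslc}(\xi_0).$$
Hence, if $u$ solves $(P)$, then $\operatorname*{ess\,sup}_{x\in\Omega}f(\nabla u(x))=f^{\rm lslc}(\xi_0)$, which yields the pointwise bound $f(\nabla u(x))\le f^{\rm lslc}(\xi_0)$ for a.e. $x\in\Omega$. Using $f^{\rm lslc}\le f$ (Remark \ref{remark envelopes}), the same $u$ then satisfies $f^{\rm lslc}(\nabla u(x))\le f^{\rm lslc}(\xi_0)$ a.e., so that $\operatorname*{ess\,sup}_{x\in\Omega}f^{\rm lslc}(\nabla u(x))\le f^{\rm lslc}(\xi_0)$.

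For the reverse inequality I would invoke the supremal Jensen inequality of Theorem \ref{Jensensupremalscalar} for the level convex (and, by coercivity, finite and lower semicontinuous) function $f^{\rm lslc}$: since $u-u_{\xi_0}\in W_0^{1,\infty}(\Omega)$ one has $\frac{1}{|\Omega|}\int_\Omega \nabla u\,dx=\xi_0$, whence
$$f^{\rm lslc}(\xi_0)=f^{\rm lslc}\left(\frac{1}{|\Omega|}\int_\Omega \nabla u\,dx\right)\le \operatorname*{ess\,sup}_{x\in\Omega}f^{\rm lslc}(\nabla u(x)).$$
Combining the two bounds gives $\operatorname*{ess\,sup}_{x\in\Omega}f^{\rm lslc}(\nabla u(x))=f^{\rm lslc}(\xi_0)=\inf(P^{\rm lc})$, so $u$ is indeed a solution of $(P^{\rm lc})$.

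Finally, the uniqueness hypothesis in the statement applies and forces $u=u_{\xi_0}$, so $\nabla u(x)=\xi_0$ a.e. and therefore $\operatorname*{ess\,sup}_{x\in\Omega}f(\nabla u(x))=f(\xi_0)$. Comparing with the first step, where this supremum was identified with $f^{\rm lslc}(\xi_0)$, we obtain $f(\xi_0)=f^{\rm lslc}(\xi_0)$, contradicting $f^{\rm lslc}(\xi_0)<f(\xi_0)$. I expect the only subtle point to be the sandwiching that identifies a minimizer of $(P)$ with one of $(P^{\rm lc})$: it rests entirely on the mean-value identity $\frac{1}{|\Omega|}\int_\Omega\nabla u\,dx=\xi_0$ (a consequence of the vanishing boundary correction) feeding the supremal Jensen inequality, with the envelope inequality $f^{\rm lslc}\le f$ supplying the opposite estimate. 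The remaining manipulations are routine.
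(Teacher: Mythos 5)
Your proof is correct and follows essentially the same route as the paper: a putative minimizer of $(P)$ is shown, via Corollary \ref{Plc=P} and the inequality $f^{\rm lslc}\le f$, to be a minimizer of $(P^{\rm lc})$, whence the uniqueness hypothesis forces $u=u_{\xi_0}$ and contradicts $f^{\rm lslc}(\xi_0)<f(\xi_0)$. The only difference is that your supremal Jensen step is redundant (once $\operatorname*{ess\,sup}_{x\in\Omega}f^{\rm lslc}(\nabla u(x))\le f^{\rm lslc}(\xi_0)=\inf(P^{\rm lc})$ is known, the reverse inequality is automatic since $u$ is admissible), but it is harmless and in fact matches the literal form of the uniqueness hypothesis.
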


\begin{proof} Since $f^{\rm lslc}\le f$, if $u$ is a solution to $(P)$ then, by Corollary \ref{Plc=P}, it is also a solution to $(P^{\rm lc})$. Therefore, by the uniqueness of solutions to $(P^{\rm lc})$ stated in the hypothesis, $u=u_{\xi_0}$. This contradicts the fact that $f^{\rm lslc}(\xi_0)<f(\xi_0)$ and we conclude that a solution to $(P)$ cannot exist.\end{proof}\bigskip

In view of the result just stated, we want to find conditions ensuring uniqueness of solution to level convex problems of type $(P)$, defined in \eqref{problemP}. We start observing that the strict level convexity of the function related to the minimizing problem $(P)$ provides that uniqueness, if the boundary condition is affine. Indeed this follows from Remark \ref{weakMqcx} and applies to both the scalar and the vectorial cases.
It is interesting to observe that condition \eqref{swMqcx}, involved in Proposition \ref{Prop no solution} is the counterpart in the supremal setting of the notion of strict quasiconvexity at $\xi_0$ which guarantees uniqueness of solutions in the integral framework (see Definition 11.9 and Theorem 11.11 in \cite{D}.)
Therefore, if $f^{\rm lslc}$ was strictly level convex then, for affine boundary conditions, problem $(P^{\rm lc})$, defined in \eqref{problemP}, would have a unique solution and problem $(P)$, defined in \eqref{problemPlc}, would have no solution. However, we show in the next proposition that we can't expect $f^{\rm lslc}\neq f$ to be strictly level convex and thus we will work with a weaker notion.

\begin{proposition}
Let $f:\mathbb{R}^n\to\mathbb{R}$ be a continuous function such that $f^{\rm lc}>-\infty$ and $\displaystyle \lim_{|\xi|\to +\infty}f(\xi)=+\infty$ and let $\xi_0\in\mathbb{R}^n$ be such that $f^{\rm lslc}(\xi_0)<f(\xi_0).$ Then $f^{\rm lslc}$ is constant in a segment line containing $\xi_0$ (possibly $\xi_0$ is an extremity of the segment line).
\end{proposition}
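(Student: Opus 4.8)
The plan is to exploit the Carath\'eodory-type representation of $f^{\rm lslc}$ furnished by Theorem~\ref{Caratheodory for f lc} and then reduce the whole question to a one-dimensional statement about level convex functions of one variable. First I would record that, since $f$ is continuous and $\lim_{|\xi|\to+\infty}f(\xi)=+\infty$, Theorem~\ref{Caratheodory for f lc} applies, so $f^{\rm lslc}=f^{\rm lc}$ and the defining infimum is attained. Writing $c:=f^{\rm lslc}(\xi_0)$, there are therefore $\lambda_i\ge 0$ with $\sum_{i=1}^{n+1}\lambda_i=1$ and points $\xi_i\in\mathbb{R}^n$ such that $\xi_0=\sum_{i=1}^{n+1}\lambda_i\xi_i$ and $c=\max_{1\le i\le n+1}f(\xi_i)$; in particular $f(\xi_i)\le c$ for every $i$. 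The crucial use of the hypothesis $f^{\rm lslc}(\xi_0)<f(\xi_0)$ enters here: if all the points $\xi_i$ carrying positive weight coincided with $\xi_0$, then $c=\max_i f(\xi_i)=f(\xi_0)$, a contradiction. Hence among the $\xi_i$ with $\lambda_i>0$ there are at least two distinct points, and since $\xi_0$ is their convex combination with strictly positive weights, $\xi_0$ lies in the relative interior of their convex hull.

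Consequently $\xi_0$ is not an extreme point of ${\rm co}\,\{\xi_i:\ \lambda_i>0\}$, so I can choose a line through $\xi_0$ and two points $\eta_1\ne\eta_2$ on it lying in this convex hull on opposite sides of $\xi_0$, giving $\xi_0=t\eta_1+(1-t)\eta_2$ for some $t\in(0,1)$. Because $\eta_1,\eta_2\in{\rm co}\,\{\xi_i\}$ and $f^{\rm lslc}$ is level convex with $f^{\rm lslc}\le f$, one has $f^{\rm lslc}(\eta_j)\le\max_i f^{\rm lslc}(\xi_i)\le\max_i f(\xi_i)=c$ for $j=1,2$, so both $\eta_1$ and $\eta_2$ lie in $L_c(f^{\rm lslc})$.

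The final step is the one-dimensional argument, which I expect to be the delicate point. I would parametrise the segment $[\eta_1,\eta_2]$ by setting $g(s):=f^{\rm lslc}((1-s)\eta_1+s\eta_2)$, so that $g$ is a level convex function of one variable with $g\le\max\{g(0),g(1)\}\le c$ on $[0,1]$, while $g(s_0)=c$ at the interior parameter $s_0$ corresponding to $\xi_0$. I would then argue by a dichotomy: if $g$ failed to be identically $c$ on both $[0,s_0]$ and $[s_0,1]$, there would exist $a<s_0<b$ with $g(a)<c$ and $g(b)<c$, whence level convexity forces $g(s_0)\le\max\{g(a),g(b)\}<c$, contradicting $g(s_0)=c$. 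Therefore $g\equiv c$ on at least one of $[0,s_0]$ or $[s_0,1]$, i.e. $f^{\rm lslc}$ is constant and equal to $c$ on a nondegenerate sub-segment of $[\eta_1,\eta_2]$ having $\xi_0$ as one endpoint, which is exactly the claimed conclusion. The main obstacle is precisely passing from the nontrivial convex decomposition of $\xi_0$ (which by itself only says $\xi_0$ is not an extreme point of the level set $L_c(f^{\rm lslc})$, in the spirit of Proposition~\ref{Extxi0slc}) to \emph{genuine constancy} along a segment; the reduction to a one-variable level convex function together with the sign dichotomy above is what makes this step rigorous.
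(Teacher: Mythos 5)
Your proof is correct and follows essentially the same route as the paper's: both invoke the Carath\'eodory-type representation of Theorem \ref{Caratheodory for f lc} to write $\xi_0$ as a nontrivial convex combination realizing $f^{\rm lslc}(\xi_0)$, use the hypothesis $f^{\rm lslc}(\xi_0)<f(\xi_0)$ to place $\xi_0$ in the relative interior of a nondegenerate convex hull, bound $f^{\rm lslc}$ by $f^{\rm lslc}(\xi_0)$ on a segment through $\xi_0$ via level convexity, and conclude constancy on one of the two half-segments by the same level-convexity dichotomy. The only difference is presentational: you spell out the one-dimensional parametrisation and the final dichotomy that the paper compresses into ``again by the level convexity of $f^{\rm lslc}$.''
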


\begin{remark}
The same assertion can be proved if we assume $f$ to be lower semicontinuous, bounded from below, and such that
$\displaystyle \lim_{|\xi|\to +\infty}\frac{f(\xi)}{|\xi|}=+\infty$. The proof in this case is the same that we present below, but one shall
invoke Theorem \ref{thm1.8} (ii), instead of Theorem \ref{Caratheodory for f lc}.
\end{remark}

\begin{proof} By  Theorem \ref{Caratheodory for f lc}, $f^{\rm lslc}(\xi_0)=\max\{f(\xi_1), \dots, f(\xi_{n+1})\}$ for some $\xi_i\in\mathbb{R}^n,\ i=1,\dots, n+1$, such that $\xi_0=\lambda_1\xi_1+\dots + \lambda_{n+1} \xi_{n+1}$ with $\lambda_i\ge 0,\ i=1,\dots, n+1$ and $\sum_{i=1}^{n+1} \lambda_ i = 1$. Moreover, we can assume $\lambda_i> 0$ for every $i=1 ,\dots n+1$ (notice that some $\xi_i$ can be equal) and since $f^{\rm lslc}(\xi_0)<f(\xi_0),$ we conclude that $\xi_0$ belongs to the relative interior of the convex hull ${\rm co}\{\xi_1,\dots,\xi_{n+1}\}$. Therefore we can consider a segment line $[\eta,\zeta]$ contained in this relative interior such that $\xi_0\in (\eta,\zeta)$. By the level convexity of $f^{\rm lslc}$, $$f^{\rm lslc}(\xi)\le \max\{f^{\rm lslc}(\xi_1),\dots,f^{\rm lslc}(\xi_{n+1})\}\le \max\{f(\xi_1),\dots,f(\xi_{n+1})\}= f^{\rm lslc}(\xi_0),\ \forall\ \xi\in [\eta,\zeta].$$
Finally, again by the level convexity of $f^{\rm lslc}$, one has $f^{\rm lslc}\equiv f^{\rm lslc}(\xi_0)$ either in $[\eta,\xi_0]$ or in $[\xi_0,\zeta]$, as wished.\end{proof}

In the spirit of \cite{DM95}, we will deal with a weaker notion of strict level convexity, the strict level convexity in at least one direction, which was introduced in Section \ref{Strict level convexity}. We have the following result.

\begin{theorem}\label{thm strict level conv}
Let $\Omega \subset \mathbb R^n$ be a bounded open set, let $\xi_0\in\mathbb{R}^n$, and let $f:\mathbb{R}^n\longrightarrow\mathbb{R}$ be a Borel measurable and level convex function which is strictly level convex at $\xi_0$ in at least one direction. Then, $u_{\xi_0}$ is the only solution to the problem
\begin{equation}\label{minpb}
\qquad\inf\left\{ \operatorname*{ess\,sup}_{x\in\Omega}f\left(  \nabla u\left(
x\right)  \right):\ u\in u_{\xi_0}+W_{0}^{1,\infty}(\Omega)\right\}.
\end{equation}
\end{theorem}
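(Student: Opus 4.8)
The plan is to first pin down the value of the problem and then prove uniqueness by a rigidity argument in a single distinguished direction. Since every competitor has the form $\nabla u=\xi_0+\nabla\varphi$ with $\varphi\in W^{1,\infty}_0(\Omega)$, one has $\frac{1}{|\Omega|}\int_\Omega\nabla u\,dx=\xi_0$, so the supremal Jensen inequality (Theorem \ref{Jensensupremalscalar}, which applies since $f$ is Borel measurable and level convex) gives $f(\xi_0)\le\operatorname*{ess\,sup}_{\Omega}f(\nabla u)$ for every admissible $u$. As $u_{\xi_0}$ attains the value $f(\xi_0)$, the infimum in \eqref{minpb} equals $f(\xi_0)$, $u_{\xi_0}$ is a solution, and any solution $u$ satisfies $f(\nabla u(x))\le f(\xi_0)$ for a.e. $x\in\Omega$. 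Let $\alpha\in\mathbb R^n\setminus\{0\}$ be the direction provided by Definition \ref{slc1Dmid}; the goal is then to show that any solution $u$ obeys $\langle\nabla u(x)-\xi_0,\alpha\rangle=0$ a.e.

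For the core step I would argue by contradiction, assuming $\langle\nabla u-\xi_0,\alpha\rangle$ is not a.e. zero. Because $\int_\Omega\langle\nabla u-\xi_0,\alpha\rangle\,dx=0$, the sets $\Omega_+:=\{x:\langle\nabla u(x)-\xi_0,\alpha\rangle\ge 0\}$ and $\Omega_-:=\{x:\langle\nabla u(x)-\xi_0,\alpha\rangle<0\}$ both have positive measure. Setting $\gamma:=\frac{1}{|\Omega_+|}\int_{\Omega_+}\nabla u\,dx$, $\eta:=\frac{1}{|\Omega_-|}\int_{\Omega_-}\nabla u\,dx$ and $t:=|\Omega_+|/|\Omega|\in(0,1)$, additivity of the integral gives $\xi_0=t\gamma+(1-t)\eta$. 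Applying the Lebesgue form of Theorem \ref{Jensensupremalscalar} separately on $\Omega_+$ and $\Omega_-$, together with $f(\nabla u)\le f(\xi_0)$ a.e., yields $f(\gamma)\le f(\xi_0)$ and $f(\eta)\le f(\xi_0)$, while level convexity forces $f(\xi_0)\le\max\{f(\gamma),f(\eta)\}$; hence $\max\{f(\gamma),f(\eta)\}=f(\xi_0)$. The strict level convexity of $f$ at $\xi_0$ in the direction $\alpha$ then gives $\langle\gamma-\eta,\alpha\rangle=0$. This contradicts the construction: $\langle\gamma-\xi_0,\alpha\rangle>0$ (the average over $\Omega_+$ of a nonnegative integrand that is strictly positive on a set of positive measure) and $\langle\eta-\xi_0,\alpha\rangle<0$, so $\langle\gamma-\eta,\alpha\rangle>0$. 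Thus $\langle\nabla u(x),\alpha\rangle=\langle\xi_0,\alpha\rangle$ for a.e. $x$.

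To finish, I would invoke the same rigidity already used in Example \ref{notalwaysexistence}, namely \cite[Lemma 11.17]{D}: since $u\in u_{\xi_0}+W^{1,\infty}_0(\Omega)$ and $\nabla u(x)-\xi_0$ is orthogonal to the fixed nonzero vector $\alpha$ a.e., $u$ is constant along the direction $\alpha$ and, vanishing on $\partial\Omega$, must coincide with $u_{\xi_0}$. I expect the \emph{main obstacle} to be exactly the passage from the two-point, pointwise hypothesis of Definition \ref{slc1Dmid} to an a.e. statement about the whole gradient field: the key device is to collapse all of the a.e.\ information carried by $\nabla u$ into just two averaged vectors $\gamma,\eta$ whose convex combination is \emph{exactly} $\xi_0$, and to certify $f(\gamma),f(\eta)\le f(\xi_0)$ through supremal Jensen applied on each piece $\Omega_\pm$. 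It is worth noting that this argument uses only that $f$ is finite, Borel measurable and level convex, so neither coercivity nor regularity of $\partial\Omega$ is required here.
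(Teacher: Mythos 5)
Your proof is correct, but the central step takes a genuinely different route from the paper's. The paper fixes a representative of $u$, sets $S:=\{\nabla u(x):\ x\in\Omega\setminus A\}$, and repeats the argument of Proposition \ref{diffincl} (i.e.\ of \cite[Theorem 10.24]{D}) to obtain $\xi_0\in{\rm int}\,{\rm co}\,S$; it then argues \emph{pointwise}: for a.e.\ $x$ with $\nabla u(x)\neq\xi_0$ it writes $\xi_0=t\,\nabla u(x)+(1-t)\eta$ with $\eta\in{\rm co}\,S$, uses level convexity and the convexity of the level set to force $\max\{f(\nabla u(x)),f(\eta)\}=f(\xi_0)$, and applies Definition \ref{slc1Dmid} at each such $x$ (together with the collinearity of $\nabla u(x)-\eta$ and $\nabla u(x)-\xi_0$) to get $\langle\nabla u(x)-\xi_0,\alpha\rangle=0$ a.e. You bypass this differential-inclusion machinery entirely: the sign decomposition $\Omega_\pm$, the two averages $\gamma,\eta$ with $\xi_0=t\gamma+(1-t)\eta$, and supremal Jensen on each piece reduce everything to a \emph{single} application of Definition \ref{slc1Dmid}, contradicted by the strict sign of $\langle\gamma-\eta,\alpha\rangle$. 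Your use of Jensen on the merely measurable sets $\Omega_\pm$ is legitimate, since Theorem \ref{Jensensupremalscalar} admits arbitrary probability measures concentrated in $\Omega$ (take normalized Lebesgue measure restricted to $\Omega_\pm$), and your sign analysis (both pieces of positive measure, $\langle\gamma-\xi_0,\alpha\rangle>0$, $\langle\eta-\xi_0,\alpha\rangle<0$) is sound. Both arguments conclude with the same rigidity statement ($v\in W_0^{1,\infty}(\Omega)$ with $\nabla v\perp\alpha$ a.e.\ implies $v\equiv 0$), the paper quoting \cite[Theorem 5.1]{DM95} where you quote \cite[Lemma 11.17]{D}. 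What your route buys: it is more elementary and self-contained, avoiding the necessary condition for differential inclusions and the essential range of $\nabla u$, and it makes transparent that only finiteness, Borel measurability and level convexity of $f$ are needed. What the paper's route buys: the intermediate geometric fact that any non-affine solution forces $\xi_0\in{\rm int}\,{\rm co}\,S\subset{\rm int}\,L_{f(\xi_0)}(f)$, which ties the theorem to Proposition \ref{diffincl} and to the alternative proof sketched in Remark \ref{strict lev conv 2}.
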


\begin{proof}
Of course, by the supremal Jensen's inequality, $u_{\xi_0}$ is a solution of the minimizing problem and thus another solution $u$ satisfies $\displaystyle\operatorname*{ess\,sup}_{x\in\Omega}f\left(  \nabla u\left(
x\right)\right)=f(\xi_0)$. Let us fix a representative of $u$ still denoted by $u$.

One has, in particular, $f(Du(x))\le f(\xi_0), \ \forall\ x\in\Omega\setminus A,$ where $A$ is a null measure subset of $\Omega$. Thus  $$S:=\{Du(x):\ x\in\Omega\setminus A\}\subset\{\xi:\ f(\xi)\le f(\xi_0)\}.$$
On the other hand, as in the proof of Proposition \ref{diffincl}
 we get $\xi_0\in \mathrm{int}\,\overline{\mathrm{co}S}=\mathrm{int}\,\mathrm{co}S.$

Now let $x\in\Omega\setminus A$. Then either $Du(x)=\xi_0$ or $Du(x)\neq \xi_0$. If $x$ is in this last case we do the following.
Since $\xi_0\in \mathrm{int}\,\mathrm{co}S,$ we can write $$\xi_0=t\,Du(x)+(1-t)\eta$$ for some $\eta\in \mathrm{co}S$ with $\eta\neq\xi_0$ and $t\in(0,1)$. By the level convexity of $f$
$$f(\xi_0)\le \max\{f(Du(x)), f(\eta)\}\le f(\xi_0).$$ Thus, $\max\{f(Du(x)), f(\eta)\}= f(\xi_0)$ and by the strict level convexity of $f$ at $\xi_0$ in at least one direction one gets $<Du(x)-\eta,\gamma>=0$ for some $\gamma\in\mathbb{R}^n\setminus\{0\}$. Since $Du(x)-\eta$ and $Du(x)-\xi_0$ are colinear, then $<Du(x)-\xi_0,\gamma>=0$.

So we have obtained $<Du(x)-\xi_0,\gamma>=0,\ \forall\ x\in\Omega\setminus A$ and repeating the argument in \cite[Theorem 5.1]{DM95} one gets $u=u_{\xi_0}$ which proves the desired uniqueness of solution.
\end{proof}

\begin{remark}\label{strict lev conv 2}
Another proof for Theorem \ref{thm strict level conv} can be obtained via geometric arguments on the level sets. Namely Proposition \ref{strictlcboundary} ensures that $\xi_0 \in \partial L_{f(\xi_0)}(f)$, and thus by Proposition \ref{diffincl}, \eqref{minpb} admits just the affine solution.

It is worth to observe also that Theorem \ref{thm strict level conv} provides a result analogous to \cite[Proposition 11.14]{D} (where the notion of strict quasiconvexity has been introduced and compared with strict convexity in at least one direction in order to guarantee uniqueness of solutions to integral vectorial minimum problems). In fact  one can read \eqref{swMqcx} as a strict weak Morrey quasiconvexity, and deduce that this condition is weaker than strict convexity at a point in at least one direction.
\end{remark}


Going back to the non-level convex problems we can state the following result which shows that, the strict level convexity at $\xi_0$ in at least one direction for $f^{\rm lslc}$, with $f^{\rm lslc}(\xi_0)< f(\xi_0)$, is the characterizing feature for non existence of solutions to problem $(P)$.

\begin{corollary}\label{corollary slc in at least}
Let $\Omega\subset\mathbb{R}^n$ be a bounded open set with Lipschitz boundary, let $\xi_0\in\mathbb{R}^n$, and let $f:\mathbb{R}^n\longrightarrow\mathbb{R}$ be a continuous function satisfying \eqref{alphacoerciveness}. Assume that $f^{\rm lslc}(\xi_0)< f(\xi_0)$.
Consider problem $(P)$ with $u_0=u_{\xi_0}$. Then problem $(P)$ admits a solution if and only if $f^{\rm lslc}$ is not strictly level convex at $\xi_0$ in any direction.
\end{corollary}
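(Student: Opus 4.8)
The plan is to read the result directly off the necessary-and-sufficient characterization \eqref{bcincluded} established in Theorem \ref{NSC}, rather than constructing a minimizer by hand. Recall that, for the affine datum $u_0=u_{\xi_0}$, Theorem \ref{NSC} asserts that $(P)$ admits a solution if and only if
$$\xi_0 \in L_{f^{\rm lslc}(\xi_0)}(f) \cup {\rm int}\,L_{f^{\rm lslc}(\xi_0)}(f^{\rm lslc}).$$
My first step is to simplify this condition using the standing hypothesis $f^{\rm lslc}(\xi_0)<f(\xi_0)$. Since $f(\xi_0)>f^{\rm lslc}(\xi_0)$, the point $\xi_0$ violates the defining inequality $f(\xi_0)\le f^{\rm lslc}(\xi_0)$ of $L_{f^{\rm lslc}(\xi_0)}(f)$, so $\xi_0\notin L_{f^{\rm lslc}(\xi_0)}(f)$ and the union collapses to its second member. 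Hence existence of a solution to $(P)$ is equivalent to the single membership $\xi_0\in {\rm int}\,L_{f^{\rm lslc}(\xi_0)}(f^{\rm lslc})$.

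Next I would set up the topological dichotomy that converts this interior condition into a boundary condition. By the continuity of $f$ together with the coercivity \eqref{alphacoerciveness} (which forces $\lim_{|\xi|\to+\infty}f(\xi)=+\infty$), Theorem \ref{Caratheodory for f lc} guarantees that $f^{\rm lslc}=f^{\rm lc}$ is finite and continuous; in particular the level set $L_{f^{\rm lslc}(\xi_0)}(f^{\rm lslc})$ is closed. Since $\xi_0$ manifestly lies in this closed set, it is either an interior point or a boundary point, and these alternatives are mutually exclusive. Therefore $\xi_0\in {\rm int}\,L_{f^{\rm lslc}(\xi_0)}(f^{\rm lslc})$ holds if and only if $\xi_0\notin \partial L_{f^{\rm lslc}(\xi_0)}(f^{\rm lslc})$.

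The final step is to invoke the geometric characterization of strict level convexity in at least one direction. As $f^{\rm lslc}$ is by construction lower semicontinuous and level convex, and finite and continuous by the previous step, Proposition \ref{strictlcboundary} applies and shows that $f^{\rm lslc}$ is strictly level convex at $\xi_0$ in at least one direction precisely when $\xi_0\in\partial L_{f^{\rm lslc}(\xi_0)}(f^{\rm lslc})$. Chaining the three equivalences then yields: $(P)$ has a solution $\iff \xi_0\in {\rm int}\,L_{f^{\rm lslc}(\xi_0)}(f^{\rm lslc}) \iff \xi_0\notin\partial L_{f^{\rm lslc}(\xi_0)}(f^{\rm lslc}) \iff f^{\rm lslc}$ is not strictly level convex at $\xi_0$ in any direction, which is exactly the claim.

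As a consistency check and an alternative route for the ``only if'' direction, I would note that non-existence can be argued directly: if $f^{\rm lslc}$ is strictly level convex at $\xi_0$ in at least one direction, then applying Theorem \ref{thm strict level conv} to the level convex function $f^{\rm lslc}$ shows that $u_{\xi_0}$ is the unique solution of $(P^{\rm lc})$, whence Proposition \ref{Prop no solution} (using $f^{\rm lslc}(\xi_0)<f(\xi_0)$) forces $(P)$ to have no solution. I do not expect a serious obstacle here; the only point demanding care is to verify that the hypotheses of Proposition \ref{strictlcboundary} and of the closed-level-set dichotomy genuinely hold, namely that $f^{\rm lslc}$ is finite, continuous, lower semicontinuous and level convex, all of which are furnished by Theorem \ref{Caratheodory for f lc} under the continuity and coercivity assumptions on $f$.
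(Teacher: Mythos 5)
Your proof is correct, and its sufficiency half coincides with the paper's: from ``not strictly level convex at $\xi_0$ in any direction'', Proposition \ref{strictlcboundary} gives $\xi_0\notin\partial L_{f^{\rm lslc}(\xi_0)}(f^{\rm lslc})$, hence $\xi_0\in{\rm int}\,L_{f^{\rm lslc}(\xi_0)}(f^{\rm lslc})$, and \eqref{bcincluded} of Theorem \ref{NSC} yields a solution. Where you genuinely diverge is in the necessity half. The paper argues via uniqueness: if $f^{\rm lslc}$ were strictly level convex at $\xi_0$ in some direction, Theorem \ref{thm strict level conv} makes $u_{\xi_0}$ the unique solution of $(P^{\rm lc})$, and Proposition \ref{Prop no solution} (using $f^{\rm lslc}(\xi_0)<f(\xi_0)$) then excludes solutions of $(P)$ --- the Marcellini-type strategy the authors emphasize throughout Section \ref{NecSufCond}. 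You instead extract both implications from the single characterization \eqref{bcincluded}: the hypothesis $f^{\rm lslc}(\xi_0)<f(\xi_0)$ removes the member $L_{f^{\rm lslc}(\xi_0)}(f)$ from the union, so existence is equivalent to $\xi_0\in{\rm int}\,L_{f^{\rm lslc}(\xi_0)}(f^{\rm lslc})$, which by the interior/boundary dichotomy and Proposition \ref{strictlcboundary} is equivalent to the failure of strict level convexity at $\xi_0$ in every direction. Your route is more uniform, resting only on the differential-inclusion input behind Theorem \ref{NSC} (Theorems \ref{Necdiffincl} and \ref{Suffdiffincl}), and you are more careful than the paper in checking that $f^{\rm lslc}$ is finite, lower semicontinuous and level convex (via Theorem \ref{Caratheodory for f lc}) so that Proposition \ref{strictlcboundary} applies. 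What the paper's detour through Theorem \ref{thm strict level conv} and Proposition \ref{Prop no solution} buys is the stronger conclusion recorded in the remark following the corollary: under strict level convexity at $\xi_0$ in at least one direction, $(P^{\rm lc})$ has \emph{exactly one} solution, information your interior-point argument does not provide. (Your ``consistency check'' is, in fact, precisely the paper's necessity proof.)
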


\begin{remark}
Under the assumptions of the corollary, we can state more precisely that, if $f^{\rm lslc}$ is strictly level convex at $\xi_0$ in at least one direction, then problem $(P^{\rm lc})$ has exactly one solution and problem $(P)$ has no solution. This follows from Theorem \ref{thm strict level conv} and Proposition \ref{Prop no solution}, as mentioned in the proof below. If f is also lower semicontinuous, Propositions \ref{diffincl} and \ref{strictlcboundary} ensure that strict level convexity at $\xi_0$ in at least one direction is equivalent to strict weak Morrey quasiconvexity at $\xi_0$.
\end{remark}

\begin{proof}[Proof]
The fact that if $(P)$ has a solution then $f^{\rm lslc}$ is not strictly level convex at $\xi_0$ in any direction is an immediate consequence of Theorem \ref{thm strict level conv} and Proposition \ref{Prop no solution}.

Now we prove the reverse implication. Assume $f^{\rm lslc}$ is not strictly level convex at $\xi_0$ in any direction. Then, by Propostion \ref{strictlcboundary}, $\xi_0 \not \in \partial L_{f^{\rm lslc}(\xi_0)}(f^{\rm lslc})$, being an interior point of this set. Theorem \ref{NSC}, \eqref{bcincluded} ensures then that $(P)$ has a solution.
\end{proof}



Our previous results lead to the following theorem, which intends to characterize the set $R_{f^{\rm lslc}(\xi_0)}(f^{\rm lslc})$ (cf. Notation \ref{CArecalls}) near the point $\xi_0$, for non level convex problems admitting a minimizer and with affine boundary data $u_0=u_{\xi_0}$ .  The result is the analogous version to \cite[Theorem 11.26]{D} for the supremal setting.


\begin{theorem}\label{analog to thm 11.26 Dacorogna}
Let $\Omega\subset\mathbb{R}^n$ be a bounded open set with Lipschitz boundary, let $\xi_0\in\mathbb{R}^n$, and let $f:\mathbb{R}^n\longrightarrow\mathbb{R}$ be a continuous function satisfying \eqref{alphacoerciveness}. Assume that $f^{\rm lslc}(\xi_0)< f(\xi_0)$. Let $K:=\{\xi\in\mathbb{R}^n: f^{\rm lslc}(\xi) < f(\xi)\}$ and assume that $K$ is connected, otherwise we replace $K$ by its connected component containing $\xi_0$. Consider problem $(P)$ with $u_0=u_{\xi_0}$.
\begin{enumerate}
\item[(i)] [Necessary condition.] If $(P)$ has  a minimizer, then there exists $\nu \in \mathbb{R}^n\setminus\{0\}$ and $\e >0$ such that $f^{\rm lslc}$ is constant in the set $\{\xi \in B_\varepsilon(\xi_0):\ <\xi-\xi_0,\nu> \geq 0\}\subset R_{f^{\rm lslc}(\xi_0)}(f^{\rm lslc})$.
\item[(ii)] [Sufficient condition.] If there exists $E \subset \partial K$ such that $\xi_0 \in {\rm int}\,{\rm co}(E)$ and $f^{\rm lslc}\left|_{\{\xi_0\}\cup E}\right.$ is constant then $(P)$ has a solution.
\end{enumerate}
\end{theorem}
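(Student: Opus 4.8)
The plan is to treat both parts by transporting the problem to the geometry of the sublevel sets of $f^{\rm lslc}$, using that the continuity and coercivity hypotheses make $f^{\rm lslc}=f^{\rm lc}$ continuous and level convex (Theorem \ref{Caratheodory for f lc}) and that $L_c(f^{\rm lslc})={\rm co}\,L_c(f)$ for every $c$ (Corollary \ref{Corollary to Caratheodory}). Throughout I write $c:=f^{\rm lslc}(\xi_0)$.

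For the sufficient condition (ii), I would argue as follows. Every $\xi\in E\subset\partial K$ satisfies $f^{\rm lslc}(\xi)=f(\xi)$ by continuity of $f$ and $f^{\rm lslc}$, and by hypothesis $f^{\rm lslc}(\xi)=c$; hence $f(\xi)=c$, so $E\subseteq L_c(f)$. Consequently ${\rm co}(E)\subseteq{\rm co}\,L_c(f)=L_c(f^{\rm lslc})$, and therefore $\xi_0\in{\rm int}\,{\rm co}(E)\subseteq{\rm int}\,L_{f^{\rm lslc}(\xi_0)}(f^{\rm lslc})$. This is exactly condition \eqref{bcincluded} of Theorem \ref{NSC}, which yields a solution of $(P)$.

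For the necessary condition (i), the starting point is Corollary \ref{corollary slc in at least}: since $f^{\rm lslc}(\xi_0)<f(\xi_0)$ and $(P)$ has a minimizer, $f^{\rm lslc}$ is not strictly level convex at $\xi_0$ in any direction. By Proposition \ref{strictlcboundary} this means $\xi_0\notin\partial L_c(f^{\rm lslc})$, so that $\xi_0\in{\rm int}\,L_c(f^{\rm lslc})$; equivalently there is $\delta>0$ with $f^{\rm lslc}\le c$ on $B_\delta(\xi_0)$, i.e. $\xi_0$ is a local maximum of $f^{\rm lslc}$. The key structural observation is that the strict sublevel set $C:=\{f^{\rm lslc}<c\}=\bigcup_{c'<c}L_{c'}(f^{\rm lslc})$ is convex, being an increasing union of convex sets. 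I would then split into two cases. If $\xi_0\notin\overline{C}$, then $f^{\rm lslc}\ge c$ on a ball around $\xi_0$, which together with the local maximum gives $f^{\rm lslc}\equiv c$ on some $B_\varepsilon(\xi_0)$, and any $\nu\neq0$ works. Otherwise $\xi_0\in\partial C$ and, $C$ being convex, there is a supporting hyperplane at $\xi_0$, i.e. $\nu\in\mathbb{R}^n\setminus\{0\}$ with $\langle\xi-\xi_0,\nu\rangle\le0$ for all $\xi\in C$. Then every $\xi\in B_\delta(\xi_0)$ with $\langle\xi-\xi_0,\nu\rangle>0$ lies outside $C$, whence $f^{\rm lslc}(\xi)\ge c$, while $f^{\rm lslc}(\xi)\le c$ by the local maximum; by continuity $f^{\rm lslc}\equiv c$ on the closed half-ball $\{\xi\in B_\delta(\xi_0):\langle\xi-\xi_0,\nu\rangle\ge0\}$, which is therefore contained in $R_{c}(f^{\rm lslc})$, as required.

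The main obstacle is the passage, in part (i), from the pointwise local-maximum information to constancy on an entire half-ball with a single, uniform direction $\nu$. A direction-by-direction analysis (one-dimensional level convexity along each line through $\xi_0$) only forces constancy on one side of each line and does not assemble into a half-space. The device that overcomes this is the convexity of the strict sublevel set $C$, which allows one supporting hyperplane to serve simultaneously for all directions; the remaining points are the standard existence of supporting hyperplanes at boundary points of convex sets (including the degenerate case ${\rm int}\,C=\emptyset$, where $C$ lies in a hyperplane) and the routine closure argument via continuity of $f^{\rm lslc}$.
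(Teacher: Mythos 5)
Your proof is correct, and it splits naturally into one part that matches the paper and one that does not. Writing $c:=f^{\rm lslc}(\xi_0)$: for part (i) you follow essentially the paper's own route — both arguments pass from Corollary \ref{corollary slc in at least} and Proposition \ref{strictlcboundary} to $\xi_0\in{\rm int}\,L_{c}(f^{\rm lslc})$, both observe that the strict sublevel set is convex as an increasing union of the convex sets $L_{c'}(f^{\rm lslc})$ (the paper intersects with the ball $B_\varepsilon(\xi_0)$ before taking the union, you take the global set $\{f^{\rm lslc}<c\}$ and restrict to the ball afterwards), and both finish by a separation argument combined with $f^{\rm lslc}\le c$ on the ball; the paper separates the point $\xi_0$ from the open convex set, you use a supporting hyperplane at $\xi_0\in\partial C$, which is the same step up to replacing strict by non-strict inequalities. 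For part (ii), however, you take a genuinely different route. The paper applies Theorem \ref{Suffdiffincl} directly with target set $E$: since $\xi_0\in{\rm int}\,{\rm co}(E)$, there exists $\overline u\in u_{\xi_0}+W_0^{1,\infty}(\Omega)$ with $\nabla\overline u\in E$ a.e., and then it computes $\operatorname{ess\,sup}_{x\in\Omega}f(\nabla\overline u(x))=\operatorname{ess\,sup}_{x\in\Omega}f^{\rm lslc}(\nabla\overline u(x))=f^{\rm lslc}(\xi_0)$ using $f=f^{\rm lslc}$ on $\partial K$ together with the constancy hypothesis, concluding by the first part of Theorem \ref{NSC}. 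You instead use those same two facts only to get $E\subseteq L_{c}(f)$, push this through Corollary \ref{Corollary to Caratheodory} to obtain ${\rm co}(E)\subseteq{\rm co}\,L_c(f)=L_c(f^{\rm lslc})$, hence $\xi_0\in{\rm int}\,{\rm co}(E)\subseteq{\rm int}\,L_c(f^{\rm lslc})$, and invoke the characterization \eqref{bcincluded}. Your version is shorter and makes explicit that the hypothesis of (ii) is literally a special case of criterion \eqref{bcincluded}; the paper's version is more constructive, producing a minimizer whose gradient lies a.e. in $E\subset\partial K$, i.e. in the region where $f$ and its envelope agree, which is extra structural information about the solution. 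Note that the two arguments rest on the same machinery in the end: the sufficiency of \eqref{bcincluded} inside Theorem \ref{NSC} is itself proved via Theorem \ref{Suffdiffincl}, applied there to the level set $L_{c}(f)$ rather than to $E$.
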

\begin{proof}[Proof.]
To prove the necessary part we start observing that, by Corollary \ref{corollary slc in at least}, if $(P)$ admits a minimizer, then $f^{\rm lslc}$ is not strictly level convex at $\xi_0$ in any direction. Then, by Proposition \ref{strictlcboundary}, $\xi_0\in {\rm int} L_{f^{\rm lslc}(\xi_0)}(f^{\rm lslc})$. Let $\varepsilon>0$ be such that $B_\varepsilon(\xi_0)\subset L_{f^{\rm lslc}(\xi_0)}(f^{\rm lslc})$ and consider, for each $n\in\mathbb{N}$, the convex sets $$C_n:=\left\{\xi\in B_\varepsilon(\xi_0):\ f^{\rm lslc}(\xi)\le f^{\rm lslc}(\xi_0)-\frac{1}{n}\right\}.$$ Let $$C:=\bigcup_{n\in\mathbb{N}}C_n=\left\{\xi\in B_\varepsilon(\xi_0):\ f^{\rm lslc}(\xi)< f^{\rm lslc}(\xi_0)\right\}.$$ Observe that $C$, being an increasing sequence of convex sets, it is also a convex set. Moreover, by Theorem \ref{Caratheodory for f lc}, $f^{\rm lslc}$ is continuous and thus $C$ is open.  If $C$ is empty, it means that $f^{\rm lslc}$ is constant in $B_\varepsilon(\xi_0)$. Otherwise, applying a separation result for the convex open set $C$, one gets the existence of $\nu\in\mathbb{R}^n\setminus\{0\}$ such that $<\xi_0-\xi,\nu>< 0$ for all $\xi\in C$. Therefore, for all $\xi\in B_\varepsilon(\xi_0)$ such that $<\xi_0-\xi,\nu>\ge 0$ one has $f^{\rm lslc}(\xi)=f^{\rm lslc}(\xi_0)$, as wished. 

The sufficient part is proved observing that, by Theorem \ref{Suffdiffincl}, there exists $\overline{u} \in u_{\xi_0}+ W^{1,\infty}_0(\Omega)$ such that $\nabla \overline{u} \in E \subset \partial K$ for a.e. $x \in \Omega$. Since $f^{\rm lslc}= f$ on $\partial K$, we have $f(\nabla {\overline u}(x))= f^{\rm lslc}(\nabla {\overline u}(x))$ for a.e. $x \in \Omega$ and since $f^{\rm lslc}$  is constant in $\{\xi_0\}\cup E$, we have
$$
{\rm ess}\sup_{x \in \Omega} f(\nabla {\overline u}(x)) = {\rm ess}\sup_{x \in \Omega} f^{\rm lslc}(\nabla {\overline u}(x))=f^{\rm lslc}(\xi_0),
$$
which, by Theorem \ref{NSC}, ensures that $(P)$ has a solution.
\end{proof}

\begin{remark}\label{fslcnoFslc}
We observe that (as very well emphasized by Crandall \cite{C} and Aronsson-Crandall-Juutinen in \cite{ACJ}) that given $u_0 \in W^{1,\infty}(\Omega)$, and $f:\mathbb R^n \to \mathbb R$ continuous and strict level convex, then the functional
$$
F: u \in C(\overline{\Omega})\to \left\{
\begin{array}{ll}\displaystyle
\supess_{x \in \Omega} f(\nabla u(x)) & \hbox{ if } u \in u_0+ W^{1,\infty}_0(\Omega),\\
\\
+\infty & \hbox{ otherwise,}
\end{array}
\right.
$$
is in general not strictly level convex as a functional, i.e. it does not satisfy $F(u) < \max\{F(u_1), F(u_2)\}$, for every $u= \lambda u_1 + (1-\lambda )u_2$, $u_1, u_2 \in u_0+ W^{1,\infty}_0(\Omega)$, $\lambda\in (0,1)$, even when $n=1$. This is in fact the case of the minimum problem arising when looking for the minimal Lipschitz extension, where in fact the density $f$ defined as $f(\cdot):= |\cdot|$ is strictly level convex, but the minimizer is not unique. On the other hand, our previous results (cf. in particular Remark \ref{weakMqcx},  Proposition \ref{Prop no solution} and Theorem \ref{thm strict level conv}) show (also in the vectorial case) that if $u_0$ is affine, namely $u_0:= u_{\xi_0}$ then we have a unique solution to the problem $\displaystyle\inf\left\{ \supess_{x \in \Omega} f(\nabla u(x)): u \in u_{\xi_0} + W^{1,\infty}_0(\Omega)\right\}$.

We also observe that the continuity and strict level convexity of a function $f: \mathbb R^n \to \mathbb R$, satisfying \eqref{alphacoerciveness}, is sufficient to ensure the uniqueness of solution to the minimization  problem
$$
\inf \left\{\supess_{x \in \Omega} f(u(x)):\ u \in L^\infty(\Omega;\mathbb{R}^n)\right\}.
$$
Moreover, this unique solution is a constant function. Indeed, the coercivity condition and the continuity of $f$ implies the existence of a global minimum to $f$ and the strict level convexity ensures that this minimum is attained in only one point, say $\xi_0\in\mathbb{R}^n$. Therefore $u(x)\equiv \xi_0$ is a solution to the minimization problem and it is obviously the only solution, since any other solution $v$ satisfies $\supess_{x \in \Omega} f( v(x))=f(\xi_0)$ and this implies that $v(x)=\xi_0$  for a.e. $x\in\Omega.$

\end{remark}

\section{Appendix}\label{appendix}

In this section we make some considerations concerning the vectorial case, that is when $f:\mathbb{R}^{m\times n}\to\mathbb{R}$. We also correct some statements by Barron-Jensen-Wang in \cite{BJW99}. In the vectorial case the necessary and sufficient condition for sequential weak * lower semicontinuity of the supremal functional is the so called (strong) Morrey quasiconvexity as proved by Barron-Jensen-Wang \cite[Theorems 2.6 and 2.7]{BJW99}. We start recalling this notion together with other notions also introduced in \cite[Definitions 2.1, 2.2 and 3.7]{BJW99}. We denote by $Q$ the unitary cube of $\mathbb{R}^n$.

\begin{definition}\label{generalized notions}
(i) A Borel measurable function $f:\mathbb R^{m\times n}\to \mathbb R$ is said to be strong Morrey quasiconvex if for  any $\varepsilon>0$, for any $\xi\in \mathbb R^{m\times n}$, and  any $K>0$, there exists a $\delta= \delta(\varepsilon, K, \xi)>0$ such that if $\varphi \in W^{1,\infty}(Q;\mathbb R^m)$ satisfies
$$
\displaystyle{\|\nabla \varphi\|_{L^\infty(Q)} \leq K, \qquad \max_{x \in \partial Q}|\varphi(x)|\leq \delta},
$$
then,
\begin{equation}\nonumber
f(\xi) \leq \operatorname*{ess\,sup}_{x \in Q} f(\xi+ \nabla \varphi(x))+ \varepsilon.
\end{equation}

(ii) A function $f:\mathbb R^{m\times n}\to \mathbb R$ is said to be weak Morrey quasiconvex if
\begin{equation}\label{wMqcx}
f(\xi) \leq \operatorname*{ess\,sup}_{x\in Q}f(\xi+ \nabla \varphi(x)),\end{equation}
for every $\xi\in\mathbb{R}^{m\times n}$ and every $\varphi \in W^{1,\infty}_0(Q;\mathbb R^m)$.

(iii) A function $f:\mathbb R^{m\times n}\to \mathbb R$ is level convex if $f(t\xi+(1-t)\eta)\leq \max \{f(\xi),f(\eta)\}$, for every $t \in [0,1]$ and for every $\xi,\eta \in \mathbb R^{m\times n}$.

(iv) A function $f:\mathbb R^{m\times n}\to \mathbb R$ is rank one quasiconvex (rank one level convex) if for any $\xi,\eta \in \mathbb R^{m\times n}$ with ${\rm rank}(\xi-\eta)\leq 1$, $f(t\xi+(1-t)\eta)\leq \max \{f(\xi),f(\eta)\}$, for every $t \in [0,1]$.
\end{definition}

\begin{remark} Clearly, as observed in \cite{BJW99}, strong Morrey quasiconvexity implies weak Morrey quasiconvexity. However, it is not true that  weak Morrey quasiconvexity implies rank one quasiconvexity, as it was wrongly stated in \cite[Proposition 3.8 and Corollary 3.9]{BJW99}. See Example \ref{example BJW not true}. We will show in Theorem \ref{weakMorreyimpliesrankone} below, that this statement is true if we assume the function to be upper semicontinuous.
\end{remark}

\begin{example}\label{example BJW not true}
Let $m\ge 1$ and $n>1$. Let $S:=\{\xi, \eta\}\subset\mathbb{R}^{m\times n}$ such that $\rm{rank}(\xi-\eta)=1$ and let $f:=1-\chi_S$, where $\chi_S$ is the characteristic function of $S$. Of course $f$ is not rank one quasiconvex. Let's see that $f$ is weak Morrey quasiconvex:
$$f(\zeta)\leq \operatorname*{ess\,sup}_{x\in Q}f(\zeta+ \nabla \varphi(x)),\ \forall\ \zeta\in\mathbb{R}^{m\times n},\ \varphi \in W_0^{1,\infty}(Q;\mathbb{R}^m).$$ To this end it is enough to consider the case where $\zeta \notin S$. Then, the inequality follows from the fact that, there is no $\varphi \in W^{1,\infty}_0(Q;\mathbb{R}^m)$ such that $\nabla \varphi(x)\in \{\xi-\zeta,\eta-\zeta\}$ a.e. in $Q$. Actually, if $m=1$, this is a consequence of Theorem \ref{Necdiffincl}. In the vectorial case $m>1$, this follows from \cite[Propositions 1 and 2]{BallJames}.

We also observe that $f$ is lower semicontinuous. So lower semicontinuity and weak Morrey quasiconvexity is not enough to ensure rank one quasiconvexity.
\end{example}

Next we show that if a function is upper semicontinuous and weak Morrey quasiconvex then it is rank one quasiconvex. We start recalling a lemma due to M{\"u}ller-{\v{S}}ver{\'a}k \cite[Lemma 2.1]{MS}, which is a generalization of a classical one and which will be useful for our proof.
\begin{lemma}\label{key-lemma}Let
$\Omega\subset\mathbb{R}^{n}$ be a bounded open set. Let $t\in\lbrack0,1]$ and
$\xi,\eta\in\mathbb{R}^{m\times n}$
with
${\rm rank}(\xi-\eta)=1$. Let $\varphi$ be an affine map such that
\[
D\varphi(x)=t\xi+(1-t)\eta,\ x\in\overline{\Omega}.
\]
 Then, for every $\varepsilon>0,$ there exists
$u \in Aff_{piec}(\overline{\Omega};\mathbb{R}^m)$ such that
\[
\left\{
\begin{array}
[c]{l}
\mathrm{dist}(Du(x),\{\xi,\eta\})\le \varepsilon,\ \text{a.e. }x\in\Omega,\vspace{0.2cm}\\
\displaystyle\sup_{x\in\Omega}\left\vert u(x)-\varphi(x)\right\vert \leq\varepsilon,\vspace{0.2cm}\\
u(x)=\varphi(x),\ x\in\partial\Omega.
\end{array}
\right.
\]
\end{lemma}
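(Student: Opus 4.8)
The plan is to exploit the rank-one structure to collapse the vectorial statement onto a purely scalar differential inclusion, and then quote Theorem \ref{Suffdiffincl}. First I would write $\xi-\eta=a\otimes\nu$ with $a\in\mathbb{R}^m\setminus\{0\}$ and, after rescaling, $|\nu|=1$, and set $b:=D\varphi=t\xi+(1-t)\eta$, so that $\xi=b+(1-t)\,a\otimes\nu$ and $\eta=b-t\,a\otimes\nu$. I would then seek the competitor in the form $u=\varphi+a\,w$ with $w:\Omega\to\mathbb{R}$ scalar, so that $Du=b+a\otimes\nabla w$. Since $|a\otimes c|=|a|\,|c|$ for every $c\in\mathbb{R}^n$, the three requirements become conditions on $w$ alone: the boundary condition $u=\varphi$ on $\partial\Omega$ becomes $w=0$ on $\partial\Omega$; the estimate $\sup_\Omega|u-\varphi|\le\varepsilon$ becomes $\|w\|_{L^\infty}\le\varepsilon/|a|$; and, using $Du-\xi=a\otimes(\nabla w-(1-t)\nu)$, $Du-\eta=a\otimes(\nabla w+t\nu)$, the uniform bound $\mathrm{dist}(Du(x),\{\xi,\eta\})\le\varepsilon$ becomes $\mathrm{dist}(\nabla w(x),\{(1-t)\nu,-t\nu\})\le\varepsilon/|a|$.

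Putting $\varepsilon':=\varepsilon/|a|$, I would next solve the scalar inclusion $\nabla w\in E$ with zero affine boundary datum, for the target $E:=\overline{B_{\varepsilon'}((1-t)\nu)}\cup\overline{B_{\varepsilon'}(-t\nu)}\subset\mathbb{R}^n$. The only thing to verify is the hypothesis of Theorem \ref{Suffdiffincl}, namely $0\in E\cup{\rm int}\,{\rm co}\,E$. This holds because $0$ lies in the open segment joining $-t\nu$ and $(1-t)\nu$, and the $\varepsilon'$-fattening makes this robust: for every $c$ with $|c|\le\varepsilon'$ one has $c=t\big((1-t)\nu+c\big)+(1-t)\big(-t\nu+c\big)\in{\rm co}\,E$, whence $B_{\varepsilon'}(0)\subset{\rm co}\,E$ and $0\in{\rm int}\,{\rm co}\,E$. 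Theorem \ref{Suffdiffincl} then furnishes $w\in W^{1,\infty}_0(\Omega)$ with $\nabla w\in E$ a.e., and by its last assertion (Remark \ref{SuffdiffinclRem}) with $\|w\|_{L^\infty(\Omega)}\le\varepsilon'$; since the underlying pyramidal construction is piecewise affine, $w\in Aff_{piec}(\overline\Omega)$. Setting $u:=\varphi+a\,w$ and reversing the three translations above delivers exactly the three conclusions, with $u\in Aff_{piec}(\overline\Omega;\mathbb{R}^m)$, because $\nabla w\in E$ gives $\mathrm{dist}(\nabla w,\{(1-t)\nu,-t\nu\})\le\varepsilon'$, hence $\mathrm{dist}(Du,\{\xi,\eta\})=|a|\,\mathrm{dist}(\nabla w,\{(1-t)\nu,-t\nu\})\le\varepsilon$.

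The step I expect to be the genuine obstacle, and the reason the statement is not routine, is precisely the necessity of the $\varepsilon$-fattening. One cannot take the target to be the two matrices $\{\xi,\eta\}$ (equivalently the two vectors $\{(1-t)\nu,-t\nu\}$): for $n>1$ the convex hull of two points is a segment with empty interior, so the boundary gradient $0$, which sits in its relative interior for $t\in(0,1)$, never lies in ${\rm int}\,{\rm co}$. This is not a defect of the method but reflects a genuine rigidity: a map with $Du\in\{\xi,\eta\}$ a.e.\ and affine boundary datum $\varphi$ would force $u-\ell_\eta=a\,g(\langle\cdot,\nu\rangle)$ for an affine map $\ell_\eta$ with gradient $\eta$ and a scalar profile $g$ with $g'\in\{0,1\}$, which on $\partial\Omega$ must agree with $\ell_b-\ell_\eta$, i.e.\ $g(s)=ts+\text{const}$, incompatible with $g'\in\{0,1\}$ unless $t\in\{0,1\}$. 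The slack permitted by ``$\mathrm{dist}(Du,\{\xi,\eta\})\le\varepsilon$'' is exactly what reinstates $0\in{\rm int}\,{\rm co}\,E$ and absorbs the boundary layer; the sup-norm bound and the piecewise-affine character of $w$ are then entirely deferred to the construction behind Theorem \ref{Suffdiffincl}.
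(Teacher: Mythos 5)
Your proof is correct in substance, but it is worth noting that the paper itself does not prove this lemma at all: it is quoted verbatim from M\"uller--\v{S}ver\'ak \cite[Lemma 2.1]{MS}, whose proof is an explicit sawtooth/lamination construction (build the one-dimensional oscillation in the direction $\nu$, then repair the boundary on a model domain and exhaust $\Omega$ by rescaled copies), valid directly in the vectorial setting. Your route is genuinely different and, within this paper, more economical: you use the rank-one structure $\xi-\eta=a\otimes\nu$ to reduce everything to a scalar unknown $w$ via the ansatz $u=\varphi+a\,w$, and then the three requirements collapse exactly (as you compute, using $|a\otimes c|=|a|\,|c|$) to the scalar inclusion $\nabla w\in E$, $w\in W^{1,\infty}_0(\Omega)$, with $E=\overline{B_{\varepsilon'}((1-t)\nu)}\cup\overline{B_{\varepsilon'}(-t\nu)}$, which Theorem \ref{Suffdiffincl} solves since $0\in{\rm int}\,{\rm co}\,E$ (your convex-combination computation, valid also at $t\in\{0,1\}$ where $0\in E$ outright). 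This buys a proof that is self-contained modulo results already stated in the paper, at the price of working only because the rank-one connection makes the problem essentially scalar; the M\"uller--\v{S}ver\'ak argument is the one that generalizes to in-approximations and genuinely vectorial constraints. Your closing rigidity discussion (why the fattening by $\varepsilon$ is unavoidable) is sound and matches the classical two-gradient rigidity.

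The one soft spot is the conclusion $u\in Aff_{piec}(\overline{\Omega};\mathbb{R}^m)$. Theorem \ref{Suffdiffincl}, as stated in the paper, only delivers $w\in W^{1,\infty}_0(\Omega)$ with $\nabla w\in E$ a.e.; piecewise affineness is not part of its statement, and for a general target set a solution of a differential inclusion need not be piecewise affine (e.g.\ if produced by Baire-category methods). Your appeal to ``the underlying pyramidal construction'' is the right idea but should be made honest: either record that the scalar constructions of Cellina/Friesecke/Dacorogna--Marcellini behind Theorem \ref{Suffdiffincl} are (countably) piecewise affine, or, cleaner, first replace $E$ by a finite subset $F\subset E$ with $0\in{\rm int}\,{\rm co}\,F$ --- for instance $F=\{(1-t)\nu\pm\varepsilon' e_j\}\cup\{-t\nu\pm\varepsilon' e_j\}$, $j=1,\dots,n$, since each $\pm\varepsilon' e_j$ is then the convex combination $t\bigl((1-t)\nu\pm\varepsilon' e_j\bigr)+(1-t)\bigl(-t\nu\pm\varepsilon' e_j\bigr)$ --- and solve $\nabla w\in F$ by the explicit pyramid-plus-Vitali covering argument, which is manifestly piecewise affine on countably many pieces and also gives the $L^\infty$ bound by taking the covering sets small. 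With that adjustment the proof is complete.
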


\begin{theorem}\label{weakMorreyimpliesrankone}
(i) Let $f:\mathbb{R}^{m\times n}\to\mathbb{R}$ be an upper semicontinuous and weak Morrey quasiconvex function, then $f$ is rank one quasiconvex. In particular, for $m=1$, if $f$ is upper semicontinuous and weak Morrey quasiconvex, then $f$ is level convex.

(ii) For $m=1$, if $f$ is continuous then $f$ is weak Morrey quasiconvex if and only if $f$ is level convex.

(iii) Let $f:\mathbb{R}^{m\times n}\to\mathbb{R}$ be a Borel measurable function. If $n=1$ and $f$ is weak Morrey quasiconvex then $f$ is level convex.
\end{theorem}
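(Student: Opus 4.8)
The plan is to treat the three assertions in turn, with part (i) carrying the essential content and parts (ii)--(iii) following from it or by a direct construction.

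For part (i), I would fix $\xi,\eta\in\mathbb R^{m\times n}$ with ${\rm rank}(\xi-\eta)\le 1$, fix $t\in[0,1]$, and set $\zeta:=t\xi+(1-t)\eta$; the goal is $f(\zeta)\le\max\{f(\xi),f(\eta)\}$. First I would use upper semicontinuity: given $\delta>0$, since $f$ is upper semicontinuous at $\xi$ and at $\eta$, there is $\varepsilon>0$ so that $f(z)\le\max\{f(\xi),f(\eta)\}+\delta$ whenever ${\rm dist}(z,\{\xi,\eta\})\le\varepsilon$. Next I would apply Lemma \ref{key-lemma} on $\Omega=Q$ to the affine map $\varphi$ with $\nabla\varphi\equiv\zeta$, obtaining $u\in Aff_{piec}(\overline Q;\mathbb R^m)$ with $u=\varphi$ on $\partial Q$ and ${\rm dist}(\nabla u(x),\{\xi,\eta\})\le\varepsilon$ for a.e.\ $x\in Q$. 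Setting $\psi:=u-\varphi\in W^{1,\infty}_0(Q;\mathbb R^m)$, weak Morrey quasiconvexity (Definition \ref{generalized notions}(ii)) gives
$$
f(\zeta)\le\operatorname*{ess\,sup}_{x\in Q}f(\zeta+\nabla\psi(x))=\operatorname*{ess\,sup}_{x\in Q}f(\nabla u(x))\le\max\{f(\xi),f(\eta)\}+\delta,
$$
where the final inequality combines the gradient estimate with the choice of $\varepsilon$. Letting $\delta\to 0$ yields rank one quasiconvexity. The ``in particular'' clause is then immediate, since for $m=1$ every difference $\xi-\eta$ is a row vector, so ${\rm rank}(\xi-\eta)\le 1$ always holds and rank one quasiconvexity coincides with level convexity.

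Part (ii) splits into two implications. The reverse one (level convex $\Rightarrow$ weak Morrey quasiconvex) requires only Borel measurability, which follows from continuity: for $\psi\in W^{1,\infty}_0(Q;\mathbb R^m)$ one has $\frac1{|Q|}\int_Q(\xi+\nabla\psi)\,dx=\xi$ because $\int_Q\nabla\psi=0$, so the supremal Jensen inequality of Theorem \ref{Jensensupremalscalar} gives $f(\xi)\le\operatorname*{ess\,sup}_{x\in Q}f(\xi+\nabla\psi(x))$, which is exactly \eqref{wMqcx}. The forward implication is precisely the case $m=1$ of part (i), since a continuous function is in particular upper semicontinuous.

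For part (iii) I would dispense with upper semicontinuity and solve the relevant differential inclusion exactly, which is possible because $n=1$ forces $Q=(0,1)$. Given $\xi,\eta\in\mathbb R^m$ and $t\in[0,1]$, put $\zeta:=t\xi+(1-t)\eta$ and let $\psi\in W^{1,\infty}_0((0,1);\mathbb R^m)$ be the piecewise affine function with $\psi'=\xi-\zeta$ on $(0,t)$ and $\psi'=\eta-\zeta$ on $(t,1)$; then $\psi(0)=0$ and $\psi(1)=t(\xi-\zeta)+(1-t)(\eta-\zeta)=0$, so $\psi$ is admissible, and $\zeta+\psi'$ takes only the values $\xi$ and $\eta$. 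Weak Morrey quasiconvexity then gives $f(\zeta)\le\operatorname*{ess\,sup}_x f(\zeta+\psi'(x))=\max\{f(\xi),f(\eta)\}$, i.e.\ level convexity. The main obstacle, and the reason part (i) needs upper semicontinuity while part (iii) does not, is that the M\"uller--\v{S}ver\'ak lemma only produces gradients lying within $\varepsilon$ of $\{\xi,\eta\}$ rather than exactly in $\{\xi,\eta\}$; upper semicontinuity is exactly what lets the values $f(\nabla u(x))$ on these nearby gradients be controlled by $\max\{f(\xi),f(\eta)\}$ up to an error $\delta$. When $n=1$ the inclusion is solved on the nose, so neither the approximation nor any semicontinuity assumption is required.
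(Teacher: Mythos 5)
Your proposal is correct and follows essentially the same route as the paper: part (i) via the M\"uller--\v{S}ver\'ak approximation lemma combined with upper semicontinuity and a $\delta\to 0$ limit, part (ii) by specializing (i) and invoking the supremal Jensen inequality of Theorem \ref{Jensensupremalscalar}, and part (iii) by the same explicit piecewise affine competitor (your $\psi$ is exactly the paper's $\varphi$, described through its derivative rather than its values). The only cosmetic difference is that you phrase the gradient inclusion in (i) as $\mathrm{dist}(\nabla u,\{\xi,\eta\})\le\varepsilon$ with $u=\varphi+\psi$, while the paper writes the equivalent condition $\mathrm{dist}(D\psi,\{(1-t)(\xi-\eta),-t(\xi-\eta)\})\le\varepsilon$.
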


\begin{remark}
Clearly, if a function $f:\mathbb{R}^{m\times n}\longrightarrow\mathbb{R}$ satisfies the supremal Jensen's inequality then it is weak Morrey quasiconvex. The converse being true in the scalar case $n=1$, that is when $\Omega$ is an interval. This follows from the present theorem combined with Theorem \ref{Jensensupremalscalar}.
\end{remark}

\begin{proof}
Once the first assertion of the theorem is proved, the remainder of condition (i) and condition (ii) follow immediately since evidently rank one quasiconvexity reduces to level convexity if $m=1$ and since, level convex functions satisfy the supremal Jensen's inequality (cf. Theorem \ref{Jensensupremalscalar}).

We prove that weak Morrey quasiconvexity implies Morrey rank one quasiconvexity for upper semicontinuous functions. Let $\xi,\eta\in\mathbb{R}^{m\times n}$ be such that ${\rm rank}(\xi-\eta)=1$ and let $t\in(0,1)$. We want to show that $f(t\xi+(1-t)\eta)\le \max\{f(\xi),f(\eta)\}$. Fix $\delta>0$. By the upper semicontinuity of $f$, there exists $\varepsilon>0$ such that $$|X-\xi|\le\varepsilon,\ |Y-\eta|\le\varepsilon\ \Rightarrow\ f(X)\le \delta+f(\xi),\ f(Y)\le \delta+f(\eta).$$
Applying Lemma \ref{key-lemma}, we get $\psi\in W_0^{1,\infty}(Q,\mathbb{R}^m)$ such that
$$\mathrm{dist}(D\psi(x),\{(1-t)(\xi-\eta),-t(\xi-\eta)\})\le \varepsilon,\ \text{a.e. }x\in Q.$$
Using the weak Morrey quasiconvexity and the upper semicontinuity of $f$, we get
$$f(t\xi+(1-t)\eta)\le\operatorname*{ess\,sup}_{x\in Q}f(t\xi+(1-t)\eta+D\psi(x))\le\delta+\max\{f(\xi),f(\eta)\}.$$
The result is now achieved letting $\delta$ go to zero.

It remains to prove condition (iii). Let $f:\mathbb R^{m\times 1} \to \mathbb R$ be a Borel measurable and weak Morrey quasiconvex function, let $\xi,\eta \in \mathbb R^{m\times 1}$, and let $t \in [0,1]$ be arbitrary. Define $$\varphi(x)=\left\{
\begin{array}{ll}(1-t) (\xi-\eta) x &\hbox{ if }0\leq x \leq t,\vspace{0.2cm}\\
t(\xi-\eta)(1-x) &\hbox{ if } t \leq x \leq 1.
\end{array}
\right.$$
Clearly $\varphi \in W^{1,\infty}_0((0,1);\mathbb R^m)$ and applying \eqref{wMqcx} one gets
$ f(t \xi+ (1-t)\eta)\leq \max\{f(\xi), f(\eta)\}$, which gives the level convexity of $f$.
\end{proof}



\section*{Acknowledgements}
The authors are indebted to Giuliano Gargiulo for the discussions on the subject of the paper and  to Professor Luigi Grippo for the suggestions of references, in particular \cite{Danao} and \cite{Mangasarian}.

The work of Ana Margarida Ribeiro was partially supported by the Funda\c{c}\~{a}o para a Ci\^{e}ncia e a Tecnologia (Portuguese Foundation for Science and Technology) through PEst-OE/MAT/UI0297/2011 (CMA), UTA-CMU/MAT/0005/2009 and PTDC/MAT109973/2009.

The work of Elvira Zappale was partially supported by GNAMPA project 2013 `Funzionali supremali: esitenza di minimi e condizioni di semicontinuit\'a nel caso vettoriale', by the Funda\c{c}\~{a}o para a Ci\^{e}ncia e a Tecnologia (Portuguese Foundation for Science and Technology) through UTA-CMU/MAT/0005/2009 and PTDC/MAT109973/2009.

\end{document}